\documentclass[12pt,a4paper]{amsart}
\usepackage{a4wide}
\usepackage{amsmath, amssymb, amsfonts,enumerate}
\usepackage[all]{xy}
\usepackage{amscd}
\usepackage{comment}
\usepackage{mathtools}
\usepackage{tikz} 
\usepackage{hyperref}

\newtheorem{theorem}{Theorem}[section]
\newtheorem{lemma}[theorem]{Lemma}
\newtheorem{corollary}[theorem]{Corollary}
\newtheorem{proposition}[theorem]{Proposition}

 \theoremstyle{definition}
 \newtheorem{definition}[theorem]{Definition}
 \newtheorem{remark}[theorem]{Remark}

 \newtheorem{example}[theorem]{Example}

\newtheorem{question}{Question}

\newtheorem*{problem}{Problem}

\numberwithin{equation}{section}
\newcommand {\N}{\mathbb{N}} 
\newcommand {\Z}{\mathbb{Z}} 

\newcommand{\CC}{\mathcal{C}}

\newcommand{\KK}{\mathcal{K}}

\newcommand{\OO}{\mathcal{O}}

\DeclareMathOperator{\lcm}{lcm}
\DeclareMathOperator{\Per}{Per}

\DeclareMathOperator{\NW}{NW}

\DeclareMathOperator{\End}{End}

\DeclareMathOperator{\Id}{Id}

\DeclareMathOperator{\pref}{pref}
\DeclareMathOperator{\suff}{suff}
\DeclareMathOperator{\CBrank}{rk_{\scriptstyle  \text{\rm CB}}}

\begin{document}
\title{On surjunctive and injunctive subshifts of finite type}
\author{Tullio Ceccherini-Silberstein}
\address{Dipartimento di Ingegneria, Universit\`a del Sannio, I-82100 Benevento, Italy}
\address{Istituto Nazionale di Alta Matematica ``Francesco Severi'', I-00185 Rome, Italy}
\email{tullio.cs@sbai.uniroma1.it}
\author{Michel Coornaert}
\address{Universit\'e de Strasbourg, CNRS, IRMA UMR 7501, F-67000 Strasbourg, France}
\email{michel.coornaert@math.unistra.fr}
\author{Ville Salo}
\address{Department of Mathematics and Statistics, University of Turku, 20014 Turku, Finland}
\email{vosalo@utu.fi}\subjclass[2020]{37B10, 37B15, 37B40}
\keywords{Subshift of finite type, surjunctivity, injunctivity, homoclinicity, Moore property, Myhill property, Cantor-Bendixson decomposition, Cantor-Bendixson rank}
\begin{abstract}
A dynamical system is said to be surjunctive  if every injective  endomorphism of the system is surjective and
it is said to be injunctive if every surjective endomorphism is injective. 
An endomorphism of a dynamical system is called pre-injective if its restriction to every homoclinicity class of the phase space is injective. 
One says that a dynamical system has the Moore property if every surjective endomorphism of the system is pre-injective
and that it has the Myhill property if every pre-injective endomorphism is surjective. 
We give characterisations of surjunctivity and injunctivity for  $\Z$-subshifts of finite type in terms of their irreducible components
and their Cantor-Bendixson decomposition.
We also prove that a $\Z$-subshift of finite type is surjunctive if and only if it has the Moore property
and that every injunctive $\Z$-subshift of finite type is surjunctive.  
This implies in particular that a $\Z$-subshift of finite type has the Moore property whenever it  has the Myhill property.
\end{abstract}
\date{\today}
\maketitle

\tableofcontents

\section{Introduction}

A self-map of a finite set is injective if and only if it is surjective. 
In a concrete category, one says that an object is \emph{surjunctive} if every injective endomorphism is surjective~\cite{csc-cat}, and \emph{injunctive} if every surjective endomorphism is injective. 
These are natural finiteness properties of objects, and besides the category of finite sets, they hold in several other settings.
For instance, in the category of vector spaces over a field, with linear maps as morphisms, every finite-dimensional object is surjunctive and injunctive.
The Ax-Grothendieck theorem~\cite{ax-elementary} implies surjunctivity for affine varieties over an algebraically closed field, with polynomials as 
morphisms.
Surjunctivity plays an interesting role in the theory of cellular automata. 
It is known that for a large class of groups $G$, called \emph{sofic groups}, for any finite alphabet $A$, all injective cellular automata $f \colon A^G \to A^G$ (continuous maps commuting with the shift action of $G$) are 
surjective~\cite{gromov-esav}, \cite{weiss-sgds}.  
This can be stated by saying that full shifts over sofic groups are surjunctive in the category of subshifts.
This result has been generalized in several ways. In particular, it is known that  non-wandering Smale systems are 
surjunctive~\cite{csc-goe-smale}.
\par
In the present paper, we fully characterize surjunctivity and injunctivity in the category of subshifts of finite type over the group $\Z$
in terms of the poset structure of their irreducible components and  their
Cantor-Bendixson decomposition.
It turns out that the surjunctive objects in this category are exactly those which satisfy the Moore property (which is a certain weakening of the injunctivity property).
 In addition,   the Myhill property (which is a certain strengthening of the surjunctivity property)
 implies the Moore property. 
\par
We now proceed to more precise definitions and statements.
Let $A$ be a finite set. 
Consider the set $A^{\Z}$ consisting of all sequences $x = (x_i)_{i \in \Z}$ such that  $x_i \in A$ for all $i \in \Z$.
Equip $A^{\Z}$ with its \emph{prodiscrete topology}, i.e., the product topology obtained by taking the discrete topology on every factor $A$ of $A^{\Z} = \prod_\Z A$.
The \emph{shift map} on $A^{\Z}$ is the homeomorphism $\sigma \colon A^{\Z} \to A^{\Z}$ defined by $(x_i)_{i \in \Z} \mapsto (x_{i + 1})_{i \in \Z}$.
A closed $\sigma$-invariant subset $X \subset A^{\Z}$ is called a \emph{subshift}.
A subshift $X \subset A^{\Z}$ is said to be of \emph{finite type} if there exist an integer $n \geq 0$ and a subset $S \subset A^n$ such that $X$ consists of all $x \in A^{\Z}$ 
that satisfy $(x_{i + 1}, x_{i + 2}, \dots ,x_{i + n}) \in S$ for all $i \in \Z$.
\par
Let $X \subset A^{\Z}$ be a subshift.
An \emph{endomorphism} of  $X$
is a continuous  map $\tau \colon X \to X$ which commutes with the shift map $\sigma$.
These morphisms are also called \emph{sliding-block codes}~\cite{lind-marcus-second} or
\emph{one-dimensional cellular automata}~\cite{csc-cag2, csc-ecag}.
\par
One says that the  subshift $X$ is \emph{surjunctive}~\cite{gottschalk} if every injective endomorphism of $X$ is surjective
and that it is  \emph{injunctive}, or \emph{coalescent} (cf.~\cite{auslander-endomorphisms}, \cite{downarowicz}), if every surjective endomorphism of $X$ is injective.
Two sequences  $x, y \in X$ are called \emph{homoclinic} (or \emph{almost equal}) 
if  the set of $i \in \Z$ such that $x_i \not= y_i$ is finite. 
Homoclinicity is an equivalence relation on $X$.
One says that an endomorphism of $X$ is \emph{pre-injective}~\cite{gromov-esav} if its restriction to every homoclinicity class of $X$ is injective.
Injectivity trivially implies pre-injectivity but the converse implication does not hold in general.
One says that $X$  has the \emph{Moore property}, or that $X$ is \emph{Moore}~\cite{moore}, if every surjective endomorphism of $X$   is pre-injective. One says that
$X$  has the \emph{Myhill property}, or that $X$ is \emph{Myhill}~\cite{myhill},  if every pre-injective endomorphism of $X$   is surjective.
Finally, one says that $X$  satisfies the \emph{Garden of Eden theorem} if $X$ is both Moore and Myhill.
\par
Subshifts over $\Z$ form a concrete category where the morphisms are the continuous shift-commuting maps. 
We note that in this category, surjectivity and injectivity correspond exactly to the abstract properties of epicness and monicness (see \cite[Table~1]{salo-torma_2015}, 
where $K2$ is the category of $\Z$-subshifts of finite type), so surjunctivity (resp., injunctivity) can equivalently be stated as the property that monic endomorphisms are epic (resp., epic endomorphisms are monic).
\par
In the present paper, we show that   surjunctivity is equivalent to the Moore property for subshifts of finite type.
We prove it by showing that each of these properties is equivalent to the same condition   
on  the poset structure of the irreducible components of the subshift (see Section~\ref{sec:irred-compo}  for precise definitions).
Actually, we shall establish the following.

\begin{theorem}
\label{t:char-surj-sft}
Let $A$ be a finite set and let $X \subset A^{\Z}$ be a subshift of finite type.
Then the following conditions are equivalent:
\begin{enumerate}[\rm (a)]
\item 
$X$ is surjunctive;
\item
$X$ is Moore;
\item 
every irreducible component of $X$ is extremal and
every infinite irreducible component of $X$ is isolated;
\item
$X$   has Cantor-Bendixson rank at most $2$
and the perfect kernel  of $X$ is the union of the isolated infinite irreducible components of $X$;
\item
$X$ is the disjoint union of a non-wandering subshift of finite type and of a countable subshift of finite type having Cantor-Bendixson rank at most $2$.
\end{enumerate}
\end{theorem}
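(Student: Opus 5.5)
Our plan is to take (c) as the central condition. We prove (a)$\Rightarrow$(c), (b)$\Rightarrow$(c), (c)$\Rightarrow$(a) and (c)$\Rightarrow$(b), and then show separately that (c), (d) and (e) are equivalent. We use the graph-theoretic picture throughout. We recode $X$ as a vertex shift of a finite directed graph. The irreducible components of $X$ are then the subshifts supported on the strongly connected components of the graph, partially ordered by reachability. \emph{Extremal} means maximal or minimal in this poset, and \emph{isolated} means not comparable to any other component.

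For (a)$\Rightarrow$(c) and (b)$\Rightarrow$(c) we argue by contraposition and build explicit bad endomorphisms. There are two cases.
\begin{enumerate}[\rm (i)]
\item Some component $Y$ is not extremal, so there is a chain $Y_1<Y<Y_2$.
\item Some infinite component $Y$ is comparable to another component $Z$, say $Z<Y$ (the case $Y<Z$ is symmetric).
\end{enumerate}

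In case (ii), $Y$ is an infinite irreducible SFT, so it has positive entropy and contains many distinct long blocks. We then build a sliding-block code that acts as the identity away from a marker region and uses a ``conveyor belt'' inside $Y$. It pushes the point where a transition $Z\to Y$ happens (or its position) one step further into $Y$. This shift-like movement along transition points is injective but misses the points that enter $Y$ too early, so $X$ is not surjunctive. A dual construction gives a surjective map that collapses two homoclinic points differing only in where the transition occurs, so $X$ is not Moore.

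Case (i) is handled similarly with the finite chain $Y_1\to Y\to Y_2$. Points passing through $Y$ have a ``dwell time'' in $Y$. If $Y$ is a periodic orbit, we shift that dwell time: lengthening it gives an injective non-surjective map, and shortening it (merging) gives a surjective map that is not pre-injective. When $Y$ is infinite, case (ii) already applies.

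The hard part here is making these maps well defined and continuous on all of $X$. They must be local rules that commute with $\sigma$ and respect the forbidden words. We expect the main obstacle to be the careful marker-based construction, in particular handling points with infinitely many or unbounded transitions. Since each component is visited at most once along a point, a point has at most finitely many transitions, but their spacing is unbounded, so locality must be checked with care.

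For (c)$\Rightarrow$(a) and (c)$\Rightarrow$(b), condition (c) splits $X$ as a disjoint union $X=X_1\sqcup X_0$. Here $X_1$ is a finite union of isolated infinite irreducible SFTs, and $X_0$ is countable: its components are periodic orbits forming a poset of height at most $2$, so every point is periodic or a single heteroclinic connection between two orbits.

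Any endomorphism maps each infinite irreducible piece into a piece of positive entropy, so it maps $X_1$ into $X_1$ and $X_0$ into $X_0$. For the non-wandering part $X_1$ we invoke the known results that irreducible SFTs are surjunctive and satisfy the Garden of Eden theorem, applied to the permutation of components induced by the map.

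For the countable part $X_0$ we do a direct counting argument. For each period $n$ the set of periodic points is finite and invariant, so an injective map permutes it. The connecting points between two given orbits form finitely many classes modulo shift, indexed by the finitely many possible transition phases. Injectivity then forces a bijection on each such finite set, which gives surjectivity. For Moore, a surjective map is bijective on these finite sets, and homoclinic pairs lie within the same finite class, so the map is pre-injective.

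Finally, (c)$\Leftrightarrow$(d)$\Leftrightarrow$(e) is a Cantor--Bendixson computation. The isolated points of $X$ are exactly the connecting orbits in $X_0$ between extremal finite components. After removing them, the periodic orbits become isolated, giving rank at most $2$ with perfect kernel $X_1$. Conversely, a rank bound together with the kernel description forces non-extremal chains and non-isolated infinite components to be absent, since either would produce points of higher rank or kernel points outside the isolated infinite components. Statement (e) is a restatement of (d): a non-wandering SFT is a disjoint union of irreducible ones.
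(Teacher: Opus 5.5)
Your architecture matches the paper's. You use (c) as the hub, build bad endomorphisms for (a)$\Rightarrow$(c) and (b)$\Rightarrow$(c), use finiteness of the orbit sets $\OO_{C,C'}$ plus surjunctivity and the Garden of Eden theorem for irreducible SFTs for the converses, and do a Cantor--Bendixson computation for (d) and (e). Your converse directions are essentially the paper's. The gap is that the two necessity directions, which carry the real content of the theorem, are not proved: the endomorphisms are only sketched, and you yourself flag their construction as the main obstacle.

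What must be supplied is a rule that is local, commutes with $\sigma$, and is defined on \emph{every} configuration, including those that never reach the component or pass through several components. Your sketch does not say what the map does there (identity, or a translation?), and these choices must fit together. For non-surjunctivity the paper's $\tau_C$ (Section~\ref{sec:construction-tau-C}) handles this as follows:
\begin{itemize}
\item it splits the vertices into $V_1$ (strictly below the path-component $K$ of $C$) and $V_2$;
\item it translates $V_1$-portions left and $V_2$-portions right by a \emph{common} $\ell$, an $\lcm$ of cycle lengths, so every recurrent vertex carries a preferred cycle of length exactly $\ell$;
\item it inserts preferred cycles at the $V_1/V_2$ interface.
\end{itemize}
Non-surjectivity is then obtained indirectly. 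Since $\tau_C$ preserves chains of path-components, the set $W$ of configurations with a fixed chain through $K$ satisfies $\tau_C^{-1}(W)\subset W$, so surjectivity would force $\tau_C(W)=W$. For non-extremal $C$ this is contradicted because the minimal dwell time in $C$ increases by $\ell$. For an infinite non-isolated extremal $C$, iteration forces every tail in $C$ to be the $\ell$-periodic preferred pattern, which is impossible since $C$ has periodic points of period $>\ell$.

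For non-Moore, your ``dual construction'' hides the hardest step. Suppose the map deletes material near the transition and translates the rest. A bounded window deep inside the translated part cannot see the transition, so the number of deleted symbols must be a constant $\ell$, and what to delete must be found in a bounded window. An arbitrary long path in the component need not contain a cycle of the required length at a canonical place. The paper's $\tau^C$ (Section~\ref{sec:construction-tauC}) therefore needs Lemma~\ref{l:K-n-N-ell}: every path of length $N$ contains disjoint cycles of a common length $dn$ with total length exactly $\ell$. It also needs the canonical choice of Remark~\ref{r:unique-admissible-pair}. Surjectivity comes from re-inserting $\ell/n$ copies of a length-$n$ cycle. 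The collapse of two homoclinic points comes from having two \emph{distinct} cycles of length $n$ at each vertex (Lemma~\ref{l:2-cycles-de-long-n}).

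Likewise, ``shortening the dwell time'' in a finite middle component is not surjective unless you say where the removed period goes. Lemma~\ref{l:finite-compo-not-moore} uses an equal-length substitution $\pi_0\mapsto\pi_1$ that moves one period into a finite \emph{minimal} component below; that component is finite once infinite non-isolated components are excluded. Surjectivity rests on the rigidity of that finite extremal component (Corollary~\ref{c:int-x-finite-irred-compo}).

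Two smaller points:
\begin{itemize}
\item An endomorphism need not send an infinite component into a positive-entropy one; for instance, it can collapse a full-shift component onto a fixed point. What you need, and what holds for injective or surjective $\tau$, is that $\rho_\tau$ permutes the components with $\tau(C)=\rho_\tau(C)$ (Propositions~\ref{p:surj-tau-implies-surj-rho} and~\ref{p:inj-tau-implies-inj-rho}).
\item (e) is not a mere restatement of (d). For (e)$\Rightarrow$(d) you need that a non-isolated infinite component would place an uncountable set $X_{C,C'}$ (or $X_{C',C}$) inside the countable part. For (d)$\Rightarrow$(e) you need that $X\setminus P$ is of finite type.
\end{itemize}
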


 Since injectivity implies pre-injectivity,
every subshift having the Myhill property is surjunctive.
Thus, as an immediate consequence of Theorem~\ref{t:char-surj-sft}, we get the following.

\begin{corollary}
\label{c:main-sft}
Let $A$ be a finite set and let $X \subset A^{\Z}$ be a subshift of finite type.
If $X$ has the Myhill property then it has the Moore property.
In other words, if $X$ has the Myhill property then it satisfies the Garden of Eden theorem.
\end{corollary}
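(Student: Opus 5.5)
The plan is to derive Corollary~\ref{c:main-sft} directly from Theorem~\ref{t:char-surj-sft}, with essentially no new work. The only ingredient besides the theorem is the elementary observation that the Myhill property implies surjunctivity.

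First I would show that Myhill implies surjunctive, and this holds for an arbitrary subshift. Suppose $X$ has the Myhill property and let $\tau \colon X \to X$ be an injective endomorphism. Injectivity on all of $X$ implies injectivity on each homoclinicity class, so $\tau$ is pre-injective. The Myhill property then gives that $\tau$ is surjective. Since $\tau$ was an arbitrary injective endomorphism, $X$ is surjunctive.

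Next I would invoke the implication (a)$\Rightarrow$(b) of Theorem~\ref{t:char-surj-sft}, using that $X$ is of finite type. It says that a surjunctive subshift of finite type has the Moore property, so $X$ is Moore.

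Combining the two properties, $X$ is both Moore and Myhill, which by definition means it satisfies the Garden of Eden theorem. This establishes both formulations in the corollary. No step is a genuine obstacle here: all the difficulty is absorbed into the implication (a)$\Rightarrow$(b) of Theorem~\ref{t:char-surj-sft}, which is taken as given at this point. The only thing to check is that the finite type hypothesis is available, so that the theorem applies.
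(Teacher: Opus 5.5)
Your proposal is correct and matches the paper's argument exactly. You note that injective endomorphisms are pre-injective, so Myhill implies surjunctive, and then apply the implication (a)$\Rightarrow$(b) of Theorem~\ref{t:char-surj-sft} to get the Moore property.
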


The hypothesis that the subshift $X$ is of finite type cannot be removed neither from Theorem~\ref {t:char-surj-sft} nor from Corollary~\ref{c:main-sft}.
Indeed, Fiorenzi proved in ~\cite{fiorenzi-sofic} that the even subshift 
(i.e., the subshift  consisting of all sequences of $0$s and $1$s satisfying the condition that there is always  an even number of $0$s between two $1$s)
is Myhill but not Moore.
Note that the even subshift is a sofic subshift (i.e., a factor of a subshift of finite type) and topologically mixing.
 \par
 We shall also establish the following result.

\begin{theorem}
\label{t:finite-extremal-components}
Let $A$ be a finite set and let $X \subset A^{\Z}$ be a subshift of finite type.
Then the following conditions are equivalent:
\begin{enumerate}[\rm (a)]
\item
 $X$ is injunctive;
 \item
every irreducible component of $X$ is finite and extremal;
\item
$X$ is countable with Cantor-Bendixson rank at most $2$.
\end{enumerate}
\end{theorem}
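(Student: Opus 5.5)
We prove (a)$\Rightarrow$(b)$\Rightarrow$(c)$\Rightarrow$(a) using the structure theory of irreducible components from Section~\ref{sec:irred-compo} and Theorem~\ref{t:char-surj-sft}. Recall the setup: $X$ has finitely many irreducible components $X_1,\dots,X_k$. They are partially ordered by $X_i \preceq X_j$ when there is a point of $X$ that is left asymptotic to $X_i$ and right asymptotic to $X_j$. Every point of $X$ is either in some component or is such a connecting orbit. Each component is either finite (a single periodic orbit) or has positive entropy.

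\textbf{(b)$\Leftrightarrow$(c).} Countability of $X$ is equivalent to every component being finite, because an infinite irreducible SFT is uncountable. For a countable SFT, all components are periodic orbits, so the Cantor–Bendixson derivative $X'$ consists of the points lying on connecting orbits that are themselves limits of other connecting orbits. Such points exist exactly when there is a chain $X_i \prec X_j \prec X_l$ of length $\geq 3$ in the poset. Indeed, concatenating connecting orbits through an intermediate component with longer and longer periodic stretches gives non-isolated connecting points. I take this to be exactly the characterisation of extremality: a component is \emph{extremal} if it is minimal or maximal, and every component being extremal means there are no chains of length three. So (b)$\Leftrightarrow$(c) is the combination of Theorem~\ref{t:char-surj-sft}(c)$\Leftrightarrow$(d) with the observation that the perfect kernel is empty if and only if there are no infinite components. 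Here (d) with an empty perfect kernel is exactly ``countable with rank at most $2$''.

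\textbf{(a)$\Rightarrow$(b).} I argue by contrapositive and build a surjective but non-injective endomorphism in each failing case.

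First suppose some component $Y$ is infinite. Being irreducible of positive entropy, $Y$ has a surjective non-injective endomorphism. For example, $Y$ contains two distinct periodic points of the same period, and using a marker/period construction one gets a surjective endomorphism of $Y$ collapsing something. More simply, if $Y$ is mixing one can use a Boyle-type lemma. The fallback is a sufficiently high power map $\sigma^p$ composed with a map that collapses a periodic orbit. This map must then be extended to all of $X$ as identity-like elsewhere, and that extension is delicate. Instead I would use Theorem~\ref{t:char-surj-sft}: injunctive implies surjunctive, and surjunctive implies Moore by the theorem already proved. So it suffices to show that an infinite component gives a surjective map that is pre-injective yet non-injective. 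Rather than that route, I would show directly that injunctive implies surjunctive. If $\tau$ is injective but not surjective, take a surjective endomorphism $\phi$ of $X$ onto itself that is non-injective. The key construction is this: on an infinite isolated component $Y$, the factor relation gives a surjective endomorphism of $Y$ that is not injective. Such a map exists because $Y$ is an irreducible SFT of positive entropy and is therefore not coalescent; the simplest instance is a map identifying two fixed points of some higher-block presentation. Since $Y$ is clopen in $X$ by isolation (Theorem~\ref{t:char-surj-sft}(c)), we define the endomorphism as this map on $Y$ and the identity elsewhere. If $Y$ is not isolated, then $X$ is not surjunctive by Theorem~\ref{t:char-surj-sft}, and I expect the same injective non-surjective construction from that proof can be reversed to give a surjective non-injective map.

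Second, suppose all components are finite but some component $P$ is non-extremal, with $Q \prec P \prec R$. Then we define a map that sends the connecting orbits from $Q$ to $P$ onto connecting orbits from $Q$ to $R$, which is again a collapse–shift construction along the chain, and arrange it to be surjective but not injective. Concretely, we collapse two distinct connecting orbits that pass through $P$ at different times. These orbits are distinguished only by how long they linger in $P$. The map shortens lingering by one period, so every orbit is still hit, while the orbits that linger exactly once and zero times are merged.

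\textbf{(c)$\Rightarrow$(a).} Let $\tau$ be a surjective endomorphism. It permutes the components in a way compatible with the order, since it maps periodic orbits to periodic orbits of dividing period and is surjective on the finite set of minimal period-$n$ points. Then it is a bijection on the finite set of periodic points. The remaining points are isolated connecting orbits between a minimal and a maximal component, and each such orbit is determined by a finite central word together with its two end orbits. Counting points homoclinic to a given pair of periodic asymptotics, up to a fixed window modulo shift, gives a finite set that $\tau$ maps surjectively to itself, hence injectively. This makes $\tau$ injective.

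\textbf{Main obstacle.} The main obstacle is the second case of (a)$\Rightarrow$(b), namely the explicit surjective non-injective map in the presence of a chain of length three, including verifying that it is a well-defined sliding block code.
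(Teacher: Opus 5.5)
There is a genuine gap in (a)$\Rightarrow$(b) when $X$ has an infinite irreducible component $Y$: you never actually produce a surjective endomorphism that fails to be injective.

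Your sentence that $Y$, being an irreducible SFT of positive entropy, ``is therefore not coalescent'' is exactly what has to be proved. The proposed instance, a map identifying two fixed points of a higher-block presentation, does not work. $Y$ has the Moore property (Garden of Eden theorem for irreducible SFTs), so any surjective endomorphism of $Y$ must be pre-injective. A naive local rule merging two periodic points typically also merges homoclinic configurations, for instance a fixed point and a finite perturbation of it, and so it cannot be surjective.

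The missing idea is the marker construction of Section~\ref{sec:markers}:
\begin{itemize}
\item Inside the path-component of an infinite component, one finds two distinct cycles $w_0 \neq w_1$ based at a common vertex with only trivial overlaps (Lemma~\ref{l:marking-exist}).
\item The sliding block code that rewrites the first block of each occurrence of $w_r\# w_s$ as $w_{r+s \bmod 2}$ is surjective: one solves the mod-$2$ recursion along each maximal run of markers.
\item It is not injective, since it sends the periodic points built from $w_0$ and $w_1$ to the same point (Lemma~\ref{l:not-injunctive}).
\end{itemize}
Because this map is defined on all of $X$ by a local rule, it needs neither isolation of $Y$ nor any extension argument.

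The other branches of your (a)$\Rightarrow$(b) need reorganising.
\begin{itemize}
\item You cannot invoke ``injunctive implies surjunctive''. In the paper this is deduced from Theorems~\ref{t:char-surj-sft} and~\ref{t:finite-extremal-components} together, so using it here is circular.
\item What is available is the trivial implication injunctive $\Rightarrow$ Moore (injective maps are pre-injective). Theorem~\ref{t:char-surj-sft}, (b)$\Rightarrow$(c), then gives at once that every component is extremal and every infinite component is isolated.
\item This removes your speculative ``reversal'' for non-isolated infinite $Y$. It also removes what you call the main obstacle. Your lingering-shortening map is essentially Lemma~\ref{l:finite-compo-not-moore}, whose surjectivity depends on the bottom component being finite and minimal so that the past is forced.
\item After this reduction only the isolated infinite case remains, and there the marker map (or some equivalent construction) is indispensable.
\end{itemize}

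Your (b)$\Leftrightarrow$(c) via Theorem~\ref{t:char-surj-sft}, (c)$\Leftrightarrow$(d), is valid. Your description of $X'$ is inaccurate, since $X'$ also contains the periodic points of non-isolated components, but the reduction does not rely on it.

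Your (c)$\Rightarrow$(a) is the paper's orbit-counting argument via Lemma~\ref{l:W-C-C'}, but three steps need filling in:
\begin{itemize}
\item Justify that $\tau$ preserves $\NW(X)$ and its complement using Proposition~\ref{p:surj-tau-implies-surj-rho}. ``Surjective on points of minimal period $n$'' is not automatic.
\item Pass to $\tau^{|\CC|!}$, or work with all wandering orbits at once, since $\tau$ maps $X_{C,C'}$ into $X_{\rho(C),\rho(C')}$ rather than into itself.
\item Note that $\tau(x)$ is wandering, hence not periodic, so $\tau(\sigma^n(x))=\tau(x)$ forces $n=0$. Without this, injectivity on orbits does not give injectivity on points.
\end{itemize}
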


From Theorem~\ref{t:char-surj-sft} and Theorem~\ref{t:finite-extremal-components}, we deduce that a subshift of finite type is surjunctive if and only if it is the disjoint union of a non-wandering subshift of finite type and of an injunctive subshift of finite type.
\par
Condition (c) in Theorem~\ref{t:finite-extremal-components}
is purely topological.
We deduce that being injunctive
is a topological invariant for subshifts of finite type.
Thus, we have the following.

\begin{corollary}
\label{c:surjunctive-top-inv}
Let $A$ and $B$ be  finite sets and let $X \subset A^{\Z}$ and $Y \subset B^{\Z}$ be subshifts of finite type.
Suppose that the topological spaces underlying $X$ and $Y$ are homeomorphic.
Then $X$ is injunctive if and only if $Y$ is injunctive.
\end{corollary}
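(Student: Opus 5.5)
The plan is to derive the corollary directly from Theorem~\ref{t:finite-extremal-components}, through the equivalence (a)$\Leftrightarrow$(c). Suppose $h \colon X \to Y$ is a homeomorphism of the underlying topological spaces, and assume that $X$ is injunctive. Applying (a)$\Rightarrow$(c) of Theorem~\ref{t:finite-extremal-components} to $X$, we learn that $X$ is countable and has Cantor-Bendixson rank at most $2$. We then transport both properties to $Y$ along $h$, and apply (c)$\Rightarrow$(a) of the same theorem to the subshift of finite type $Y$. This shows that $Y$ is injunctive. The converse implication is obtained in the same way from $h^{-1}$.

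The only thing that needs checking is that the properties in condition (c) are invariant under homeomorphism. Countability is preserved because $h$ is a bijection. For the Cantor-Bendixson rank, recall that the derived set $X'$ is the set of non-isolated points, and that the higher derived sets are defined by transfinite iteration: take the derived set at successor stages and intersections at limit stages. The rank is the least ordinal at which the sequence stabilises, with whatever normalisation convention the paper fixes in its section on the Cantor-Bendixson decomposition. A homeomorphism sends isolated points to isolated points. Hence $h(X') = Y'$, and by transfinite induction $h(X^{(\alpha)}) = Y^{(\alpha)}$ for every ordinal $\alpha$. It follows that $X$ and $Y$ have the same Cantor-Bendixson rank.

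I do not expect any real obstacle. The one point that needs care is that the rank in condition (c) is meant as a purely topological invariant of the space, with no reference to the shift map. This is presumably how the paper defines it, since the surrounding text calls condition (c) ``purely topological''. Note also that the shift maps play no role in the argument, and neither do the alphabets $A$ and $B$, which may be different.
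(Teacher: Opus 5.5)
Your proposal is correct and follows the paper's argument. The paper simply notes that condition (c) of Theorem~\ref{t:finite-extremal-components} (countable with Cantor--Bendixson rank at most $2$) is purely topological, and then deduces the corollary from the equivalence (a)$\Leftrightarrow$(c). Your transfinite-induction check that $h(X^{(\alpha)}) = Y^{(\alpha)}$ spells out the invariance that the paper takes for granted.
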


It follows from our results that  subshifts of finite type satisfy the following:
  \[
\begin{matrix}
 &  &\text{injunctive} & & \\
  &  & \Downarrow &  & \\
\text{Myhill}   &  \implies & \text{surjunctive} & \iff & \text{Moore} 
  \end{matrix}
  \]
We shall give examples showing that each of the two one-sided implications appearing in the diagram above is strict (see~Example~\ref{ex:type-2} and Example~\ref{ex:type-3}).
There exist even surjunctive (and hence Moore) subshifts of finite type that are neither Myhill nor injunctive (see Example~\ref{ex:type-4}).
\par
The paper is organized as follows.
In Section~\ref{sec:background}, we introduce the necessary terminology and collect some background material on directed graphs, dynamical systems, symbolic dynamics, and Cantor-Bendixson theory.
Section~\ref{sec:irred-compo} describes the poset $\CC$ of irreducible components of a subshift of finite type $X$.
We prove that an irreducible component of a subshift of finite type $X$ is isolated in the poset of all irreducible components of $X$ if and only if it is open in $X$.
We show that every endomorphism $\tau$ of $X$ naturally induces a poset endomorphism $\rho_\tau$ of $\CC$ and
that  $\rho_\tau$ is a poset automorphism provided that $\tau$ is surjective or injective.
We also give a characterization of isolated configurations  in terms of their initial and terminal irreducible components.
In Section~\ref{sec:markers}, we use the technique of
markers to construct endomorphisms of subshifts of finite type with an infinite irreducible component that are surjective but not injective.
In Section~\ref{sec:construction-tau-C}, given an irreducible component $C$ of a vertex subshift $X$,
we construct an injective endomorphism  $\tau_C \colon X \to X$.
The fact that it may fail to be surjective is used in Section~\ref{sec:char-surj-sft} for proving the properties of surjunctive subshifts of finite type stated in Theorem~\ref{t:char-surj-sft}.
In Section~\ref{sec:construction-tauC}, given an infinite irreducible component $C$ of a vertex subshift $X$
which is minimal in the poset of all irreducible components of $X$, we construct a surjective endomorphism $\tau^C \colon X \to X$.
The fact that it may fail to be pre-injective is used in Section~\ref{sec:char-surj-sft}
for proving the properties of Moore subshifts of finite type stated in Theorem~\ref{t:char-surj-sft}.
The proofs of Theorem~\ref{t:char-surj-sft} and Theorem~\ref{t:finite-extremal-components}
are  given in Section~\ref{sec:proofs}.
Section~\ref{sec:examples}  contains a series of examples.
Subshifts of finite type are divided into five classes
according to whether or not they satisfy surjunctivity, injunctivity, Moore, and Myhill.
We give  explicit examples for  each of these classes.
We also show that  surjunctivity (resp.~Moore, resp.~Myhill) is not  a topological invariant for subshifts of finite type and that injunctivity  is not a topological invariant for sofic subshifts.
In the final section, we have  collected some questions that remain open. 
 
\section{Preliminaries}
\label{sec:background}

In this section, we fix notation  and collect the necessary material regarding
posets, words, directed graphs,dynamical systems, symbolic dynamics, and Cantor-Bendixson theory.
Some proofs are given.
More details can be found in 
 \cite{brin-stuck}, \cite{csc-cag2}, \cite{csc-ecag}, \cite{kechris}, \cite{kitchens}, \cite{lind-marcus-second}, \cite{walters-intro-book}.  

\subsection{General terminology}
We write $\Z$ for the set of integers and $\N$ for the set of non-negative integers (thus, $0 \in \N$).
\par
Given a set  $X$, we write $|X|$ for the cardinality of $X$. 
We denote by $\Id_X$ the identity map on $X$, i.e., the map
$\Id_X \colon X \to X$ given by $\Id_X(x) \coloneqq x$ for all $x \in X$.
A \emph{partition} of $X$ is a set of pairwise disjoint non-empty subsets of $X$ whose union is $X$.
\par
Given a map $f \colon X \to Y$ between sets and subsets $X' \subset X$, $Y' \subset Y$, we define the subsets $f(X') \subset Y$ and $f^{-1}(Y') \subset X$ by
$f(X') \coloneqq \{f(x) : x \in X'\}$ and $f^{-1}(Y') \coloneqq \{x \in X : f(x) \in Y'\}$.
\par
Given maps $f \colon X \to Y$ and $g \colon Y \to Z$, we denote by $g \circ f$ the \emph{composite} of $f$ and $g$, i.e., the map
$g \circ f \colon X \to Z$ defined by $(g \circ f)(x) \coloneqq g(f(x))$ for all $x \in X$.
\par
Given a map $f \colon X \to X$, we set $f^0 \coloneqq \Id_X$ and define inductively $f^n$ for $n \in \N$ by setting $f^{n + 1} \coloneqq f \circ f^n$. 
If  $f$ is bijective, we write $f^{-1}$ for the inverse of $f$ and 
$f^{-n} \coloneqq (f^{-1})^n$ for every $n \in \N$.

  \subsection{Posets}
Let $X$ be a set and let $\preceq$ be a binary relation on $X$.
One says that $\preceq$ is \emph{reflexive} if one has $x \preceq x$ for all $x \in X$.
One says that $\preceq$ is \emph{antisymmetric} if $x \preceq y$ and $y \preceq x$ implies $x = y$ for all $x,y \in X$.
One says that $\preceq$ is \emph{transitive} if $x \preceq y$ and $y \preceq z$ imply $x \preceq z$ for all $x,y,z \in X$.
One says that $\preceq$ is a \emph{partial ordering} on $X$ if $\preceq$ is reflexive, antisymmetric, and transitive.
If $\preceq$ is a partial ordering on a set $X$, one says that $(X,\preceq)$ is a \emph{partially ordered set} or a \emph{poset}.
\par
Let $(X,\preceq)$ be a poset.
We write $x \prec y$ to mean $x \preceq y$ and $x \not= y$.
One says that an element $x \in X$ is \emph{minimal} (resp.~\emph{maximal}) if 
there exists no $y \in X$ such that $y \prec x$ 
(resp.~$x \prec y$).
One says that $x \in X$ is \emph{extremal} if $x$ is minimal or maximal.
One says that $x \in X$ is \emph{isolated} if $x$ is both minimal and maximal.
One says that  $x \in X$ is an \emph{intermediate} element if there exist $y, z \in X$  such that
$y \prec x \prec z$.
Observe that $x$ is intermediate if and only if $x$ is not extremal.
One says that $x,y \in X$ are \emph{adjacent} if either $x \prec y$ and there is no $z \in X$
such that $x \prec z \prec y$, or $y \prec x$ and there is no $z \in X$ such that
$y \prec z \prec x$.
One says that a subset $\Gamma \subset X$ is a \emph{chain} if the restriction of $\preceq$ to $\Gamma$ is a total ordering, i.e., for all $x,y \in \Gamma$, one has $x \preceq y$ or $y \preceq x$.
\par 
If $(X,\preceq)$ and $(X',\preceq')$ are posets, one says that a map $\varphi \colon X \to X'$ is \emph{order-preserving} if one has $\varphi(x) \preceq' \varphi(y)$ for all $x,y\in X$ such that  $x \preceq y$.
The \emph{category of posets} is the category whose objects are posets and whose morphisms are order-preserving maps between them. 
We shall denote by $\End(X,\preceq)$ the endomorphism monoid of a poset $(X,\preceq)$.

\subsection{Words}
Let $A$ be a set, called the \emph{alphabet}, whose elements are called the \emph{letters}. 
Denote by $A^*$ the free monoid based on  $A$.
Every element  $w \in A^*$ can be uniquely written as a word  $w = a_1a_2\cdots a_n$, 
where $n  \geq 0$  and $a_i \in A$ for all $i \in \{1\dots,n\}$.
We then write $|w| \coloneqq n$.
Denoting by $A^n$ the set of words  $w \in A^*$ such that $|w| = n$, we thus have $A^* = \bigcup_{n \geq 0} A^n$. 
The monoid operation on $A^*$ is the concatenation of words.
Thus, if $w \coloneqq a_1 \cdots a_n \in A^n$ and $w' \coloneqq a_1'\cdots a_m' \in A^{m}$,
then $w w' = a_1 \cdots a_n a_1' \cdots a_m' \in A^{n + m}$.
The identity element of $A^*$ is the \emph{empty word}, that is, the unique element of $A^0$.
Given a word $w = a_1\cdots a_n \in A^n$,  $n \geq 1$, and  an integer $k \in \{1,\dots,n\}$,
the \emph{$k$-prefix} (resp.~\emph{$k$-suffix}) of  $w$ is the word
$\pref_k(w) \coloneqq a_1  \cdots a_k \in A^k$ (resp.~$\suff_k(w) \coloneqq a_{n - k + 1} \cdots a_n \in A^k$).
The element $\pref_1(w) = a_1 \in A$ (resp.~$\suff_1(w) = a_n \in A$) is the \emph{initial} (resp.~\emph{terminal}) letter of $w$.
 \par
A \emph{right-infinite} (resp.~\emph{left-infinite}, resp.~\emph{bi-infinite}) word over $A$ is an element of $A^{\N}$ (resp.~$A^{-\N}$, resp.~$A^{\Z}$), that is,
a sequence $w = (a_n)_{n \in \N}$ (resp.~$w = (a_n)_{n \in -\N}$, resp.~$w = (a_n)_{n \in \Z}$), where $a_n \in A$ for all $n \in \N$ (resp.~$ n \in - \N$, resp.~$ n \in \Z$).
We shall also use the notation $w = a_0 a_1 a_2 \cdots$ (resp.~$w = \cdots a_{-2} a_{-1} a_0$, resp.~$w= \cdots a_{-1} .a_0 a_1 \cdots$) to denote $w$.

\subsection{Directed graphs}
A \emph{directed graph} is a pair $G = (V,E)$, where $V$ is a set and $E$ is a subset of $V^2$.
The set $V$ is called the set of \emph{vertices} of $G$ and $E$ is called its set of \emph{edges}.
A directed graph is said to be \emph{finite} if its set of vertices is finite.
\par
Let $G = (V,E)$ be a directed graph.
\par
Given a subset $W \subset V$, the pair $G_W = (W,F)$, where $F \coloneqq E \cap W^2$, is also a directed graph.
One says that $G_W$ is the \emph{subgraph induced by}  $W$.
\par 
Given vertices $v,v' \in V$ and an integer $n \geq 1$, a \emph{path} of \emph{length} $n$ from $v$ to $v'$ in $G$ is a word
$\pi = v_0v_1\cdots v_n \in V^{n + 1}$ such that $v_0 = v$, $v_n = v'$, and $v_i v_{i + 1} \in E$ for all $i \in \{0,1,\dots,n-1\}$.
Note that the length of $\pi$ is equal to $|\pi| - 1$ and that, by definition, the length of a path is always positive.
\par
Let $\pi = v_0v_1 \cdots v_m \in V^{m+1}$ be a path in $G$ of length $m$ from a vertex $v_0$ to a vertex $v_m$ and let 
$\pi' = v_mv_{m+1} \cdots v_{m+n} \in V^{n+1}$ be a path in $G$ of length $n$ from $v_m$ to a vertex $v_{m + n}$.
The \emph{composite}  of $\pi$ and $\pi'$
is the path in $G$ of length $m + n$ from $v_0$ to $v_{m + n}$ defined by   $\pi \# \pi' \coloneqq v_0v_1 \cdots v_m v_{m + 1} \cdots v_{m+n} \in V^{m+n+1}$. 
We define a relation $\preceq$ on $V$ by writing $v \preceq v'$ if there exists a path from $v$ to $v'$ in $G$.
Observe that  $\preceq$ is transitive on $V$.
Indeed, if $\pi$ is a path from $v$ to $v'$ and $\pi'$ is a path from $v'$ to $v''$, then
$\pi \# \pi'$ is a path from $v$ to $v''$.  
A path from a vertex $v$ to itself is called a \emph{cycle} based at $v$.
A cycle of length $1$ is called a \emph{self-loop}.
Note that if $\pi = v_0 v_1 \cdots v_{n-1} v_0$ is a cycle based at $v_0$ of length $n$ and $k \geq 1$ is an integer, then
 $\pi^{k\#} = (v_0 v_1 \cdots v_{n - 1})^k v_0$ is a cycle based at $v_0$ of length $kn$.
One says that a cycle $v_0v_1\cdots v_{n-1} v_0$ based at a vertex $v_0$ is \emph{simple} if the vertices
$v_0,v_1,\dots,v_{n - 1}$ are all distinct.
\par
A vertex $v \in V$ is called \emph{recurrent} if there exists a cycle (of positive length) based at $v$.
A vertex which is not recurrent is said to be \emph{transient}.
We denote by $R$ (resp.~$ T$) the set of all recurrent (resp.~transient) vertices of $G$.
Note that the restriction of $\preceq$ to $R$ is reflexive.
We introduce a relation $\sim$ on $R$ by writing $v \sim v'$ if there exist a path from $v$ to $v'$ and a path from $v'$ to $v$
(i.e., if $v \preceq v'$ and $v' \preceq v$).
It is clear that $\sim$ is an equivalence relation on $R$.
The elements of the quotient set $\KK \coloneqq R/\sim$, i.e., the equivalence classes of $\sim$ in $R$, are called the \emph{path-components} of $G$.
The relation $\preceq$ naturally induces a relation, also denoted by abuse $\preceq$, on $\KK$. 
Given $K,K' \in \KK$, we have $K \preceq K'$ if and only if $v \preceq v'$ for some (or, equivalently, all) $v \in K$ and $v' \in K'$.
Clearly, $\preceq$ is reflexive, antisymmetric, and transitive on $\KK$.
In other words, $(\KK,\preceq)$ is a poset.
One says that the graph $G$ is \emph{strongly connected} if $V \not= \varnothing$ and, for all $v,v' \in V$, there exists a path in $G$ from $v$ to $v'$.
Thus, $G$ is strongly connected if and only if $\KK = \{V\}$.

\subsection{Dynamical systems}
A \emph{dynamical system} is a pair $(X,f)$, where
$X$ is a compact metrizable space  and  
 $f \colon X \to X$ is  a homeomorphism.
 The space $X$ is called the \emph{phase space} of the dynamical system.
 When the homeomorphism $f$ is clear from the context, we shall sometimes simply write $X$ to designate the dynamical system  $(X,f)$.
\par 
 Let $(X,f)$ be a dynamical system.
 The \emph{orbit} of a point $x \in X$ is the subset $\OO(x) \subset X$ given by
 $\OO(x) \coloneqq \{f^n(x) : n \in \Z\}$.
 A subset $Y \subset X$ is said to be $f$-\emph{invariant}, or simply \emph{invariant},   if $f(Y) = Y$.
If $Y \subset X$ is an invariant subset, we denote by $f\vert_Y \colon Y \to Y$ the restriction of $f$ to 
$Y$.
When $Y \subset X$ is a closed invariant subset,  the pair $(Y,f|_Y)$ is also a dynamical system
and one says that $(Y,f|_Y)$ is a \emph{subsystem} of $(X,f)$.
We shall write such a subsystem  $(Y,f)$, or even simply $Y$,  if there is no risk of confusion.
\par
A point $x \in X$ is said to be \emph{periodic} if there exists an integer $n \geq 1$ such that $f^n(x) = x$. Such an integer $n$ is then called a \emph{period} of $x$.
The point  $x$ is periodic if and only if  $\OO(x)$ is finite.
If $x \in X$ is a periodic point, the least integer $n \geq 1$ such that $f^n(x) = x$ is called the \emph{minimal period} of $x$.
Observe that if $x \in X$ is periodic with minimal period $n$ then $\OO(x) = \{x,f(x) \dots,f^{n-1}(x)\}$ is finite with cardinality $n$.
We shall denote by $\Per(X)$  the set of all periodic points of $(X,f)$.
\par
A point $x \in X$ is said to be \emph{non-wandering} if for every neighborhood $U$ of $x$, there exists an integer $n \geq 1$ such that $U \cap f^n(U) \not= \varnothing$.
The set $\NW(X)$ of all non-wandering points of $f$ is a closed invariant subset of $X$.
Observe that $\Per(X) \subset \NW(X)$.
\par
One says that the dynamical system $(X,f)$ is \emph{non-wandering} if every point of $X$ is non-wandering, i.e., $\NW(X) = X$.
One says that $(X,f)$ is \emph{topologically mixing} if $X$ is non-empty and, for all non-empty open subsets $U,V \subset   X$, there exists an integer  $N \geq 1$ such that $U \cap f^n(V) \not= \varnothing$ for all  $n \geq N$.
One says that $(X,f)$ is \emph{irreducible}, or \emph{one-sided topologically transitive},  if $X$ is non-empty and, for all non-empty open subsets $U, V \subset X$, there exists $n \geq 1$ such that $U \cap f^n(V) \not= \varnothing$.
The dynamical system $(X,f)$ is irreducible if and only if there exists a point $x \in X$ whose forward orbit
$\OO^+(x) \coloneqq \{f^n(x) : n \geq 1\}$ is dense in $X$.
One says that $(X,f)$ is \emph{topologically transitive}   if $X$ is non-empty and, for all non-empty open subsets $U, V \subset X$, there exists $n \in \Z$ such that $U \cap f^n(V) \not= \varnothing$.
The dynamical system $(X,f)$ is topologically transitive if and only if there exists a point $x \in X$ whose  orbit
$\OO(x)$ is dense in $X$.
We have the following   implications:
  \[
\begin{matrix}
  \text{topologically mixing } &\implies &
  \text{irreducible } & \implies &
  \text{non-wandering} \\
  & & \Downarrow & & \\
  & & \text{topologically transitive} & &
  \end{matrix}
  \]
Each of these implications is strict.
It can be shown  that a dynamical system is irreducible if and only if it is non-wandering and topologically transitive
\cite[Theorem~5.10]{walters-intro-book}.
Moreover, if $X$ has no isolated points, then $(X,f)$ is irreducible if and only if it is topologically transitive
\cite[Proposition~2.2.2]{brin-stuck}.
\par
Let $(X,f)$ be a dynamical system.
Given $x \in X$,
the $\alpha$-\emph{limit set} (resp.~$\omega$-\emph{limit set}) of $x$
is the set $\alpha(x) = \alpha_{f}(x)$ (resp.~$\omega(x) = \omega_{f}(x)$) consisting of all cluster points of the set
$\{f^{-n}(x) : n \in \N\}$ (resp.~$\{f^n(x) : n \in \N\}$) in $X$.
Thus, a point $y \in X$ is in $ \alpha(x)$ (resp.~$\omega(x)$)
if and only if 
there exists a strictly increasing sequence $(n_k)_{k \in \N}$ of non-negative integers such that
the sequence $(f^{-n_k}(x))_{k \in \N}$ (resp.~$(f^{n_k}(x))_{k \in \N}$) converges to $y$. 
The sets $\alpha(x)$ and $\omega(x)$ are non-empty closed invariant subsets of $\NW(X)$
and one has
\begin{equation}
\label{e:alpha-omega-inverse}
\alpha_{f}(x) = \omega_{f^{-1}}(x)
\text{ and }
\omega_{f}(x) = \alpha_{f^{-1}}(x)
\end{equation}
for all $x \in X$.

\begin{proposition}
\label{p:alpha-omega-periodiques}
Let $(X,f)$ be a dynamical system and let $x \in \Per(X)$.
Then $\alpha(x) = \omega(x) = \OO(x)$.
In particular, $\alpha(x)$ and $\omega(x)$ are finite sets whose cardinality is the minimal period of $x$.
\end{proposition}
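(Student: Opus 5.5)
Let $n \geq 1$ be the minimal period of $x$. The plan is to show directly that every point of $\OO(x)$ lies in $\omega(x)$, that nothing else does, and then to obtain the statement for $\alpha(x)$ from \eqref{e:alpha-omega-inverse} applied to $f^{-1}$.

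\emph{Describing the orbit.} Since $f^n(x) = x$, the forward sequence $(f^m(x))_{m \in \N}$ is periodic in $m$ with period $n$. Hence its set of values is exactly $\{x, f(x), \dots, f^{n-1}(x)\}$. This set equals $\OO(x)$, because any $f^k(x)$ with $k \in \Z$ equals $f^{r}(x)$ where $r \in \{0,\dots,n-1\}$ is the residue of $k$ modulo $n$.

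\emph{The inclusion $\OO(x) \subset \omega(x)$.} Fix $r \in \{0,\dots,n-1\}$ and take the strictly increasing sequence $n_k \coloneqq r + kn$. Then $f^{n_k}(x) = f^r(x)$ for every $k$. This sequence is constant, so it converges to $f^r(x)$, which shows $f^r(x) \in \omega(x)$.

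\emph{The inclusion $\omega(x) \subset \OO(x)$.} Let $y \in \omega(x)$ and let $(n_k)$ be strictly increasing with $f^{n_k}(x) \to y$. Every term of this sequence lies in the finite set $\OO(x)$. A finite subset of a metrizable (hence Hausdorff) space is closed, so the limit $y$ also lies in $\OO(x)$. Therefore $\omega(x) = \OO(x)$.

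\emph{The $\alpha$-limit set.} The point $x$ is also periodic for the dynamical system $(X,f^{-1})$: indeed $f^{-n}(x) = x$, and the orbit of $x$ under $f^{-1}$ is the same set $\OO(x)$. By \eqref{e:alpha-omega-inverse} and the previous step applied to $f^{-1}$, we get $\alpha_f(x) = \omega_{f^{-1}}(x) = \OO(x)$.

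\emph{Cardinality.} As observed before the statement, $|\OO(x)| = n$, the minimal period of $x$.

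The argument has no real obstacle. The only points needing care are the use of the Hausdorff property, to know that finite sets are closed, and the fact that the periodic sequence $(f^m(x))_{m \in \N}$ takes each orbit value infinitely often.
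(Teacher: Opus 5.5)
Your proof is correct and follows essentially the paper's argument: constant subsequences along each residue class modulo the minimal period give $\OO(x) \subset \omega(x)$, and closedness of the finite set $\OO(x)$ gives the reverse inclusion. The differences are minor. You deduce the $\alpha$ case from the $\omega$ case via $f^{-1}$ and \eqref{e:alpha-omega-inverse}, whereas the paper writes out the sequences $n_k = -i + p(k+1)$ explicitly. Your appeal to finite subsets of a Hausdorff space being closed is also a more precise justification than the paper's ``finite and therefore discrete''.
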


\begin{proof}
Let $p$ denote the minimal period of $x$.
Then $\OO(x) = \{x, f(x), \ldots, f^{p-1}(x)\}$ and $\vert\OO(x)\vert = p$.
Let $y \in \alpha(x)$ (resp.\ $y \in \omega(x)$).
Then there exists a strictly increasing sequence $(n_k)_{k \in \N}$ of non-negative integers such that
the sequence $(f^{-n_k}(x))_{k \in \N}$ (resp.~$(f^{n_k}(x))_{k \in \N}$) converges to $y$.
As $\OO(x)$ is finite and therefore discrete, we deduce that $y \in \OO(x)$.
\par
Conversely, let $z \in \OO(x)$. Then there exists $0 \leq i \leq p-1$ such that $f^i(x) = z$.
Then the strictly increasing sequence $(n_k)_{k \in \N}$ defined by setting $n_k \coloneqq -i + p(k+1)$
(resp.~$n_k \coloneqq i + pk$) for all $k \in \N$ satisfies $f^{-n_k}(x) = z$ (resp.\ $f^{n_k}(x) = z$) for all $k \in \N$,
showing that $z \in \alpha(x)$ (resp.\ $z \in \omega(x)$).
\end{proof}
\par
Let $(X,f)$ be a dynamical system.
Let $d$ be a compatible metric on $X$.
Two points $x, y \in X$ are said to be  \emph{unstably equivalent} (resp.~\emph{stably equivalent}, resp.~\emph{homoclinic})
if  $d(f^{n}(x),f^{n}(y)) \to 0$ as $n \to -\infty$ (resp.~$n \to  \infty$, resp.~$|n| \to \infty$).
Clearly, unstable equivalence (resp.~stable equivalence, resp.~homoclinicity)  is an equivalence relation on $X$.
By compactness of $X$, 
each of these equivalence relations  does not depend on the choice of the compatible metric $d$.
Given $x \in X$,
we denote by  $W_f^u(x)$ (resp.~$W_f^s(x)$, resp.~$W_f^h(x)$) 
 the unstable (resp.~stable, resp.~homoclinic) equivalence class of $x$.
 Observe that $W_f^h(x) = W_f^u(x) \cap W_f^s(x)$ and that $f^n(W_f^u(x)) = W_f^u(f^n(x))$ (resp.~$f^n(W_f^s(x)) = W_f^s(f^n(x))$, resp.~$f^n(W_f^h(x)) = W_f^h(f^n(x))$) 
 for all $x \in X$ and $n \in \Z$.
 Also, $W^u_{f^{-1}}(x) = W^s_f(x)$ and $W^s_{f^{-1}}(x) = W^u_f(x)$. 
 In particular,  $W^h_{f^{-1}}(x) = W^h_f(x)$.

\begin{proposition}
\label{p:limit-sets-homoclinic}
Let $(X,f)$ be a dynamical system and let $x,y \in X$.
Then the following hold:
\begin{enumerate}[\rm (i)]
\item
if $x$ and $y$ are unstably equivalent then $\alpha(x) = \alpha(y)$;
\item
if $x$ and $y$ are stably equivalent then $\omega(x) = \omega(y)$;
\item
if $x$ and $y$ are homoclinic then $\alpha(x) = \alpha(y)$ and $\omega(x) = \omega(y)$.
\end{enumerate}
\end{proposition}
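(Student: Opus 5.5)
We prove (ii) directly from the definitions, deduce (i) from (ii) by passing to the inverse map, and get (iii) by combining the two.

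\emph{Part (ii).} Fix a compatible metric $d$ on $X$ and suppose $x$ and $y$ are stably equivalent, i.e., $d(f^n(x),f^n(y)) \to 0$ as $n \to \infty$. Let $z \in \omega(x)$. Then there is a strictly increasing sequence $(n_k)_{k \in \N}$ of non-negative integers with $f^{n_k}(x) \to z$. By the triangle inequality,
\[
d(f^{n_k}(y),z) \leq d(f^{n_k}(y),f^{n_k}(x)) + d(f^{n_k}(x),z).
\]
The first term on the right tends to $0$ because $n_k \to \infty$, and the second tends to $0$ by the choice of $(n_k)$. Hence $f^{n_k}(y) \to z$, so $z \in \omega(y)$. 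This shows $\omega(x) \subset \omega(y)$. Since stable equivalence is symmetric, the same argument gives the reverse inclusion, and so $\omega(x) = \omega(y)$.

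\emph{Part (i).} Apply (ii) to the dynamical system $(X,f^{-1})$. By the observation preceding the proposition, $W^s_{f^{-1}}(x) = W^u_f(x)$. So if $x$ and $y$ are unstably equivalent for $f$, they are stably equivalent for $f^{-1}$, and (ii) gives $\omega_{f^{-1}}(x) = \omega_{f^{-1}}(y)$. By \eqref{e:alpha-omega-inverse}, $\omega_{f^{-1}} = \alpha_f$, hence $\alpha(x) = \alpha(y)$. Alternatively, one can repeat the argument of (ii) verbatim with $f^{-n_k}$ in place of $f^{n_k}$.

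\emph{Part (iii).} Since $W^h_f(x) = W^u_f(x) \cap W^s_f(x)$, homoclinic points are both unstably and stably equivalent. Parts (i) and (ii) then give $\alpha(x) = \alpha(y)$ and $\omega(x) = \omega(y)$.

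There is no real obstacle. The only point needing care is that the subsequence indices $n_k$ tend to infinity, which is what allows the stable-equivalence hypothesis to be used. Independence of the choice of metric $d$ was already noted in the paper.
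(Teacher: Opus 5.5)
Your proof is correct and follows essentially the same route as the paper. Both use a triangle-inequality argument along a subsequence $n_k \to \infty$ for one limit set, handle the other limit set symmetrically, and obtain (iii) by combining the two. The only difference is the order: you prove (ii) directly and get (i) by passing to $(X,f^{-1})$ via $\alpha_f = \omega_{f^{-1}}$, whereas the paper proves (i) directly and notes that (ii) is similar.
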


\begin{proof}
Let $d$ be a compatible metric on $X$.
Suppose that $x$ and $y$ are unstably equivalent.
Let $z \in \alpha(x)$.
This means that
there exists a strictly increasing sequence $(n_k)_{k \in \N}$ of non-negative integers such that
the sequence $(f^{-n_k}(x))_{k \in \N}$ converges to $z$.
As $x$ and $y$ are unstably equivalent,
the sequence  $(d(f^{-n}(x),f^{-n}(y))_{n \in \N}$ converges to $0$.
Since
\[
d(z,f^{-n_k}(y)) \leq d(z,f^{-n_k}(x)) + d(f^{-n_k}(x),f^{-n_k}(y))
\]
by the triangle inequality, 
 we deduce that $d(z,f^{-n_k}(y))$ tends to $0$ as $k \to \infty$.
 Therefore $z \in \alpha(y)$.
 This shows that $\alpha(x) \subset  \alpha(y)$.
 By symmetry, we get $\alpha(y) \subset  \alpha(x)$, so that $\alpha(x) =  \alpha(y)$.
 This shows (i).
 \par
 The proof of (ii) is similar.
 \par
 Assertion (iii) follows from (i) and (ii) since homoclinicity implies both unstable and stable equivalence. 
\end{proof}

The \emph{category of dynamical systems} is the category whose objects consist of all dynamical systems $(X,f)$ and whose morphisms are defined as follows.
Given two dynamical systems $(X,f)$ and $(Y,g)$, a \emph{morphism} $\varphi \colon  (X,f) \to (Y,g)$ is a continuous map $\varphi \colon X \to Y$ such that $\varphi \circ f = g \circ \varphi$.
Thus, an endomorphism of  $(X,f)$ is a continuous map $\tau \colon X \to X$ which commutes with $f$, i.e., such that $\tau \circ f = f \circ \tau$.
We shall denote by $\End(X,f)$ the endomorphism monoid of a dynamical system $(X,f)$.
\par
Observe that if $\varphi \colon (X,f) \to (Y,g)$ is a morphism of dynamical systems, then $(\varphi(X),g)$ is a subsystem of $(Y,g)$.
A surjective morphism from $(X,f)$ to $(Y,g)$ is called a \emph{factor map}.
If there exists a factor map from $(X,f)$ to $(Y,g)$, one says that $(Y,g)$ is a \emph{factor} of $(X,f)$ and that $(X,f)$ is an \emph{extension} of $(Y,g)$.
One says that $(X,f)$ and $(Y,g)$ are \emph{topologically conjugate} if $(X,f)$ and $(Y,g)$ are isomorphic objects in the category of dynamical systems.
Thus, $(X,f)$ and $(Y,g)$ are topologically conjugate if and only if there exists a homeomorphism  $\varphi \colon X \to Y$ such that
$\varphi \circ f = g \circ \varphi$.

\begin{proposition}
\label{p:properties-endo}
Let $\varphi \colon (X,f) \to (Y,g)$ be a morphism of dynamical systems.
Then the following hold:
\begin{enumerate}[\rm (i)]
\item
one has $\varphi(\NW(X)) \subset \NW(Y)$;
\item
for every $x \in X$, one has
$\varphi(\alpha_f(x)) = \alpha_g(\varphi(x))$ and 
$\varphi(\omega_f(x)) = \omega_g(\varphi(x))$;
\item
for every $x \in X$, one has
$\varphi(W_f^u(x)) \subset W_g^u(\varphi(x))$, 
$\varphi(W_f^s(x)) \subset W_g^s(\varphi(x))$,
\text{ and }
$\varphi(W_f^h(x)) \subset W_g^h(\varphi(x))$.
\item
if $(X,f)$ is non-wandering (resp.~irreducible, resp.~topologically transitive, resp.~topologically mixing) then $(\varphi(X),g)$ is
non-wandering (resp.~irreducible, resp.~topologically transitive, resp.~topologically mixing).
\end{enumerate} 
\end{proposition}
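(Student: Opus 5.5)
We verify the four assertions of Proposition~\ref{p:properties-endo} separately, using only the definitions and the fact that $\varphi$ is continuous and satisfies $\varphi \circ f^n = g^n \circ \varphi$ for all $n \in \Z$. This identity holds for negative $n$ because $f$ and $g$ are homeomorphisms: from $\varphi \circ f = g \circ \varphi$ we get $g^{-1} \circ \varphi = \varphi \circ f^{-1}$.

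\emph{Assertion (i).} Let $x \in \NW(X)$ and let $V$ be a neighborhood of $\varphi(x)$. Then $U \coloneqq \varphi^{-1}(V)$ is a neighborhood of $x$, so there are $n \geq 1$ and $u \in U$ with $f^n(u) \in U$. It follows that $\varphi(u) \in V$ and $g^n(\varphi(u)) = \varphi(f^n(u)) \in V$, so $V \cap g^n(V) \neq \varnothing$ and $\varphi(x) \in \NW(Y)$.

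\emph{Assertion (ii).} This is the only step requiring compactness. For the inclusion $\varphi(\omega_f(x)) \subset \omega_g(\varphi(x))$, take $y \in \omega_f(x)$ with $f^{n_k}(x) \to y$. By continuity, $g^{n_k}(\varphi(x)) = \varphi(f^{n_k}(x)) \to \varphi(y)$, so $\varphi(y) \in \omega_g(\varphi(x))$. For the reverse inclusion, take $z \in \omega_g(\varphi(x))$ with $g^{n_k}(\varphi(x)) \to z$. By compactness of $X$, we may pass to a subsequence along which $f^{n_k}(x)$ converges to some $y$, which lies in $\omega_f(x)$. Continuity then gives $\varphi(y) = \lim \varphi(f^{n_k}(x)) = z$. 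The statement for $\alpha$ follows by applying the $\omega$ case to $f^{-1}$ and $g^{-1}$ via \eqref{e:alpha-omega-inverse}, since $\varphi$ is also a morphism $(X,f^{-1}) \to (Y,g^{-1})$.

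\emph{Assertion (iii).} We use uniform continuity of $\varphi$ on the compact metric space $X$. Fix compatible metrics $d_X$ and $d_Y$. Given $\varepsilon > 0$, choose $\delta > 0$ such that $d_X(a,b) < \delta$ implies $d_Y(\varphi(a),\varphi(b)) < \varepsilon$. If $d_X(f^n(x),f^n(x')) \to 0$ as $n \to \infty$, then for large $n$ we have $d_Y(g^n\varphi(x), g^n\varphi(x')) = d_Y(\varphi f^n(x), \varphi f^n(x')) < \varepsilon$. This gives the stable case. The unstable case is identical with $n \to -\infty$, and the homoclinic case is obtained by intersecting the two.

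\emph{Assertion (iv).} Set $Z \coloneqq \varphi(X)$, a closed $g$-invariant subset, and note that $\varphi \colon X \to Z$ is surjective and continuous.
\begin{itemize}
\item Non-wandering: this follows from (i) applied to the corestriction $\varphi \colon (X,f) \to (Z,g)$.
\item Irreducible, transitive, mixing: for non-empty open $U', V' \subset Z$, the preimages $U = \varphi^{-1}(U')$ and $V = \varphi^{-1}(V')$ are non-empty open sets by surjectivity. If $U \cap f^n(V) \neq \varnothing$, then applying $\varphi$ gives $U' \cap g^n(V') \neq \varnothing$, because $\varphi(f^n(V)) = g^n(\varphi(V)) = g^n(V')$. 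Hence the existence of one such $n \geq 1$, of one $n \in \Z$, or of all $n \geq N$ transfers from $X$ to $Z$. Non-emptiness of $Z$ is clear.
\end{itemize}

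The only mildly delicate point in the whole argument is the reverse inclusion in (ii), which needs the subsequence extraction; everything else is routine.
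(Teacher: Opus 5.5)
Your proof is correct. For (ii) and (iv) it follows the paper's argument: compactness and subsequence extraction give the reverse inclusion in (ii), and pulling back non-empty open sets along the surjection $X \to \varphi(X)$ handles (iv). The only swap is that you prove the $\omega$ case first and get the $\alpha$ case by passing to $f^{-1}$, $g^{-1}$, while the paper does it the other way round.

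The real difference is in (i) and (iii). The paper does not prove these; it refers to Propositions~2.2(iii) and~2.3 of \cite{csc-goe-smale}. You give short direct proofs instead: pulling back a neighborhood of $\varphi(x)$ for (i), and using uniform continuity of $\varphi$ on the compact space $X$ for (iii). This makes the proposition self-contained at essentially no cost.

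Your non-wandering case of (iv) is also slightly more careful than the paper's bare ``by (i)''. You apply (i) to the corestriction $\varphi \colon (X,f) \to (\varphi(X),g)$. The inclusion $\varphi(X) \subset \NW(Y)$ alone does not directly say that the subsystem $\varphi(X)$ is non-wandering for its own relative topology. You also treat topological transitivity explicitly, which the paper's proof leaves implicit.

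One minor inaccuracy in your commentary: (ii) is not the only step using compactness. The uniform continuity you invoke in (iii) also relies on compactness of $X$.
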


\begin{proof}
Assertion (i) follows, e.g.,~from Proposition~2.2.(iii)  in~\cite{csc-goe-smale}.
\par 
To prove (ii),  we first observe that  $\varphi \circ f^n = g^n \circ \varphi$ for all $n \in \Z$ since $\varphi \circ f = g \circ  \varphi$.
Let now $x \in X$. 
Suppose first that $y \in \varphi(\alpha_f(x))$.
This means that there exist $z \in X$ with $y = \varphi(z)$ and a strictly increasing sequence $(n_k)_{k \in \N}$ of non-negative integers such that 
$f^{- n_k}(x)$ converges to $z$ as $k \to \infty$.
Then $g^{-n_k}(\varphi(x)) = \varphi(f^{-n_k}(x))$ converges to $\varphi(z) = y$ by continuity of $\varphi$.
Therefore $y \in \alpha_g(\varphi(x))$.
This shows that $\varphi(\alpha_f(x)) \subset  \alpha_g(\varphi(x))$.
\par
To prove the converse inclusion, suppose now $y \in \alpha_g(\varphi(x))$. 
This means that there exists a strictly increasing sequence $(n_k)_{k \in \N}$ of non-negative integers such that
the sequence $(g^{-n_k}(\varphi(x)))_{k \in \N}$ converges to $y$.
By compactness of $X$, there exists a subsequence $(m_i)_{i \in \N}$ of the sequence $(n_k)_{k \in \N}$ such that 
the sequence $(f^{-m_i}(x))_{i \in \N}$ converges to some point $z \in X$.
We then have $z \in \alpha_f(x)$.
On the other hand,
\[
y = \lim_{i \to \infty} g^{-m_i}(\varphi(x)) 
= \lim_{i \to \infty}  \varphi(f^{-m_i}(x)) = \varphi(z)
\]
by continuity of $\varphi$.
This shows that $y \in \varphi(\alpha_f(x))$ and completes the proof that
$\varphi(\alpha_f(x)) = \alpha_g(\varphi(x))$.
\par
The proof of the second equality in 
(ii) is similar 
(it can also be deduced from the first equality applied to $f^{-1}$ and the second equality in ~\eqref{e:alpha-omega-inverse}).
\par
Assertion (iii) follows, e.g.,~from Proposition~2.3 in~\cite{csc-goe-smale}.
\par
If $(X,f)$ is non-wandering, then $(\varphi(X),g)$ is itself non-wandering by (i).
On the other hand, if $U$ and $V$ are non-empty subsets of $\varphi(X)$, then 
$U' \coloneqq \varphi^{-1}(U)$ and $V' \coloneqq \varphi^{-1}(V)$ are non-empty open subsets of $X$.
Moreover, if $U' \cap f^n(V') \not= \varnothing$ for some $n \geq 1$, then
\[
\varnothing \not= \varphi(U' \cap f^n(V')) = \varphi(\varphi^{-1}(U) \cap f^n(\varphi^{-1}(V))) = \varphi(\varphi^{-1}(U) \cap \varphi^{-1}(g^n(V))) \subset U \cap g^n(V),
\]
so that $U^n \cap g^n(V) \not= \varnothing$.
We deduce that if $(X,f)$ is irreducible (resp.~topologically mixing) then $\varphi(X),g)$ is 
irreducible (resp.~topologically mixing).
This completes the proof of (iv).
    \end{proof}

In the trivial case when the phase space $X$ of a dynamical system $(X,f)$ is finite, the topology on $X$ is discrete and we have the following straightforward characterization of irreduciblity.

\begin{proposition}
\label{p:finite-irred-ds}
Let $(X,f)$ be a dynamical system
and suppose that  $X$ is finite with cardinality $|X| =  n$.
Then the following conditions are equivalent:
\begin{enumerate}[\rm (a)]
\item
$(X,f)$ is irreducible;
\item
$(X,f)$ is topologically transitive;
\item
every point of $X$ is periodic with minimal period $n$;
\item
there exists a point of $X$ which is periodic with minimal period $n$;
\item
$(X,f)$ is topologically conjugate to the dynamical system $(Y,g)$, where $Y \coloneqq \Z/n\Z$ and $g \colon Y \to Y$ is given by $g(y) \coloneqq y + 1$ for all $y \in Y$.
\end{enumerate}
\end{proposition}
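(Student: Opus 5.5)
I will prove the equivalences cyclically: (a) $\Rightarrow$ (b) $\Rightarrow$ (c) $\Rightarrow$ (d) $\Rightarrow$ (e) $\Rightarrow$ (a). Throughout, the topology on $X$ is discrete because $X$ is finite. So a subset is open if and only if it is non-empty as a condition in the definitions, and every point is isolated.

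\emph{(a) $\Rightarrow$ (b).} This is immediate: irreducibility implies topological transitivity, as recorded in the diagram of implications above.

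\emph{(b) $\Rightarrow$ (c).}
\begin{itemize}
\item[] Take $x \in X$ with dense orbit. Since $X$ is discrete, density means $\OO(x) = X$.
\item[] The orbit $\OO(x) = \{f^k(x) : k \in \Z\}$ is a finite set, so there are integers $j < k$ with $f^j(x) = f^k(x)$. Because $f$ is bijective, this gives $f^{k-j}(x) = x$, so $x$ is periodic.
\item[] By the observation following the definition of minimal period, $|\OO(x)|$ equals the minimal period $p$ of $x$. Hence $p = |X| = n$.
\item[] Every other point has the form $y = f^i(x)$. Then $f^m(y) = y$ if and only if $f^m(x) = x$, because $f^i$ is injective. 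So $y$ also has minimal period $n$.
\end{itemize}

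\emph{(c) $\Rightarrow$ (d).} This is trivial, since $X$ is non-empty. Here I assume $n \geq 1$, which is implicit in the hypotheses because an irreducible system is non-empty by definition.

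\emph{(d) $\Rightarrow$ (e).}
\begin{itemize}
\item[] Let $x$ have minimal period $n$. Then $\OO(x) = \{x, f(x), \dots, f^{n-1}(x)\}$ has exactly $n$ elements, so $\OO(x) = X$.
\item[] Define $\varphi \colon \Z/n\Z \to X$ by $\varphi(k) \coloneqq f^k(x)$. This is well defined because $f^n(x) = x$.
\item[] The map $\varphi$ is surjective onto a set of the same cardinality $n$, hence bijective.
\item[] It is a homeomorphism because both spaces are finite and discrete.
\item[] It satisfies $\varphi(g(k)) = f^{k+1}(x) = f(\varphi(k))$, so it is a topological conjugacy.
\end{itemize}

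\emph{(e) $\Rightarrow$ (a).}
\begin{itemize}
\item[] Irreducibility is preserved under topological conjugacy, since it is defined purely in terms of open sets and iterates. So it suffices to check $(Y,g)$.
\item[] Let $U, V$ be non-empty subsets of $Y$, and pick $u \in U$ and $v \in V$.
\item[] Choose $m \in \{1, \dots, n\}$ with $m \equiv u - v \pmod n$. Then $g^m(v) = u$, so $U \cap g^m(V) \neq \varnothing$ with $m \geq 1$.
\end{itemize}

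The only step that needs care is (b) $\Rightarrow$ (c). It requires converting density into equality of sets using discreteness, and it uses the bijectivity of $f$ to show that every point, not just the chosen one, is periodic.
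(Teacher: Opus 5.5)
Your proof is correct. The paper gives no proof of this proposition; it calls it a ``straightforward characterization'' for finite, hence discrete, phase spaces. Your cyclic chain (a)$\Rightarrow$(b)$\Rightarrow$(c)$\Rightarrow$(d)$\Rightarrow$(e)$\Rightarrow$(a) is exactly the routine verification left to the reader.

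One small correction concerns the step (c)$\Rightarrow$(d). There you cannot justify $n \geq 1$ by appealing to irreducibility, because (a) is not available at that point in the chain. Instead, $n \geq 1$ is a standing hypothesis the statement itself needs. For $X = \varnothing$, condition (c) holds vacuously while (d) fails, and $\Z/0\Z$ is not even finite. So you should declare $n \geq 1$ at the outset.
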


One says that a dynamical system $(X,f)$  is \emph{surjunctive} if every injective endomorphism of $(X,f)$ is surjective
and that it is  \emph{injunctive} if every surjective endomorphism of $(X,f)$ is injective.
One says that an endomorphism $\tau$ of a dynamical system $(X,f)$ is \emph{pre-injective} if the restriction of $\tau$ to every homoclinicity class of $X$ is injective.
One says that a dynamical system $(X,f)$ has the \emph{Moore property}, or that it is \emph{Moore}, if every surjective endomorphism of $(X,f)$ is pre-injective.
One says that a dynamical system $(X,f)$ has the \emph{Myhill property}, or that it is \emph{Myhill}, if every pre-injective endomorphism of $(X,f)$ is surjective.
Since injectivity implies pre-injectivity, we always have the following implications:
\[
(\text{Myhill } \implies \text{ surjunctive}) \text{   and   } (\text{injunctive } \implies \text{ Moore}).  
\]
These implications are strict (see Example~\ref{ex:type-2} and Example~\ref{ex:type-4}).

Surjunctivity, injunctivity, and the Moore property are clearly finiteness conditions in the sense that they are satisfied by all dynamical systems with finite phase space.
By contrast, the Myhill property is not a finiteness condition (see Example~\ref{ex:type-2}).

\subsection{Shifts and subshifts}  
Let $A$ be a finite set, called the \emph{alphabet}. 
\par
Consider the set $A^{\Z}$  consisting of all bi-infinite sequences $x = (x_i)_{i \in \Z}$, where $x_i \in A$ for all $i \in \Z$.
An element of $A^{\Z}$ is also called a \emph{configuration} over the alphabet $A$.
For $x \in A^{\Z}$ and $i,j \in \Z$ such that $i \leq j$, we define the word $x_{[i,j]} \in A^{j - i + 1}$ by
$x_{[i,j]} \coloneqq x_i x_{i + 1} \cdots x_{j}$.
We also define the left-infinite  word  $x_{(-\infty,i]} \in A^{-\N}$ and the right-infinite word $x_{[i,\infty)} \in A^{\N}$ by
$x_{(-\infty,i]} \coloneqq \cdots  x_{i - 2}x_{i - 1}x_{i} $ and $x_{[i,\infty)} \coloneqq x_i x_{i + 1} x_{i + 2} \cdots $.
\par
We equip $A$ with its discrete topology and $A^{\Z}$    with the topology of pointwise convergence.
Thus,  regarding  $A^{\Z}$ as the product of a family of copies of $A$ indexed by $\Z$,
the topology on $A^{\Z}$ is the product topology.
A base for the topology on $A^{\Z}$ is formed by the \emph{cylinders}
\[
[w]_i \coloneqq \{x \in A^{\Z} : x_{[i,i + |w| - 1]} = w \},
\]
where $w$ runs over $A^*$ and $i$ over $\Z$.
Note that every cylinder is \emph{clopen} (i.e., both closed and open) in $A^{\Z}$.
\par
One says that a word $w \in A^*$ \emph{appears} in $x \in A^{\Z}$ if there exists $i \in \Z$ such that
$x \in [w]_i$.
\par
The space $A^{\Z}$ is   compact, metrizable, and totally disconnected.
A compatible metric $d$ on $A^{\Z}$ is obtained
by setting, for all $x,y \in A^{\Z}$,
\begin{equation}
\label{e:metric-on-shift}
d(x,y) \coloneqq
\begin{cases}
0 &\text{ if }x = y, \\ 
2^{-\min\{i \in \N : (x_{-i},x_i) \not= (y_{-i},y_i)\}} &\text{ if }x \not= y.
\end{cases}
\end{equation}
\par
We also equip $A^{\Z}$ with the homeomorphism 
$\sigma \colon A^{\Z} \to A^{\Z}$ defined   by
$(\sigma(x))_i  \coloneqq x_{i + 1}$ for all $x \in A^{\Z}$ and $i \in \Z$.
The homeomorphism $\sigma$ is called the \emph{shift map} and the dynamical system $(A^{\Z},\sigma)$ is called 
the \emph{full shift} over the alphabet $A$.  
\par
Two sequences $x,y \in A^{\Z}$ are unstably equivalent (resp.~stably equivalent, resp.~homoclinic) if and only if there exists
an integer $N \geq 0$ such that $x_i = y_i$ for all $i \in \Z$ with $i \leq -N$ (resp.~$i \geq N$, resp.~$|i| \geq N$). 
\par
A closed $\sigma$-invariant subset $X \subset A^{\Z}$ is called a \emph{subshift} over the alphabet $A$.
 \par
Let $X \subset A^{\Z}$ be a subshift.
The \emph{language} of  $X$ is the subset $L(X) \subset A^*$
consisting of all words $w \in A^*$ such that $w$ appears in some configuration $x \in X$. 
\par
A subshift  $X \subset A^{\Z}$ is non-wandering if and only if, for every $u \in L(X)$, there exists $v \in A^*$ such that $uvu \in L(X)$. 
A non-empty subshift  $X \subset A^{\Z}$ is irreducible  if and only if, for all $u,v \in L(X)$, there exists $w \in A^*$ such that $u w v \in L(X)$,
and it is topologically mixing if and only if, for all $u,v \in L(X)$, there exists an integer $N \geq 1$ such that, for any integer $n \geq N$, there exists a word $w \in A^n$  such that $u w v \in L(X)$.   
\par
A subset $X \subset A^{\Z}$ is a subshift if and only if there exists a subset $F \subset A^*$ such that $X$ consists of all $x \in A^{\Z}$ such that no element of $F$ appears in $x$.
Such a subset $F$ is then called a \emph{defining set of forbidden words} for $X$.
One says that a subshift $X \subset A^{\Z}$ is \emph{of finite type} if $X$ admits a finite defining set of forbidden words.
\par
Being of finite type is a topological conjugacy invariant for subshifts, i.e., if $A$, $B$ are finite sets and $X \subset A^{\Z}$, $Y \subset B^{\Z}$ are topologically conjugate subshifts, then $X$ is of finite type if and only if $Y$ is of finite type (see, e.g.,~\cite[Theorem~2.1.10]{lind-marcus-second}, \cite[Exercise~1.54]{csc-ecag}).
\par
The following result is a  form of the \emph{Curtis-Hedlund-Lyndon theorem}.

\begin{proposition}
\label{p:char-ca}
Let $A$ be a finite set and let $X \subset A^{\Z}$ be a subshift.
Let $\tau \colon X \to X$ be a map.
Then the following conditions are equivalent:
\begin{enumerate}[\rm (a)]
\item
$\tau$ is an endomorphism of  $X$;
\item
$\tau$ commutes with $\sigma$ and there exists an integer $m \geq 0$ and a map $\mu \colon A^{2m + 1} \to A$ such that
 $\tau(x)_0 = \mu(x_{[-m,m]})$ for all $x \in X$.
 \end{enumerate}
\end{proposition}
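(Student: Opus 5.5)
We prove the two implications separately, using only that $X$ is a closed $\sigma$-invariant subset of $A^{\Z}$ and that $\tau$ maps $X$ into $X$.

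For (b) $\Rightarrow$ (a), we must show that $\tau$ is continuous, since commutation with $\sigma$ is assumed. Because $\tau$ commutes with $\sigma$, we have $\tau(x)_i = (\sigma^i(\tau(x)))_0 = \tau(\sigma^i(x))_0$. Hence
\[
\tau(x)_i = \mu\bigl((\sigma^i(x))_{[-m,m]}\bigr) = \mu(x_{[i-m,i+m]})
\]
for all $i \in \Z$. Consequently, for each finite window $[-n,n]$, the word $\tau(x)_{[-n,n]}$ depends only on $x_{[-n-m,n+m]}$. The preimage under $\tau$ of a cylinder $[w]_{-n} \cap X$ is therefore a finite union of sets of the form $[u]_{-n-m} \cap X$, which are open in $X$. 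Since cylinders form a base of the topology, $\tau$ is continuous.

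For (a) $\Rightarrow$ (b), commutation is given, so it remains to produce $m$ and $\mu$. The map $x \mapsto \tau(x)_0$ is continuous from $X$ to the discrete finite set $A$, so it is locally constant. Since $X$ is compact and the cylinders $[u]_{-k} \cap X$ with $|u| = 2k+1$ form a base, we cover $X$ by finitely many such cylinders on each of which $\tau(\cdot)_0$ is constant, and we take $m$ to be the largest $k$ appearing in this cover. Refining each cylinder to length $2m+1$ centred at $0$ preserves constancy, so $\tau(x)_0$ depends only on $x_{[-m,m]}$. Equivalently, this follows from uniform continuity with respect to the metric \eqref{e:metric-on-shift}. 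Finally, we define $\mu(u) \coloneqq \tau(x)_0$ for any $x \in X$ with $x_{[-m,m]} = u$, and let $\mu(u)$ be an arbitrary letter when $u \notin L(X)$ or no such $x$ exists.

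The only step needing care is the compactness argument that gives a uniform window size $m$. It is standard, so we expect no real obstacle.
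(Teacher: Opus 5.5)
Your proposal is correct and follows essentially the same route as the paper. For (b) $\Rightarrow$ (a) you use the identity $\tau(x)_i = \tau(\sigma^i(x))_0 = \mu(x_{[i-m,i+m]})$, and for (a) $\Rightarrow$ (b) you use local constancy of $x \mapsto \tau(x)_0$ together with compactness. The differences are only cosmetic: you spell out the compactness argument that the paper compresses into ``it follows that there exists an integer $m$'', and you check continuity through preimages of cylinders rather than through continuity of each coordinate map $x \mapsto \tau(x)_i$, which is equivalent for the product topology.
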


\begin{proof}
Suppose (a). Then the map $X \to A$, $x \mapsto \tau(x)_0$, is continuous.
It follows that there exists an integer $m \geq 0$ such that
if $x,y \in X$ satisfy $x_{[-m,m]} = y_{[-m,m]}$ then $\tau(x)_0 = \tau(y)_0$.
We deduce that  there exists  a map $\mu \colon A^{2m + 1} \to A$ such that
$\tau(x)_0 = \mu(x_{[-m,m]})$ for all $x \in X$.
This shows that (a) implies (b).
\par
Conversely, suppose (b).
Let $x \in X$ and $i \in \Z$.
Since $\tau$ commutes with $\sigma$, we have
$\tau(x)_i = \tau(\sigma^i(x))_0$.
It follows that $\tau(x)_i = \mu(x_{[-m + i, m + i]})$. 
We deduce that, for every $i \in \Z$, the map $X \mapsto A$, $x \mapsto \tau(x)_i$ is continuous.
This implies  that the map $\tau \colon X \to X$ itself is continuous.
This shows that (b) implies (a).
\end{proof}

\subsection{Cantor-Bendixson decompositions and ranks}
\label{subsec:cb}
A separable topological space $X$ is said to be \emph{Polish} if $X$  admits a compatible metric $d$ such that $(X,d)$ is a complete metric space.
Every compact metrizable space is Polish (see, e.g.,~\cite[Proposition~I.4.2]{kechris}).
A subset of a Polish space is Polish (for the relative topology) if and only if it is a $G_\delta$-subset, i.e., the intersection of a countable family of open subsets
\cite[Theorem~I.3.11]{kechris}.
In particular, every open (resp.~closed) subset of a Polish space is itself Polish.
\par
Let $X$ be a Polish space.
By the \emph{Cantor-Bendixson decomposition theorem} \cite[Theorem~I.6.4]{kechris}, 
there is a unique perfect closed subset $P$ of $X$ such that $X \setminus P$ is countable.
The subset $P$ is called the \emph{perfect kernel} of $X$.
Observe that  $X$ is perfect (resp.~countable) if and only if $P = X$ (resp.~$P = \varnothing$).
\par
Let $X$ be a topological space.
Recall that one says that a point $x \in X$ is a \emph{limit point} of a subset $Y \subset X$ if every neighborhood of $x$ contains a point of $Y$ other than $x$.
The set of all limit points of $Y$ is called the \emph{derived set} of $Y$ and denoted by $Y'$.
The set  $Y$ is closed in $X$ if and only if $Y' \subset Y$. 
Note that $X \setminus X'$ is the set of isolated points of $X$.
Given an ordinal $\alpha$,  the \emph{derived set of order} $\alpha$
of $X$ is  the closed subset $X^{(\alpha)} \subset X$ defined, using transfinite induction,   
by setting  $X^{(0)} \coloneqq  X$, $X^{(\alpha + 1)} \coloneqq (X^{(\alpha)})'$ for every ordinal $\alpha$, and
$X^{(\lambda)} \coloneqq \bigcap_{\alpha < \lambda} X^{(\alpha)}$ for every limit ordinal $\lambda$.
There exists an  ordinal $\alpha_0$ such that $X^{(\alpha)} = X^{(\alpha_0)}$ for every ordinal $\alpha \geq \alpha_0$.
The smallest such $\alpha_0$ is called the \emph{Cantor-Bendixson rank} of $X$ and denoted by $\CBrank(X)$.
\par
In the case when $X$ is Polish, $\CBrank(X)$ is a countable ordinal and 
$X^{(\CBrank(X))}$ is the perfect kernel of $X$
\cite[Theorem~I.6.11]{kechris}.
A Polish space $X$ is countable if and only if $X^{(\CBrank(X))} = \varnothing$.
\par
If $X$ is a non-empty countable compact metrizable space, then $\CBrank(X)$ is a \emph{successor ordinal}, i.e., there exists an ordinal $\alpha$ such that $\CBrank(X) = \alpha + 1$.
Moreover,  $\alpha$ is the smallest ordinal such that
$X^{(\alpha)}$ is finite.
The \emph{Mazurkiewicz-Sierpinski characteristic} of a countable compact metrizable space $X$
is the pair $(\alpha,n)$, where $\alpha$ is the smallest ordinal such that $X^{(\alpha)}$ is finite  and $n \coloneqq |X^{(\alpha)}|$. 
The Mazurkiewicz-Sierpinski characteristic is  a complete topological invariant for countable compact metrizable spaces, that is,
two countable compact metrizable spaces are homeomorphic if and only if they have the same Mazurkiewicz-Sierpinski characteristic~\cite{mazurkiewicz-sierpinski}.

\section{The poset of irreducible components}
\label{sec:irred-compo}

In this section, we investigate the poset of irreducible components of a subshift of finite type.
This is a finite poset which is a kind of skeleton for the subshift.
We first describe this poset for vertex subshifts.

\subsection{Vertex subshifts}
Let $G = (V,E)$ be a finite directed graph.
The set $X \subset V^{\Z}$, consisting of all sequences $x = (x_i)_{i \in \Z}$ such that $x_i \in V$ and $x_i x_{i+1} \in E$ for all $i \in \Z$,
is a subshift of finite type over the alphabet $V$
admitting $F \coloneqq V^2 \setminus E$ as a defining set of forbidden words.
One says that $X$ is the \emph{vertex subshift} associated with $G$.
\par
If $\pi = v_0 v_1 \cdots v_{n - 1} v_0$ is a cycle of length $n$ in $G$, then the sequence  $x \in V^{\Z}$, defined by $x_i \coloneqq v_j$ for all $i \in \Z$ and $j \in \{0,1,\dots,n-1\}$ such that
$i \equiv j \mod n$, is a periodic configuration in $X$ with period $n$.
One says that $x$ is the periodic configuration \emph{associated with the cycle} $\pi$.
\par
For $W \subset V$, the vertex subshift $Y \subset W^{\Z}$ associated with the subgraph $G_W$ of $G$ induced by $W$
is called  the vertex subshift associated with $W$.
Note that $Y$ is also a subshift of finite type of $V^{\Z}$
and that $Y$ is irreducible whenever $W$ is strongly connected.
 \par 
If $A$ is a finite set and $X \subset A^{\Z}$ is a subshift of
finite type, then there exists a finite directed graph $G$ such that $X$ is topologically conjugate to the vertex subshift associated with $G$ 
(see, e.g., \cite[Theorem~2.3.2]{lind-marcus-second}).
\par
The following observation will be useful in the study of the topological properties of subshifts of finite type.

\begin{proposition}
\label{p:normal-form-irred}
Let $G = (V,E)$ be a finite directed graph and let $X \subset V^{\Z}$ denote the vertex subshift associated with $G$.
Let $\KK$ denote the poset of path-components of $G$ and let $T \subset V$ denote the set of its transient vertices.
Fix $x \in X$.
Let $\Gamma = \Gamma(x)$ denote the subset of $\KK$ consisting of all $K \in \KK$ such that there exists $i \in \Z$ for which $x_i \in K$.
Then $\Gamma$ is a non-empty chain of $\KK$.
Moreover, writing $\Gamma = \{K_0,K_1,K_2,  \dots, K_{n-1}, K_n\}$ with $K_0 \prec K_1 \prec K_2 \prec \dots \prec K_{n - 1} \prec K_n$ and $n = |\Gamma| - 1 \geq 0$,
the following hold:
\begin{enumerate}[\rm (i)]
\item
for each $t \in T$, there is at most one $i \in \Z$ such that $x_i = t$;
\item
the set of $i \in \Z$ such that $x_i \in T$ is finite; 
\item
for each $K \in \KK$, the set of $i \in \Z$ such that $x_i \in K$ is an interval of $\Z$;
\item
if $n = 0$, then  $x_i \in K_0$ for all $i \in \Z$;
\item
if $n \geq 1$, then
there exist  unique integers
\[
b_0 < a_1 \leq b_1 < a_2 \leq b_2 < \dots <  a_{n -1} \leq b_{n - 1} < a_n
\] in $\Z$ 
such that one has, for every $i \in \Z$,
\begin{equation}
\label{e:normal-form-irred}
x_i \in 
\begin{cases}
K_0 & \text{ if } i \in (-\infty,b_0], \\
K_j & \text{ if } i \in [a_j,b_j] \text{ for some } j \in \{1,2,\dots,n-1\}, \\
K_n & \text{ if } i \in [a_n,\infty), \\
T & \text{ otherwise.}
\end{cases}
\end{equation}
\end{enumerate}
\end{proposition}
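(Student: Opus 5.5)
The whole argument rests on one observation: the sequence $x$ is a bi-infinite path in $G$, so $i \le j$ implies $x_i \preceq x_j$ in the sense of paths (for $i<j$ the word $x_{[i,j]}$ is itself a path). All the assertions will follow from this monotonicity together with the finiteness of $V$.

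\emph{Step 1: transient vertices and the chain property.} For (i), suppose $x_i = x_j = t$ with $i < j$. Then $x_{[i,j]}$ is a cycle based at $t$, contradicting transience. Hence each transient vertex occurs at most once. Since $T$ is finite, (ii) follows.

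Next, we show $\Gamma$ is non-empty. By (ii), some vertex $v$ occurs at infinitely many positions $i<j$. Then $x_{[i,j]}$ is a cycle, so $v \in R$ and its class belongs to $\Gamma$.

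For the chain property, take $K, K' \in \Gamma$ with $x_i \in K$ and $x_j \in K'$, and say $i \le j$. Then $x_i \preceq x_j$, so $K \preceq K'$. Since $\KK$ is a poset, $\Gamma$ is a totally ordered finite set, and we may list it as $K_0 \prec \dots \prec K_n$.

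\emph{Step 2: convexity, which is assertion (iii).} Let $i<k<j$ with $x_i, x_j \in K$. Then $x_i \preceq x_k \preceq x_j \preceq x_i$, the last relation holding because $K$ is strongly connected. This yields a cycle through $x_k$, so $x_k \in R$, and $x_k \sim x_i$, so $x_k \in K$. Thus $I_j := \{i : x_i \in K_j\}$ is a non-empty interval.

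Moreover, monotonicity forces the intervals to be ordered: if $j<l$ then every element of $I_j$ is smaller than every element of $I_l$. Otherwise some $p \in I_l$ and $q \in I_j$ with $p<q$ would give $K_l \preceq K_j$, hence $K_l = K_j$ by antisymmetry, contradicting $j<l$.

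\emph{Step 3: the endpoints, giving (iv) and (v).} The complement of $\bigcup_j I_j$ consists of the indices with $x_i \in T$, and this set is finite by (ii). Hence the leftmost interval $I_0$ must be unbounded below and the rightmost interval $I_n$ unbounded above; all other intervals are bounded, since they lie strictly between $I_0$ and $I_n$.

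If $n=0$, then $I_0$ is unbounded on both sides and the gaps are finite. The only possibility is $I_0 = \Z$. The subtle point is that there cannot be a transient gap inside $I_0$, and this is exactly convexity from Step 2; this gives (iv).

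If $n \ge 1$, set $I_0 = (-\infty, b_0]$, $I_j = [a_j, b_j]$ for $0<j<n$, and $I_n = [a_n, \infty)$. The ordering of the intervals gives $b_0 < a_1 \le b_1 < a_2 \le \dots < a_n$, and uniqueness is clear because the $I_j$ are determined by $x$. All positions outside the $I_j$ carry transient vertices, which is (e:normal-form-irred).

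The only mildly delicate point is Step 2, namely checking that a vertex visited between two visits to $K$ is recurrent and lies in $K$. Everything else is bookkeeping.
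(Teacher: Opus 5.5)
Your proof is correct and follows the same route as the paper. Both arguments get (i) from the cycle $x_{[i,j]}$, (ii) from the finiteness of $T$, (iii) from the paths $x_{[i,k]}$ and $x_{[k,j]}$, and the chain property and normal form from the monotonicity $K \preceq K'$ along $x$ combined with (ii) and (iii); you simply spell out details the paper leaves implicit, such as the recurrence of $x_k$ in (iii), the non-emptiness of $\Gamma$, and why the extreme intervals are unbounded.
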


\begin{proof}
Let $t \in T$ and suppose by contradiction that $x_i = x_j = t$ for some  $i,j \in \Z$ with $i < j$.
Then the word $x_i x_{i+1} \cdots  x_j$ is a cycle based at $t$, contradicting the fact that the vertex $t$ is transient.
This shows (i).
\par
Assertion (ii) follows from (i) since the set $T$ is finite.
\par
If $i \leq j$ are integers such that $x_i,x_j \in K$ for some $K \in \KK$, then $x_k \in K$ for all integers $k \in [i,j]$.
Indeed, if $i < k < j$, then $x_i x_{i+1} \cdots x_k$ is a path from $x_i$ to $x_k$  and $x_k x_{k+1}  \cdots x_j$ is a path from $x_k$ to $x_j$.
This shows (iii).
\par
We know that $\KK$ is a partition of $R = V \setminus T$.
Thus, given $i \in \Z$, either $x_i \in T$ or $x_i \in K$ for some unique $K \in \KK$.
On the other hand, if $K,K' \in \KK$ are such that $x_i \in K$ and $x_j \in K'$ for some integers $i < j$,
then $K \preceq K'$, since $x_i x_{i + 1}\cdots x_j$ is a path in $G$.
As the poset $\KK$ is finite, combining these observations with (ii) and (iii), 
we deduce that $\Gamma$ is a non-empty chain and we get (iv) and (v). 
\end{proof}

Let $X \subset V^{\Z}$ denote the vertex subshift associated with a finite directed graph $G = (V,E)$ and let $x \in X$.
With the notation introduced in Proposition~\ref{p:normal-form-irred}, 
one says that $\Gamma$ is the \emph{chain of path-components} of   $x$
and that  $K_0$ (resp.~$K_n$) is  the
\emph{initial} (resp.~\emph{terminal}) path-component of $x$.

\begin{proposition}
\label{p:limi-set-vertex-ss}
Let $G = (V,E)$ be a finite directed graph and let $X \subset V^{\Z}$ denote the vertex subshift associated with $G$.
Let $x \in X$ and denote by $K$ (resp.~$K'$) the initial (resp.~terminal) path-component of $x$.
Let $C$ (resp.~$C'$) denote the  irreducible subshift of $V^{\Z}$ associated with $K$ (resp.~$K'$).
Then one has
\begin{equation}
\label{e:alpha-C-omega-C-prime}
\alpha(x) \subset C
\quad \text{and} \quad
\omega(x) \subset C'.
\end{equation}
Moreover, the following conditions are equivalent:
\begin{enumerate}[\rm (a)]
\item
  $C = C'$;
  \item
  $K = K'$;
  \item
$x \in C$;
\item
$x \in C'$;
\item
$x \in \NW(X)$.
\end{enumerate}
\end{proposition}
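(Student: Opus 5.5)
The plan is to read everything off the normal form of Proposition~\ref{p:normal-form-irred}. For the inclusion $\alpha(x) \subset C$, I will use that if $n = 0$ then $x_i \in K$ for all $i$, and if $n \geq 1$ then $x_i \in K_0 = K$ for all $i \leq b_0$. Let $y \in \alpha(x)$, say $y = \lim_k \sigma^{-n_k}(x)$ with $n_k \to \infty$. Fix $j \in \Z$. For $k$ large we have $y_j = x_{j - n_k}$, and $j - n_k \leq b_0$, so $y_j \in K$. Hence every coordinate of $y$ lies in $K$. Since $y \in X$, consecutive coordinates form edges of $G$, and these edges lie in $G_K$ because both endpoints are in $K$. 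Therefore $y \in C$. The inclusion $\omega(x) \subset C'$ follows symmetrically, using the tail $[a_n,\infty)$.

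For the equivalences, (a)$\Leftrightarrow$(b) holds because $C$ determines $K$ as the set of letters appearing in $C$. Indeed, $C$ is the vertex subshift of the strongly connected graph $G_K$, so every vertex of $K$ lies on a cycle in $K$ and thus appears in some configuration of $C$. For (b)$\Rightarrow$(c),(d): if $K = K'$, the chain $\Gamma(x)$ is a singleton, so $n = 0$. Then all $x_i \in K$ by Proposition~\ref{p:normal-form-irred}(iv), and hence $x \in C = C'$. For (c)$\Rightarrow$(b): if $x \in C$, all letters of $x$ lie in $K$, so $\Gamma(x) = \{K\}$ and $K' = K$. The implication (d)$\Rightarrow$(b) is the same argument.

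For (c)$\Rightarrow$(e), I will show $C \subset \NW(X)$ using the language criterion. Any word $u \in L(C)$ starts and ends in $K$, and strong connectivity gives a path from the last letter of $u$ back to the first. This produces $v$ with $uvu \in L(C)$, so $C$ is non-wandering. Consequently every point of $C$ is non-wandering in $C$, and hence in $X$, since neighborhoods in $X$ restrict to neighborhoods in $C$.

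The main step is (e)$\Rightarrow$(b). Suppose $x \in \NW(X)$ and, for contradiction, $K \neq K'$, so $K \prec K'$ and $n \geq 1$. Consider the cylinder $U = [x_{[b_0, a_n]}]_{b_0}$, which contains $x$. Non-wandering gives $m \geq 1$ and $z \in U$ with $\sigma^m(z) \in U$. Then $z$ has the word $w = x_{b_0} \cdots x_{a_n}$ at position $b_0$ and again at position $b_0 + m$. If $m > a_n - b_0$, the segment $z_{[a_n, b_0+m]}$ is a path from $x_{a_n} \in K'$ to $x_{b_0} \in K$, so $K' \preceq K$. If the two occurrences overlap, then $z_{a_n} = x_{a_n}$ and $z_{b_0+m} = x_{b_0}$ with $b_0 + m$ possibly less than $a_n$, which would seem to break the argument. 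However, both occurrences agree with $x$ on $[b_0, a_n]$, so $x_{b_0+m} = z_{b_0+m} = x_{b_0} \in K$ with $b_0 + m > b_0$. This contradicts the fact that $x_i \notin K$ for $i > b_0$, by interval property (iii). In either case $K' \preceq K$ together with $K \prec K'$ contradicts antisymmetry, so $K = K'$. Handling this overlap case is the only delicate point.
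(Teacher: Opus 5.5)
Your proof is correct and follows the paper's route. The inclusion $\alpha(x)\subset C$ is read off from the normal form: you argue coordinatewise that $y$ is a configuration of $G_K$, whereas the paper approximates $y$ by points of $C$ and uses that $C$ is closed. The implication (e)$\Rightarrow$(b) uses the same cylinder $\{y\in X : y_{[b_0,a_n]}=x_{[b_0,a_n]}\}$, and your treatment of the overlapping case $b_0+m\le a_n$ spells out the disjointness of the sets $\sigma^m(U)$ that the paper only asserts. One small correction there: the fact that $x_i\notin K$ for $i>b_0$ comes from the normal form (v), not from the interval property (iii) alone.
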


\begin{proof}
Let $y \in \alpha(x)$ and 
let $U$ be a neighborhood of $y$ in $X$.
Choose  $k \in \N$ so that
\[   
U' \coloneqq \{z \in X :z_{[-k,k]}  = y_{[-k,k]} \} \subset U.
\]
By Assertions (iv) and (v) in Proposition~\ref{p:normal-form-irred},
there exists  $b_0 = b_0(x) \in \Z$ such that $x_i \in K$ for all  $i \leq b_0$.
Since $y \in \alpha(x)$, there exists  $m \in \N$ such that 
$k - m \leq b_0$ and 
$\sigma^{-m}(x) \in U'$.
This implies $y_i = x_{i-m} \in K$ for all $-k \leq  i \leq k$.
Since $y_{-k}y_{-k+1} \cdots y_k$ is a path in $K$ and $K$ is strongly connected, 
we can find $z \in C$ such that $z_{[-k,k]} = y_{[-k,k]}$.
We then have $z \in U' \subset U$.
Thus, every neighborhood of $y$ in $X$ meets $C$.
As $C$ is closed in $X$, we deduce that $y \in C$.
This shows $\alpha(x) \subset C$.
\par
The proof of $\omega(x) \subset C'$ is similar.
\par
This shows~\eqref{e:alpha-C-omega-C-prime}.
\par
Let us prove now the equivalence of Conditions (a)---(e).
\par
The elements of $\KK$ form a partition of the set of recurrent vertices.
We deduce that (a) and (b) are equivalent.
 \par
If $x \in C$, then $x_i \in K$ for all $i \in \Z$.
This implies that $K$ is both the initial and the terminal path-component of $x$.
Therefore $K = K'$.
Conversely, suppose now  $K = K'$. 
Then $x_i \in K$ for all $i \in \Z$
by Proposition~\ref{p:normal-form-irred}.(iv).
Hence $x \in C$.
This shows that (c) and (a) are equivalent.
\par
The proof of the equivalence of (d) and (a) is similar.
\par
Since the subgraph $K$ is strongly connected, 
the subshift $C$ is irreducible and hence  non-wandering. It follows that if $x \in C$ then $x \in \NW(X)$.
Therefore (c) implies (e).
\par
Suppose now $K \not= K'$.
With the notation of Proposition~\ref{p:normal-form-irred}.(iv), we have
$K_0 = K \prec K' = K_n$.
Consider the open neighborhood  $U$ of $x$ in $X$ defined  by  
\[
U \coloneqq \{y \in X : y_{[b_0,a_n]} = x_{[b_0,a_n]} \}.
\]
We deduce from Proposition~\ref{p:normal-form-irred}.(iv)  that the sets
$\sigma^m(U)$, $m \geq 0$, are all disjoint.
Therefore $x \notin \NW(X)$.
This shows that (e) implies (b) and
completes the proof of the equivalence of (a)---(e). 
\end{proof}

\begin{proposition}
\label{p:nw-vertex-ss}
Let $G = (V,E)$ be a finite directed graph and let $X \subset V^{\Z}$ denote the subshift of finite type associated with $G$. 
Let $\KK$ denote the poset of path-components of $G$.
For each $K \in \KK$, denote by $C_K \subset V^{\Z}$ the   subshift of finite type associated with $K$.
Then the following hold:
\begin{enumerate}[\rm (i)]
\item
each $C_K$, $K \in \KK$, is an irreducible subshift of finite type;
\item
the subshifts $C_K$, $K \in \KK$, are pairwise disjoint;
\item
$\NW(X) = \bigcup_{K \in \KK} C_K$; 
\item
$\NW(X)$ is a subshift of finite type;
\item
given $K,K' \in \KK$, one has $K \preceq K'$ if and only if there exists $x \in X$ such that
$\alpha(x) \subset C_K$ and $\omega(x) \subset C_{K'}$.
\end{enumerate}
\end{proposition}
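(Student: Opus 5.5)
Each of the five assertions can be handled with Propositions~\ref{p:normal-form-irred} and~\ref{p:limi-set-vertex-ss}, taken in order.

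For (i), observe that $C_K$ is the vertex subshift associated with the induced subgraph $G_K$. It is therefore of finite type, with defining set of forbidden words $K^2 \setminus E$ together with the single letters of $V \setminus K$. It is irreducible because $K$ is strongly connected, so $G_K$ is strongly connected. Non-emptiness holds because $K$ consists of recurrent vertices, so any cycle in $K$ yields a periodic configuration in $C_K$. Irreducibility also follows from the language criterion: for $u,v \in L(C_K)$, the terminal letter of $u$ and the initial letter of $v$ lie in $K$, so they can be joined by a path inside $K$, giving a connecting word $w$ with $uwv \in L(C_K)$.

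For (ii), note that the path-components are pairwise disjoint subsets of $V$, and every element of $C_K$ takes all its values in $K$. So $C_K \cap C_{K'} = \varnothing$ whenever $K \neq K'$.

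For (iii), apply the equivalence (c)$\Leftrightarrow$(e) of Proposition~\ref{p:limi-set-vertex-ss}. It gives $x \in \NW(X)$ if and only if $x \in C_K$, where $K$ is the initial path-component of $x$. Hence $\NW(X) \subset \bigcup_K C_K$. Conversely, each $x \in C_K$ has initial path-component $K$, so $x \in \NW(X)$.

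For (iv), the union in (iii) is a finite disjoint union of subshifts of finite type over the common alphabet $V$. To see that it is of finite type, I will describe it as the vertex subshift associated with the subgraph $(R, E')$, where $E' \coloneqq \bigcup_{K \in \KK} (E \cap K^2)$. Indeed, $x$ belongs to this vertex subshift if and only if every edge $x_ix_{i+1}$ stays inside one component. By induction along $\Z$, all $x_i$ then lie in the same $K$, which is exactly membership in some $C_K$. This edge-set description is the only step requiring care, and it is routine.

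For (v), suppose first that $K \preceq K'$. Choose $u \in K$ and $v \in K'$, a cycle $c$ at $u$ in $K$, a cycle $c'$ at $v$ in $K'$, and a path $p$ from $u$ to $v$. Define $x$ as $\cdots c\, c\, p\, c'\, c' \cdots$, that is, the left-infinite repetition of $c$ followed by $p$ and then the right-infinite repetition of $c'$. Then $x$ has initial component $K$ and terminal component $K'$, so \eqref{e:alpha-C-omega-C-prime} gives $\alpha(x) \subset C_K$ and $\omega(x) \subset C_{K'}$. Conversely, suppose some $x$ satisfies $\alpha(x) \subset C_K$ and $\omega(x) \subset C_{K'}$. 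Let $K_0$ and $K_n$ be the initial and terminal components of $x$. By \eqref{e:alpha-C-omega-C-prime}, $\alpha(x) \subset C_{K_0}$ as well. Since $\alpha(x)$ is non-empty, $C_{K_0} \cap C_K \neq \varnothing$, and (ii) forces $K = K_0$. Likewise $K' = K_n$. Finally, $K_0 \preceq K_n$ because $\Gamma(x)$ is a chain by Proposition~\ref{p:normal-form-irred}.
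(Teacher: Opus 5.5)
Your proof is correct and follows the paper's argument item by item: strong connectivity for (i), disjointness of path-components for (ii), the equivalence (c)$\Leftrightarrow$(e) of Proposition~\ref{p:limi-set-vertex-ss} for (iii), and the cycle--path--cycle configuration together with \eqref{e:alpha-C-omega-C-prime} for (v), where you are more explicit than the paper about why $\alpha(x)\subset C_K$ forces $K$ to be the initial path-component. In (iv) your edge set $E'=\bigcup_{K\in\KK}(E\cap K^2)$ is the right one, and it sharpens the paper's phrase ``the subgraph of $G$ whose set of vertices is $\bigcup_K K$'': that phrase must be read with this edge set, not as the induced subgraph $G_R$, because $G_R$ can contain edges between distinct components (in Example~\ref{ex:type-5}, $G_R=G$ but $\NW(X)\neq X$).
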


\begin{proof}
Assertion (i) follows from the fact that each $K \in \KK$ is strongly connected (this has been already observed in the proof of Proposition~\ref{p:limi-set-vertex-ss}).
\par
For every $x \in C_K$, $K \in \KK$, we have $\{x_i : i \in \Z\} \subset K$.
As the elements of $\KK$ are pairwise disjoint, Assertion (ii) follows.
\par 
Assertion (iii)  follows from the equivalence between Conditions (c) and (e) in Proposition~\ref{p:limi-set-vertex-ss}.
\par
By (iii), $\NW(X)$ is the vertex subshift associated with the subgraph of $G$ whose set of vertices is $\bigcup_{K \in \KK} K$. 
This shows (iv).
\par
Let $K,K' \in \KK$.
Choose vertices $v \in K$ and $v' \in K'$.
Suppose first $K \preceq K'$.
This means that there exists a path $v_0\cdots v_n$ in $G$ of length $n \geq 1$  from $v$ to $v'$.
Since the vertices $v$ and $v'$ are recurrent,
we can find a cycle $c_0\cdots c_k$ based at $v$ and a cycle $c_0' \cdots  c_{k'}'$ based at $v'$.
Consider the sequence $x \in V^{\Z}$ defined by
\[
x_i \coloneqq
\begin{cases}
c_j & \text{ if } i \leq 0 \text{ and } i \equiv j \mod k, \\
v_i & \text{ if } 0 \leq i \leq n \\
c_j' & \text{ if } n \leq i \text{ and } i - n \equiv j \mod k'. 
\end{cases}
\]
Then $K$ (resp.~$K'$) is the initial (resp.~terminal) path-component of $x$.
Therefore $\alpha(x) \subset C_K$ and $\omega(x) \subset C_{K'}$ by \eqref{e:alpha-C-omega-C-prime}.
\par
Conversely, suppose that there exists $x \in X$ such that $\alpha(x) \subset C_K$ and $\omega(x) \subset C_{K'}$.
Then $K$ (resp.~$K'$) is the initial (resp.~terminal) path-component of $x$ by \eqref{e:alpha-C-omega-C-prime}. 
It follows that there exist integers $m,n$ with $m < n$ such that
$x_i \in K$ for all $i \leq m$ and $x_i \in K'$ for all $n \leq i$.
Then $x_m \cdots x_n$ is a path in $G$ from $x_m \in K$ to $x_n \in K'$.
Therefore  $K \preceq K'$. This completes the proof of (v).
\end{proof}

\subsection{Irreducible components of  subshifts of finite type}

\begin{proposition}
\label{p:irred-comp}
Let $A$ be a finite set and let $X \subset A^{\Z}$ be a subshift of finite type.
Then the following hold:
\begin{enumerate}[\rm (i)]
\item
the subshift $\NW(X)$ is of finite type;
\item
if $\CC$ is a finite partition of $\NW(X)$ into irreducible subshifts and $Z \subset X$ is an irreducible subshift,  then there exists  a unique $C \in \CC$ such that  $Z \subset C$;
\item
there exists a unique finite partition  $\CC$  of $\NW(X)$ into irreducible subshifts;
\item
each subshift $C \in \CC$ is of finite type;
\item
given $x \in X$,  there exist a unique $C_\alpha(x) \in \CC$ and a unique $C_\omega(x) \in \CC$ such that
$\alpha(x) \subset C_\alpha(x)$ and $\omega(x) \subset C_\omega(x)$;
\item
given $x \in X$ and $C \in \CC$,
one has $C_\alpha(x) = C_\omega(x) = C$ if and only if $x \in C$;
\item
given $x \in X$, one has $x \in \NW(X)$ if and only if $C_\alpha(x) = C_\omega(x)$;
\item 
the relation $\preceq$ on $\CC$, defined by writing $C \preceq C'$ for $C,C' \in \CC$ if  there exists $x \in X$ such that
$C_\alpha(x) = C$ and $C_\omega(x) = C'$, is a partial ordering on $\CC$.
\end{enumerate}
\end{proposition}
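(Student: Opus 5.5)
The plan is to reduce to a vertex subshift and transport everything along a conjugacy. By the remark preceding Proposition~\ref{p:normal-form-irred}, there is a finite directed graph $G=(V,E)$ and a topological conjugacy $\varphi \colon X \to Y$, where $Y \subset V^{\Z}$ is the vertex subshift of $G$. All notions occurring in the statement are invariant under conjugacy: non-wandering points (Proposition~\ref{p:properties-endo}.(i) applied to $\varphi$ and $\varphi^{-1}$), irreducibility (Proposition~\ref{p:properties-endo}.(iv)), limit sets (Proposition~\ref{p:properties-endo}.(ii)), and being of finite type. So it suffices to prove (i)--(viii) for $Y$, and then pull back through $\varphi^{-1}$.

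For $Y$, the candidate partition is $\CC_0 \coloneqq \{C_K : K \in \KK\}$. By Proposition~\ref{p:nw-vertex-ss}, each $C_K$ is an irreducible subshift of finite type, and these sets are pairwise disjoint with union $\NW(Y)$. Also $\NW(Y)$ is of finite type, which gives (i) and (iv). Moreover, each $C_K$ is nonempty because $K$ consists of recurrent vertices.

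The key lemma for (ii) and (iii) is this: if $\CC$ is any finite partition of $\NW(X)$ into irreducible subshifts and $Z \subset X$ is irreducible, then $Z$ lies in a single member of $\CC$.

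1. Since $Z$ is non-wandering, Proposition~\ref{p:properties-endo}.(i) applied to the inclusion gives $Z \subset \NW(X)$.
2. The members of $\CC$ are finitely many pairwise disjoint closed sets, so each is clopen in $\NW(X)$. Their traces on $Z$ are therefore clopen invariant subsets of $Z$.
3. Suppose $Z$ met two members. Irreducibility yields a point $z$ whose forward orbit is dense in $Z$. Its orbit stays in the one clopen invariant piece containing $z$, so it cannot be dense in $Z$. This contradiction proves the lemma, and uniqueness is automatic from disjointness.

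Uniqueness in (iii) then follows. Given two such partitions $\CC$ and $\CC'$, each member of $\CC'$ lies in a member of $\CC$ and conversely. Since both are partitions of the same set, they coincide. Existence comes from $\CC_0$ transported by $\varphi^{-1}$.

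For (v), Proposition~\ref{p:limi-set-vertex-ss} gives $\alpha(x) \subset C_K$ and $\omega(x) \subset C_{K'}$, with $K$ and $K'$ the initial and terminal path-components. Uniqueness holds because $\alpha(x)$ is nonempty and the members of $\CC$ are disjoint.

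Statements (vi) and (vii) are the equivalences (a)$\Leftrightarrow$(c)$\Leftrightarrow$(e) of Proposition~\ref{p:limi-set-vertex-ss}. For (vi), we have $C_\alpha(x)=C_\omega(x)=C$ if and only if $K=K'$ and $C=C_K$, which holds if and only if $x\in C_K$. Here we use that the $C_K$ are disjoint and that $C_K$ determines $K$.

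For (viii), Proposition~\ref{p:nw-vertex-ss}.(v) shows that $C_K \preceq C_{K'}$ if and only if $K \preceq K'$ in $\KK$. Hence $K \mapsto C_K$ is an order isomorphism from $\KK$, which is a poset. After transport by $\varphi$, this relation is exactly the relation defined on $\CC$.

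The only real obstacle is making sure every notion transfers under conjugacy. In particular, $\varphi$ carries the partition of $\NW(X)$ to that of $\NW(Y)$ and satisfies $\varphi(\alpha(x))=\alpha(\varphi(x))$. Both facts are covered by Proposition~\ref{p:properties-endo}.
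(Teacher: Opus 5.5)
Your proposal is correct and follows essentially the same route as the paper: reduce to a vertex subshift, take $\{C_K : K \in \KK\}$ as the partition, prove (ii) by noting that the members of $\CC$ cut $Z$ into clopen invariant pieces, which is incompatible with irreducibility, deduce uniqueness in (iii) from (ii), and read off (v)--(viii) from Propositions~\ref{p:limi-set-vertex-ss} and~\ref{p:nw-vertex-ss}. Your citation of Proposition~\ref{p:nw-vertex-ss}.(v) for (viii) is the right one (the paper's text refers to a nonexistent item (vi) there), and your explicit treatment of transport under the conjugacy $\varphi$ fills in what the paper leaves implicit.
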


\begin{proof}
Since every subshift of finite type is topologically conjugate to a vertex subshift, we may assume that $X$ is the vertex subshift associated with a finite directed graph $G = (V,E)$.
\par
Assertion~(i) is then   Assertion~(iv) in Proposition~\ref{p:nw-vertex-ss}.
\par
To prove (ii),
suppose that $\CC$ is a finite partition of $\NW(X)$ into irreducible subshifts  and
let $Z \subset X$ be an irreducible subshift.
Then $Z \subset \NW(X)$ since every irreducible dynamical system  is non-wandering.
As $\CC$ is a finite partition of $\NW(X)$ into closed subsets,
each element of $\CC$ is open in $\NW(X)$.
We deduce that the sets $Z \cap C_K$, $K \in \KK$, are open
$\sigma$-invariant subsets of $Z$.
Since $Z$ is irreducible, it follows that there exists a unique $C \in \CC$ such that $Z \subset C$.
This shows (ii).
\par
To prove (iii), consider the set  $\CC \coloneqq  \{C_K : K \in \KK\}$, where $\KK$ denotes the set of path-components of $G$ and $C_K$ denotes the vertex subshift associated with $K \in \KK$.
 We know that $\CC$ is a finite partition of $\NW(X)$ into irreducible subshifts   (cf.~Assertions~(i), (ii), and (iii)   in Proposition~\ref{p:nw-vertex-ss}).
This shows existence in (iii).
\par
To prove uniqueness, suppose that $\CC'$ is another  finite partition of $\NW(X)$ into irreducible subshifts.
By (ii), there exist a  map $f \colon \CC \to \CC'$ such that $C \subset f(C)$ for all $C \in \CC$ and a  map
$g \colon \CC' \to \CC$ such that $C' \subset g(C')$ for all $C' \in \CC'$.
We then have $C \subset g(f(C))$ and hence $C = g(f(C))$ for all $C \in \CC$. 
This implies that $f$ is bijective and $\CC = \CC'$ follows. 
This completes the proof of  (iii).
\par
Assertion (iv) follows from Proposition~\ref{p:nw-vertex-ss}.(i).
\par
Assertions~(v), (vi), and  (vii)   follow from  Proposition~\ref{p:limi-set-vertex-ss}.
\par
Assertion~(viii) follows from Proposition~\ref{p:nw-vertex-ss}.(vi).
\end{proof}

With the notation of Proposition~\ref{p:irred-comp},
the subshifts  $C$, $C \in \CC$, are called the \emph{irreducible components} of $X$.
As $\CC$ is a poset, it makes sense to speak of an isolated (resp.~intermediate, resp.~extremal, resp.~maximal, resp.~minimal) irreducible component of $X$.
\par
From the results of Proposition~\ref{p:irred-comp}, we get the following characterization of non-wandering subshifts of finite type.
 
\begin{corollary}
\label{c:char-nw-sft}
Let $A$ be a finite set and let $X \subset A^{\Z}$ be a subshift of finite type.
Then $X$ is non-wandering if and only if $X$ is the union of its isolated irreducible components.
\end{corollary}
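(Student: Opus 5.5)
We prove both implications of Corollary~\ref{c:char-nw-sft} using Proposition~\ref{p:irred-comp} and the vertex-subshift description. Since every subshift of finite type is topologically conjugate to a vertex subshift, and a conjugacy carries non-wandering points to non-wandering points, irreducible components to irreducible components, and preserves the order $\preceq$ of Proposition~\ref{p:irred-comp}.(viii), we may take $X$ to be the vertex subshift of a finite directed graph $G=(V,E)$. Then $\CC=\{C_K : K\in\KK\}$, and by Proposition~\ref{p:nw-vertex-ss}.(v) the map $K\mapsto C_K$ is an order isomorphism from $(\KK,\preceq)$ onto $(\CC,\preceq)$.

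For the direct implication, assume $X$ is non-wandering, so $X=\NW(X)=\bigcup_{C\in\CC}C$ by Proposition~\ref{p:irred-comp}.(iii). Let $C\in\CC$ and suppose $C\preceq C'$ for some $C'\in\CC$. By the definition of $\preceq$ in Proposition~\ref{p:irred-comp}.(viii), there is $x\in X$ with $C_\alpha(x)=C$ and $C_\omega(x)=C'$. Since $x\in\NW(X)$, Proposition~\ref{p:irred-comp}.(vii) gives $C_\alpha(x)=C_\omega(x)$, hence $C=C'$. Thus $C$ is maximal. The symmetric argument, starting from $C'\preceq C$, shows $C$ is minimal. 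So every component is isolated, and $X$ is the union of its isolated irreducible components.

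For the converse, assume $X$ is the union of its isolated irreducible components. Each of these is contained in $\NW(X)$ by Proposition~\ref{p:irred-comp}.(iii). Hence $X\subset\NW(X)$, so $X=\NW(X)$ and $X$ is non-wandering.

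No step is a real obstacle. The only point needing care is in the direct implication: we must use that the order on $\CC$ is witnessed by points $x\in X$ through their $\alpha$- and $\omega$-limit components, which is exactly the content of Proposition~\ref{p:irred-comp}.(viii), so that non-wandering forces every such witness to have equal initial and terminal components.
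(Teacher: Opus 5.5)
Your proof is correct and is essentially the argument the paper intends when it says the corollary follows from Proposition~\ref{p:irred-comp}: for the forward direction, non-wandering forces $C_\alpha(x)=C_\omega(x)$ for every witness $x$ of $C\preceq C'$, by items (vii) and (viii); for the converse, every irreducible component lies in $\NW(X)$ by item (iii). Your preliminary reduction to a vertex subshift is harmless but unnecessary, since all the items you invoke are already stated for arbitrary subshifts of finite type.
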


\begin{remark}
\label{r:irred-vertex-sft}
Suppose that $X \subset V^{\Z}$ is the vertex subshift associated with a finite directed graph $G = (V,E)$.
Let $\KK$ denote the set of path-components of $G$.
For each $K \in \KK$, denote by $C_K \subset V^{\Z}$ the   subshift of finite type associated with $K$.
It follows from Proposition~\ref{p:nw-vertex-ss} that the set
of irreducible components of $X$ is $\CC = \{C_K : K \in \KK\}$ and that the map
$\KK \to \CC$, $K \mapsto C_K$, is a poset isomorphism.
One says that $K$ (resp.~$C_K$) is the path-component (resp.~irreducible component) \emph{associated with} $C_K$ (resp.~$K$).
Note also that, for every $x \in X$,
if the initial (resp.~terminal) path-component of $x$ is $K$ (resp.~$ K'$), then one has $C_\alpha(x) = C_K$ (resp.~$C_\omega(x) = C_{K'}$). 
\end{remark}

\begin{proposition}
\label{p:finite-component-1-1}
Let $G = (V,E)$ be a finite directed graph and let $X \subset V^{\Z}$ denote the associated vertex subshift.
Let $K$ be a path-component of $G$ and let $C$ denote the irreducible component of $X$ associated with $K$.
Then the following conditions are equivalent:
\begin{enumerate}[{\rm (a)}]
\item 
$C$ is finite with cardinality $|C| = n$;
\item 
there exists a simple cycle $v_0 v_1  \cdots  v_n$ in $G$ such that $K = \{v_0, v_1, \ldots, v_{n-1}\}$
and $E \cap K^2  = \{v_rv_{r + 1} : 0 \leq r \leq n - 1\}$.
\end{enumerate}
\end{proposition}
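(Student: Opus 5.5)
We prove the two implications separately. Throughout, $C = C_K$ is the vertex subshift of the induced subgraph $G_K$, and we use that $K$ is strongly connected, so $G_K$ has at least one edge (every vertex of $K$ is recurrent). We also use that a cycle in $G$ through vertices of $K$ stays inside $K$: any vertex on such a cycle is both reachable from and reaching a vertex of $K$, hence lies in $K$.

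For (b) $\Rightarrow$ (a), suppose $K=\{v_0,\dots,v_{n-1}\}$ and the only edges of $G$ inside $K$ are $v_rv_{r+1}$, with indices taken mod $n$ since $v_n=v_0$. Then every $x\in C$ satisfies $x_{i+1}=v_{r+1}$ whenever $x_i=v_r$, because each vertex of $K$ has exactly one outgoing edge inside $K$. Hence $x$ is determined by $x_0$, so $x=\sigma^j(y)$, where $y$ is the periodic configuration associated with the cycle and $j$ is the index with $x_0=v_j$. Thus $C$ is the orbit of $y$. Since the $v_r$ are distinct, the minimal period of $y$ is exactly $n$, and therefore $|C|=n$.

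For (a) $\Rightarrow$ (b), suppose $C$ is finite with $|C|=n$. The main step is to show that every vertex $v\in K$ has exactly one out-neighbour in $K$. Existence follows from recurrence and the remark above. Suppose instead that $v$ has two distinct out-neighbours $w,w'\in K$. Strong connectivity gives paths back from $w$ and from $w'$ to $v$, which produce two distinct cycles $c,c'$ based at $v$. Concatenating these two cycles according to arbitrary binary sequences gives uncountably many distinct configurations in $C$. More simply, it gives configurations in $C$ that are not periodic, for example $\cdots ccc\,c'\,ccc\cdots$ with $c'$ occurring once. But every point of a finite dynamical system is periodic, so either way we contradict finiteness of $C$.

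With the out-degree within $K$ equal to $1$, start at any $v_0\in K$ and follow the unique edges to get $v_0v_1v_2\cdots$. This sequence eventually revisits a vertex. Because $K$ is strongly connected and each vertex has only one exit in $K$, the walk from $v_0$ must return to $v_0$ itself, and it must visit every vertex of $K$ before doing so. Let $m$ be the first return time. Then $v_0\cdots v_m$ is a simple cycle with $K=\{v_0,\dots,v_{m-1}\}$, and $E\cap K^2$ consists exactly of the cycle edges. Applying (b) $\Rightarrow$ (a) gives $|C|=m$, so $m=n$.

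The step needing the most care is ruling out out-degree at least $2$. The key is that distinct cycles at $v$ give a non-periodic point of $C$, which cannot exist when $C$ is finite.
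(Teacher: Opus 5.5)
Your proof is correct. Your (b) $\Rightarrow$ (a) matches the paper's, but your (a) $\Rightarrow$ (b) takes a different route.

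\textbf{The paper's route.} It takes a cycle of minimal length $m$ in $K$, which is automatically simple, with associated periodic configuration $x$. It then applies Proposition~\ref{p:finite-irred-ds} to the finite irreducible system $C$. Since every point of $C$ has minimal period $|C| = n$, this gives $m = n$ and $\OO(x) = C$. Every vertex $v \in K$ and every edge $vw \in E \cap K^2$ lies on some cycle in $K$. The periodic configuration of that cycle lies in $C = \OO(x)$, so it is a shift of $x$. This reads off $K$ and $E \cap K^2$ directly.

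\textbf{Your route.} You argue locally. A vertex with two out-neighbours in $K$ gives two cycles at $v$ with different second letters. Gluing one copy of $c'$ into the $c$-periodic configuration produces a non-periodic point of $C$, which is impossible. Out-degree one plus strong connectivity then makes $K$ a single deterministic loop, and (b) $\Rightarrow$ (a) identifies $m = n$.

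\textbf{Comparison.} The paper's argument is shorter because it uses transitivity of $C$: finite and irreducible forces a single orbit. Yours needs only the weaker fact that every point of a finite system is periodic. The price is two small verifications that you compress:
\begin{enumerate}
\item The glued configuration is non-periodic. A periodic point that agrees with the $c$-periodic configuration on a left half-line must equal it everywhere. But the two differ right after the junction, where $w'$ replaces $w$.
\item The first-return walk is simple. Suppose $v_i = v_j$ with $1 \le i < j < m$. Then the deterministic walk from $v_i$ would loop forever without reaching $v_0$, contradicting $v_m = v_0$.
\end{enumerate}
Your branching argument is in the same spirit as the paper's later proofs of Lemma~\ref{l:marking-exist} and Lemma~\ref{l:2-cycles-de-long-n}.
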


\begin{proof}
Suppose (a). 
Let $v_0 v_1  \cdots v_m$ be a cycle in $K$ of minimal length $m \geq 1$.
Note that such a  cycle  is  simple by minimality.
Consider the associated periodic configuration $x \in C$, i.e., the configuration  defined by
$x_i \coloneqq v_j$ if $i \in \Z$ and $i \equiv j \mod m$.
Then $x$ is periodic with minimal period $m$.
By applying Proposition~\ref{p:finite-irred-ds},
we deduce that  $\OO(x) = C$ and  $m = n$.  
Let $v \in K$. 
As $v$ is recurrent, there exists a cycle $v'_0  v'_1  \cdots  v'_{m'}$ based at $v$. Consider the  associated periodic configuration $x'$ defined by setting $x'_i \coloneqq v'_j$ if $i \in \Z$ and $i \equiv j \mod m'$.
As $x' \in C = \OO(x)$, there exists $k \in \{0,1, \ldots, n-1\}$ such that $x' = \sigma^k(x)$.
We deduce that $v = v'_0 = x'_0 = (\sigma^k(x))_0 = x_k = v_k$.
This shows that $K = \{v_0,v_1, \ldots, v_{n-1}\}$.
\par
We have $v_r v_{r + 1} \in E$ for all $0 \leq r \leq n - 1$ since $v_0 v_1 \cdots  v_n$ is a cycle in $G$. 
Conversely, suppose that  $v,w \in K$ are such that  $v w \in E$.
As $K$ is a path-component,
there exists a path $w_1 \cdots w_p$ in $K$ going from $w$ to $v$.
Then, setting $w_0 \coloneqq v$, the word  $w_0 w_1 \cdots w_p$ is a cycle based at $v$ in $K$.
Consider the associated periodic configuration $z$ defined by $z_i \coloneqq w_j$ if $i \in \Z$ and $i \equiv j \mod p$.
We have $z \in C = \OO(x)$. Consequently, there exists $r \in \{0,1,\dots,n-1\}$ such that $z = \sigma^r(x)$.
Then $v = w_0 = z_0 = x_r$ and $w = w_1 = z_1 = x_{r + 1}$.
This shows that $E \cap K^2  = \{v_r v_{r+1} : 0 \leq r \leq n - 1\}$.
The implication (a) $\implies$ (b) follows.
\par
Conversely, suppose (b). 
Consider the configuration $x \in C$ defined by $x_i \coloneqq  v_j$ if $i \in \Z$ and $i \equiv j \mod n$.
Note that $x$ is periodic with minimal period $n$ since $v_0 v_1 \cdots v_n$ is a simple cycle.
Let now $y \in C$.
By (b), for every $i \in \Z$, there exists $r \in \{0,1,\dots,n - 1\}$ such that $y_i y_{i + 1} = v_r v_{r + 1}$.
Denoting by $k$ the unique element in $\{0,1,\dots,n-1\}$ such that $y_0 = v_k$,
we deduce, by induction on $|i|$, that
$y_i = v_j$ for every $i \in \Z$, where $j \in \{0,1,\dots,n-1\}$ satisfies
$k + i \equiv j \mod n$.  
Thus, $y = \sigma^k(x) \in \OO(x)$.
It follows that  $C = \OO(x)$, so that  $|C| = |\OO(x)| = n$.
This shows that (b) implies (a).
\end{proof}

\begin{corollary}
\label{c:int-x-finite-irred-compo}
Let $G = (V,E)$ be a finite directed graph and let $X \subset V^{\Z}$ denote the associated vertex subshift.
Let $K$ be a path-component of $G$ and let $C$ denote the irreducible component of $X$ associated with $K$.
Let $x \in X$ and consider the interval $I \subset \Z$ consisting of all $i \in \Z$ such that $x_i \in K$
(cf.~Proposition~\ref{p:normal-form-irred}.(iii)).
Suppose that $C$ is finite and let $i_0 \in I$. 
Then,  the restriction of $x$ to $I$ is entirely determined by $x_{i_0}$.
\end{corollary}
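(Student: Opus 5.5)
We will rely on Proposition~\ref{p:finite-component-1-1}. Since $C$ is finite with cardinality $n = |C|$, that proposition gives a simple cycle $v_0 v_1 \cdots v_n$ in $G$ with $K = \{v_0, v_1, \ldots, v_{n-1}\}$ and $E \cap K^2 = \{v_r v_{r+1} : 0 \leq r \leq n-1\}$. In particular, each vertex of $K$ has exactly one out-neighbour inside $K$ and exactly one in-neighbour inside $K$. Here indices are taken modulo $n$, so $v_n = v_0$, and the $v_r$ are pairwise distinct by simplicity.

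Write $x_{i_0} = v_k$ with $k \in \{0,1,\dots,n-1\}$; this $k$ is unique since the $v_r$ are distinct. We claim that $x_i = v_j$ for every $i \in I$, where $j \in \{0,\dots,n-1\}$ is determined by $j \equiv k + (i - i_0) \bmod n$.

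The claim is proved by induction on $|i - i_0|$, using that $I$ is an interval (Proposition~\ref{p:normal-form-irred}.(iii)).
\begin{itemize}
\item Going forward: if $i, i+1 \in I$ and $x_i = v_r$, then $x_i x_{i+1} \in E \cap K^2$. The only edge of $E \cap K^2$ starting at $v_r$ is $v_r v_{r+1}$, so $x_{i+1} = v_{r+1}$.
\item Going backward: if $i-1, i \in I$ and $x_i = v_r$, then $x_{i-1} x_i \in E \cap K^2$. The only such edge ending at $v_r$ is $v_{r-1} v_r$, so $x_{i-1} = v_{r-1}$.
\end{itemize}
Because $I$ is an interval containing $i_0$, every $i \in I$ is reached from $i_0$ by a chain of consecutive integers lying in $I$, so the induction covers all of $I$.

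The claim shows that $x|_I$ is a function of $x_{i_0}$ alone (given $I$), which is the statement. There is no real obstacle; the only point needing care is that uniqueness of in-neighbours within $K$ also follows from the description of $E \cap K^2$, together with the distinctness of the $v_r$.
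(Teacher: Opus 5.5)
Your proposal is correct and follows essentially the same route as the paper: invoke Proposition~\ref{p:finite-component-1-1} to get the simple cycle with $E \cap K^2 = \{v_r v_{r+1}\}$, then read off $x_i = v_j$ with $j \equiv k + (i - i_0) \bmod n$ for all $i \in I$. Your forward/backward induction over the interval $I$ just spells out the step that the paper states in one line.
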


\begin{proof}
By virtue of Proposition~\ref{p:finite-component-1-1},
there exists a simple cycle $v_0 v_1  \cdots v_n$ in $G$ such that $K = \{v_0, v_1, \ldots, v_{n - 1}\}$
and $E \cap K^2  = \{v_rv_{r + 1} : 0 \leq r \leq n - 1\}$. 
We deduce that there exists a unique element
$ r \in \{0,1, \dots, n - 1\}$ such that $x_{i_0} = v_r$. 
Moreover, for every $i \in I$, we then have $x_i = v_j$, where $j \in \{0,1,\dots,n-1\}$ is given by  $j \equiv r + i  - i_0 \mod n$.
\end{proof}

\begin{proposition}
\label{p:finite-component-2}
Let $A$ be a finite set and let $X \subset A^{\Z}$ be a subshift of finite type.
Then every  infinite irreducible component of $X$
contains infinitely many periodic configurations.
\end{proposition}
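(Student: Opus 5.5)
We may assume that $X$ is the vertex subshift associated with a finite directed graph $G = (V,E)$. This is legitimate because a topological conjugacy maps irreducible components to irreducible components, preserves their cardinalities, and maps periodic configurations bijectively to periodic configurations. By Remark~\ref{r:irred-vertex-sft}, an infinite irreducible component is then of the form $C = C_K$ for some path-component $K$ of $G$. Applying Proposition~\ref{p:finite-component-1-1} to $C$, condition (b) must fail. So either $K$ is not the vertex set of a single simple cycle, or the edge set $E \cap K^2$ strictly contains the edges of such a cycle. The plan is to use this to produce two distinct cycles based at a common vertex of $K$, and then to obtain infinitely many periodic configurations by concatenation.

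\emph{Step 1: two distinct cycles at a common vertex.} Choose a simple cycle $\pi = v_0 v_1 \cdots v_{n-1} v_0$ inside $K$; one exists because $K$ is strongly connected, and it can be taken of minimal length. By the failure of (b), there is a vertex of $K$ not on $\pi$, or an edge $vw \in E \cap K^2$ that is not of the form $v_r v_{r+1}$. In the first case, strong connectivity gives a path from $v_0$ to that vertex and back inside $K$. In the second case, we take the edge $vw$ followed by a path in $K$ from $w$ back to $v_0$, preceded by the arc of $\pi$ from $v_0$ to $v$. Either way we obtain a cycle $\pi'$ based at $v_0$ whose vertex sequence, as a word, differs from every power $\pi^{k\#}$. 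This is because it uses a vertex or an edge that no power of $\pi$ uses.

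\emph{Step 2: infinitely many periodic points.} For each $k \geq 1$, let $x^{(k)}$ be the periodic configuration associated with the cycle $\pi^{k\#} \# \pi'$. We must show that infinitely many of these configurations are distinct. Each $x^{(k)}$ contains the distinguished vertex or edge from Step 1 at least once per period. Moreover, $x^{(k)}$ contains the block $(v_0 \cdots v_{n-1})^k$, and within one period of length $kn + |\pi'| - 1$ the distinguished vertex or edge occurs only a bounded number of times, namely at most $|\pi'|$. Hence the maximal run of the pattern $\pi$ in $x^{(k)}$ grows with $k$, while $x^{(k)}$ still is not periodic with pattern $\pi$. Two configurations with different maximal runs of $\pi$ that are finite cannot lie in the same orbit; and the $x^{(k)}$ are not in the orbit of the $\pi$-periodic point. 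Therefore the orbits, and hence the points, are pairwise distinct for distinct $k$ (after possibly passing to a subsequence). All of these configurations lie in $C$, since their entries lie in $K$.

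The main obstacle is the bookkeeping in Step 2, namely making the distinctness argument clean. A simpler alternative, which I would try first, is to compare minimal periods. The minimal period of $x^{(k)}$ divides $kn + |\pi'| - 1$. Since the word $(v_0\cdots v_{n-1})^k$ occurs but the distinguished edge also occurs in every period, the minimal period must be at least $kn$, as a shorter period would force the distinguished edge into the long $\pi$-run. Hence the minimal periods tend to infinity, and the set of periodic points of $C$ is infinite.
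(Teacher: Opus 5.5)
Your proof is correct, but it takes a different route from the paper. The paper settles the statement in two lines using an external fact: periodic configurations are dense in any irreducible subshift of finite type \cite{lind-marcus-second}. So if $C$ had only finitely many periodic configurations, $C$ would be the closure of a finite set and hence finite.

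You instead stay inside the paper. You apply the implication (b)$\implies$(a) of Proposition~\ref{p:finite-component-1-1}, whose proof does not depend on the present statement, so there is no circularity. This yields a vertex or an edge of $K$ lying off a minimal simple cycle $\pi$. You then build a second cycle $\pi'$ at $v_0$ through it, essentially the device the paper itself uses later in Lemmas~\ref{l:marking-exist} and~\ref{l:2-cycles-de-long-n}. Finally you show that the periodic points associated with $\pi^{k\#}\#\pi'$ have minimal period exceeding $kn$.

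Your version is self-contained. It also directly yields periodic points of arbitrarily large minimal period, which is exactly how the statement is used later: in Proposition~\ref{p:isolated-configuration} and in the proof of (a)$\implies$(c) of Theorem~\ref{t:char-surj-sft}. The paper's version is shorter and needs no case analysis, at the cost of relying on a nontrivial standard result.

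When you write it up, drop the run-length argument and keep only the minimal-period one, stated precisely. Suppose the minimal period $p$ satisfied $p \le kn$. Then every pair $x_ix_{i+1}$ (resp.\ every letter $x_i$) would coincide with $x_{i'}x_{i'+1}$ (resp.\ $x_{i'}$), where $0\le i'<p$ is the representative of $i$ modulo $p$. That position lies in the block $(v_0\cdots v_{n-1})^k v_0$. Hence the distinguished edge (resp.\ vertex) could never occur in $x^{(k)}$, a contradiction.
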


\begin{proof}
Let $C$ be an irreducible component of $X$.
Since $C$ is an irreducible subshift of finite type, its periodic configurations are dense in $C$ (see, e.g.,~\cite[Exercise~6.1.12]{lind-marcus-second} or
\cite[Exercise~3.35.(a)]{csc-ecag}). 
We deduce that $C$ is finite if it contains only  finitely many periodic configurations. 
\end{proof}

Every irreducible component $C$ of a subshift of finite type $X$ is closed in $X$.
By the following result,  $C$ is open (and hence clopen) in $X$ if and only if $C$ is isolated in the poset of irreducible components of $X$.

\begin{proposition}
\label{p:isolated-component}
Let $A$ be a finite set and let $X \subset A^{\Z}$ be a subshift of finite type.
Let $\CC$ denote the poset of irreducible components of $X$ and let $C \in \CC$.
Then the following conditions are equivalent:
\begin{enumerate}[\rm (a)]
\item
$C$ is isolated in the poset $\CC$;
\item
 $C$ is an open subset of $X$.
 \end{enumerate}
\end{proposition}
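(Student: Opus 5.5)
We reduce to a vertex subshift. A topological conjugacy $X \to X'$ onto a vertex subshift is a homeomorphism commuting with $\sigma$. It therefore maps $\NW(X)$ onto $\NW(X')$ by Proposition~\ref{p:properties-endo}.(i), applied to it and to its inverse. It sends irreducible subshifts to irreducible subshifts and preserves $\alpha$- and $\omega$-limit sets, by Proposition~\ref{p:properties-endo}.(ii) and (iv). Hence it induces a poset isomorphism between the posets of irreducible components (uniqueness in Proposition~\ref{p:irred-comp}.(iii)), and it preserves openness. So we may assume that $X$ is the vertex subshift of a finite directed graph $G=(V,E)$, and that $C = C_K$ for a path-component $K$ (Remark~\ref{r:irred-vertex-sft}).

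\emph{(b) $\Rightarrow$ (a).} Suppose $C$ is open and, say, $C \prec C'$ with $C' \neq C$. We show this is impossible; the case $C' \prec C$ is symmetric. By the construction in the proof of Proposition~\ref{p:nw-vertex-ss}.(v), there is $x \in X$ with initial path-component $K$ and terminal path-component $K' \neq K$. By Proposition~\ref{p:limi-set-vertex-ss}, $x \notin C$ while $\alpha(x) \subset C$, and $\alpha(x)$ is non-empty. Pick $y \in \alpha(x) \subset C$. Since $C$ is an open neighbourhood of $y$, we get $\sigma^{-n}(x) \in C$ for some $n \in \N$. Since $C$ is $\sigma$-invariant, this gives $x \in C$, a contradiction. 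Hence $C$ is both minimal and maximal, i.e.\ isolated.

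\emph{(a) $\Rightarrow$ (b).} Suppose $C$ is isolated, and consider the open set $U \coloneqq \{x \in X : x_0 \in K\}$, a finite union of cylinders. Clearly $C \subset U$; we show $U \subset C$. Let $x \in U$, so $K \in \Gamma(x)$. By Proposition~\ref{p:normal-form-irred}, $\Gamma(x)$ is a chain $K_0 \prec \dots \prec K_n$ containing $K$. If $n \geq 1$, then $K$ is comparable to, and distinct from, some other element of the chain. By Remark~\ref{r:irred-vertex-sft}, $C$ would then fail to be isolated. Hence $n=0$, so $x_i \in K$ for all $i$ by Proposition~\ref{p:normal-form-irred}.(iv), that is, $x \in C$. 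Therefore $C = U$ is open.

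The only delicate point is the reduction step. One must check that a conjugacy carries the partition $\CC$ and the relation $\preceq$ to the corresponding objects of the vertex subshift, which follows from the uniqueness statements in Proposition~\ref{p:irred-comp}. One could instead avoid the reduction in (b)$\Rightarrow$(a), since the limit-set argument works directly in $X$ via Proposition~\ref{p:irred-comp}.(v), (vi) and (viii).
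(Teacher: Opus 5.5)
Your proposal is correct and follows essentially the same approach as the paper. For (b)$\Rightarrow$(a), the paper also argues via $\alpha(x)\subset C$ for a configuration $x\notin C$ going from $C$ to $C'$, and it does so directly in $X$ without the reduction, as you note is possible. For (a)$\Rightarrow$(b), the paper uses the one-coordinate cylinder around each $x\in C$ together with Proposition~\ref{p:normal-form-irred}; your set $\{x\in X : x_0\in K\}$ is just the union of those cylinders.
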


\begin{proof}
Suppose that $C$ is not isolated in $\CC$.
This means that  there  exists $C' \in \CC$ such that either $C \prec C'$ or $C' \prec C$.
Suppose that $C \prec C'$ (the case when $C' \prec C$ is analogous).
It follows that there exists a configuration $x \in X$ such that $C_\alpha(x) = C$ and $C_\omega(x) = C'$.
Then $x \notin C$ and there exists a strictly increasing sequence $(n_k)_{k \in \N}$ of non-negative integers such that
the sequence $(\sigma^{-n_k}(x))_{k \in \N}$ converges to some point in $\alpha(x) \subset C_\alpha(x) = C$.
As $\sigma^n(x) \notin C$ for all $n \in \Z$ since $C$ is $\sigma$-invariant, this implies that $C$ is not open in $X$.
This shows that (b) implies (a).
\par
Conversely, suppose that $C$ is isolated in $\CC$ and 
let us show that $C$ is open in $X$.
We may assume that $X$ is the vertex subshift associated with a finite directed graph $G = (V,E)$.
Denote by  $K$ the path-component of $G$ associated with $C$.
Let $x \in C$.
Then $x_i \in K$ for all $i \in \Z$.
In particular, $x_0 \in K$.
Consider the open neighborhood  $U$ of $x$ in $X$ consisting of all configurations in $X$ which take the same value as $x$ at $0$.
If $y \in U$, then $y_0 \in K$.
As $K$ is isolated in the poset of path-components of $G$, 
we then deduce from Proposition~\ref{p:normal-form-irred} that $y_i \in K$ for all $i \in \Z$.
It follows that $y \in C$. Therefore $U \subset C$.
This shows that $C$ is open in $X$.
\end{proof}

\subsection{Characterization of isolated configurations}

\begin{proposition}
\label{p:isolated-configuration}
Let $A$ be a finite set, let $X \subset A^{\Z}$ be a subshift of finite type,
and let $x \in X$.
Denote by $\CC$ the poset of irreducible components of $X$. 
Then the following conditions are equivalent:
\begin{enumerate}[{\rm (a)}]
\item 
the configuration $x$ is isolated in $X$;
\item 
the irreducible components $C_\alpha(x)$ and $C_\omega(x)$ are both finite,
with $C_\alpha(x)$ minimal and  $C_\omega(x)$  maximal in $\CC$.
\end{enumerate}
\end{proposition}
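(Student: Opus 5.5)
We reduce to the case where $X$ is the vertex subshift associated with a finite directed graph $G = (V,E)$. This is legitimate because every subshift of finite type is topologically conjugate to such a subshift. A conjugacy preserves isolated points, and it induces a poset isomorphism between the posets of irreducible components that carries $C_\alpha(x)$, $C_\omega(x)$ to the corresponding components of the image point (Proposition~\ref{p:properties-endo}.(ii)). It also preserves the finiteness of components. Write $K$ and $K'$ for the initial and terminal path-components of $x$, so that $C_\alpha(x) = C_K$ and $C_\omega(x) = C_{K'}$ (Remark~\ref{r:irred-vertex-sft}). We then work with the normal form of Proposition~\ref{p:normal-form-irred}.

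\textbf{(b) $\Rightarrow$ (a).} Assume $K$ is minimal, $K'$ is maximal, and both $C_K$ and $C_{K'}$ are finite.
\begin{enumerate}[\rm (1)]
\item Pick $i_0 < j_0$ with $x_i \in K$ for all $i \le i_0$ and $x_j \in K'$ for all $j \ge j_0$; these exist by Proposition~\ref{p:normal-form-irred}.(iv),(v).
\item Let $U$ be the cylinder of configurations $y \in X$ with $y_{[i_0,j_0]} = x_{[i_0,j_0]}$. We claim $U = \{x\}$.
\item Take $y \in U$. Since $y_{i_0} \in K$, any earlier position $i < i_0$ has $y_i$ recurrent in a component $L \preceq K$, or $y_i$ transient.
\item Minimality of $K$ forces the initial component of $y$ to be $K$. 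If $y_i$ left $K$ at some $i < i_0$, then the chain $\Gamma(y)$, going backwards, would have to reach a component strictly below $K$: a transient stretch is finite, and before it there must be a component that is $\preceq K$ and different from $K$, hence strictly smaller, which is impossible. So $y_i \in K$ for all $i \le i_0$.
\item By Corollary~\ref{c:int-x-finite-irred-compo}, $y_{(-\infty,i_0]}$ is then determined by $y_{i_0} = x_{i_0}$, so it agrees with $x_{(-\infty,i_0]}$.
\item Symmetrically, maximality of $K'$ gives $y_{[j_0,\infty)} = x_{[j_0,\infty)}$.
\item Hence $y = x$, and $x$ is isolated.
\end{enumerate}

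\textbf{(a) $\Rightarrow$ (b).} We argue by contraposition: if (b) fails, we construct, for every $N$, a configuration $y \ne x$ with $y_{[-N,N]} = x_{[-N,N]}$. By the symmetry $i \mapsto -i$ (reversing the graph), it suffices to treat the left end, so assume either $C_K$ is infinite or $K$ is not minimal.

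\emph{Case $K$ not minimal.} Choose $L \prec K$ and a path from some $u \in L$ to some $v \in K$.
\begin{enumerate}[\rm (1)]
\item Choose $m < -N$ with $x_m \in K$ and $x_i \in K$ for all $i \le m$.
\item Since $K$ is strongly connected, there is a path $\pi$ from $v$ to $x_m$.
\item Define $y$ by: a periodic orbit of a cycle in $L$ on the far left, then the path from $u$ to $v$, then $\pi$ ending at position $m$, then $x_{[m,\infty)}$.
\item The initial component of $y$ is $L \ne K$, so $y \ne x$, while $y$ agrees with $x$ on $[-N,N]$.
\end{enumerate}

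\emph{Case $K$ minimal but $C_K$ infinite.} This is the main point.
\begin{enumerate}[\rm (1)]
\item By Proposition~\ref{p:finite-component-1-1}, $K$ is not a single simple cycle carrying all its edges, so there are two distinct cycles based at some vertex $w \in K$. Concretely, take either an edge of $K$ off a simple cycle through $w$, or two different simple cycles through $w$.
\item Equivalently, $C_K$ being infinite and irreducible, the left-infinite $K$-words ending at $x_m$ are not unique. The cleanest route: there exist two distinct paths in $K$ of the same length $\ell$ from $x_m$ back to $x_m$. This is obtained from two distinct cycles $c_1, c_2$ at a vertex reachable from and to $x_m$ by comparing $c_1^{\#|c_2|}$ and $c_2^{\#|c_1|}$; if these happen to coincide as words, we instead use $c_1c_2$ versus $c_2c_1$.
\item Let $\gamma$ be the cycle at $x_m$ actually followed by $x$ on $[m-\ell,m]$, extended if necessary by choosing $m$ with enough room. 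Let $\gamma' \ne \gamma$ be another cycle of the same length $\ell$ at $x_m$, available since there are at least two.
\item Define $y$ by replacing $x_{[m-\ell,m]}$ with $\gamma'$. The endpoints match, so $y \in X$ since $X$ is defined by forbidden words of length $2$.
\item Then $y \ne x$, and $y$ agrees with $x$ on $[-N,N]$.
\end{enumerate}

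The main obstacle is securing the step just described: producing, from infiniteness of $C_K$, two distinct equal-length cycles at the prescribed vertex $x_m$ and splicing one in. I would handle this via Proposition~\ref{p:finite-component-1-1}, which gives an edge or vertex of $K$ beyond a single simple cycle, together with strong connectivity of $K$.
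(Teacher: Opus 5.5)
Your reduction to vertex shifts, the direction (b)$\Rightarrow$(a), and the case where $K$ is not minimal are correct and essentially coincide with the paper's argument. The gap is in the case where $K$ is minimal and $C_K$ is infinite, at the splice. You replace $x_{[m-\ell,m]}$ by a cycle $\gamma'$ based at $x_m$ and assert that ``the endpoints match''. That requires $x_{m-\ell}=x_m$, i.e.\ that $x$ itself returns to $x_m$ after exactly $\ell$ steps. But $\ell$ was produced from the graph with no reference to $x$, and ``choosing $m$ with enough room'' does not address this.

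Here is a concrete failure. Take the golden mean shift (the vertex shift on $\{0,1\}$ with $11$ forbidden) and $x$ with left tail $\cdots 0101$. Choose a position $m$ in that tail with $x_m=1$, and let $\ell=5$ come from the two distinct cycles $100001$ and $101001$ at $1$. Then $x_{m-5}=0\neq x_m$ and $x_{m-6}=1$, so your splice creates the forbidden word $11$ on $[m-6,m-5]$ and $y\notin X$.

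There is also a smaller error in how you produce $\ell$. The fallback ``$c_1c_2$ versus $c_2c_1$'' is useless: if the two powers coincide, the underlying cycle words are powers of a common word and hence commute, so $c_1\#c_2=c_2\#c_1$ as well. Lemma~\ref{l:2-cycles-de-long-n} is what you want there.

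The surgery can be repaired by pigeonhole. Let $n$ be as in Lemma~\ref{l:2-cycles-de-long-n}. Pick a vertex $w$ occurring infinitely often in the part of the left tail of $x$ that lies in $K$, to the left of $-N$. Then choose positions $q<p<-N$ there with $x_q=x_p=w$ and $p-q=kn$ for some $k\geq 1$. Now $x_{[q,p]}$ really is a cycle at $w$. Concatenating $k$ blocks taken from two distinct length-$n$ cycles at $w$ gives $2^k\geq 2$ distinct cycles of length $kn$ at $w$. Splicing in one that differs from $x_{[q,p]}$ gives $y\in X$ with $y\neq x$, agreeing with $x$ off $[q,p]$.

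The paper never looks at the tail of $x$ at all. It discards $x_{(-\infty,i_0)}$ and grafts on, through a connecting path in $K$, the left tails of two periodic points of $C$ with distinct minimal periods (these exist by Proposition~\ref{p:finite-component-2}). The two resulting configurations agree with $x$ on $[-M,M]$ and have $\alpha$-limit sets of different cardinalities, so at least one of them differs from $x$. Your repaired version costs a pigeonhole step but gives the slightly stronger fact that $x$ is a limit of configurations homoclinic to it.
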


\begin{proof}
We may assume that $X$ is the vertex shift associated with a finite directed graph $G = (V,E)$.
To simplify, let us set $C \coloneqq C_\alpha(x)$ and $C' \coloneqq C_\omega(x)$.
Let $K$ (resp.~$K'$) denote the path-component of $G$ associated with $C$ (resp.~$C'$).
Since $K$ (resp.~$K'$) is the initial (resp.~terminal) path-component of $x$, there exist  $a,b \in \Z$ 
with $a \leq b$ such that $x_i \in K$ (resp.~$x_i \in K'$) for all  $i \leq a$ (resp.~$i \geq b$).
\par
Let us assume that Condition~(b) fails to hold. Let us show that $x$ is not isolated in $X$.
Suppose first that $C$ is infinite (the case where $C'$ is infinite is similar).
Let $M$ be a positive integer and set $i_0 \coloneqq \min(a,-M)$. 
Given $y \in \Per(C)$, we construct a configuration $z = z(y) \in V^{\Z}$ as follows.
Since $y_0, x_{i_0} \in K$ and $K$ is a path-component of $G$, there exist an integer $k \geq 1$ and a path $v_0 v_1 \cdots v_k$ of length $k$ in $K$ 
going from $v_0 = y_0$ to $v_k = x_{i_0}$. 
Denoting by $p(y)$ the minimal period of $y$, we then set, for every $i \in \Z$,
\begin{equation}
\label{e:x-z-isolated}
z_i \coloneqq 
\begin{cases} 
y_j & \mbox{ if } i \leq i_0 - k \mbox{ and } i \equiv i_0-k+j \mod p(y)\\
v_{i-i_0+k} & \mbox{ if } i_0-k \leq i \leq i_0 \\
x_i & \mbox{ if } i \geq i_0.
\end{cases}
\end{equation}
We clearly have  $z \in X$ and $z_{[-M,M]} = x_{[-M,M]}$.
Moreover, the $\alpha$-limit set $\alpha(z)$ of $z$ equals $\alpha(y) = \OO(y)$ and,
in particular, $\vert \alpha(z) \vert = \vert \alpha(y) \vert = \vert \OO(y) \vert = p(y)$
(cf.~Proposition~\ref{p:alpha-omega-periodiques}).
\par
Since $C$ is infinite, it contains infinitely many periodic configurations by Proposition~\ref{p:finite-component-2}.
Thus, there exist  $y,y' \in \Per(C)$ whose minimal periods $p(y)$ and $p(y')$ are distinct.
Denote by $z = z(y)$ and $z' = z(y')$ the configurations in $X$ associated with $y$ and $y'$ via~\eqref{e:x-z-isolated}.
As $\vert \alpha(z) \vert = p(y) \neq p(y') = \vert \alpha(z') \vert$, we have $z \neq z'$.
Therefore, we have $x \not= z$ or $x \not= z'$.
Since the positive integer $M$ was arbitrary and the three configurations $x,z$ and $z'$ satisfy $x_{[-M,M]} = z_{[-M,M]} = z'_{[-M,M]}$,
we deduce that $x$ is not isolated in $X$.
\par
Suppose now that the irreducible component $C$ is not  minimal  (the case where $C'$ is not maximal is similar).
Let $M$ be a positive integer and, as before, set $i_0 \coloneqq \min(a,-M)$. 
As $C$ is not minimal, there exists and irreducible component $C''$ of $X$  
such that  $C'' \prec C$. 
Let $y \in C''$.
Denoting by $K''$ the path-conponent of $G$ associated with $C''$, we have $K'' \prec K$.
Consequently, there exists an integer $k \geq 1$ and a path $v_0 v_1 \cdots v_k$ of length $k$ in $G$
going from $v_0 = y_0$ to $v_k = x_{i_0}$. 
Consider the configuration $z \in V^{\Z}$ defined, for all $i \in \Z$,  by 
\begin{equation*}
z_i \coloneqq 
\begin{cases} 
y_{i - i_0 + k} & \mbox{ if } i \leq i_0 - k \\
v_{i-i_0+k} & \mbox{ if } i_0-k \leq i \leq i_0 \\
x_i & \mbox{ if } i \geq i_0.
\end{cases}
\end{equation*}
Note that $z \in X$.
We have  $z \neq x$ since $C_\alpha(z) = C'' \neq C = C_\alpha(x)$.
As $z_{[-M,M]} = x_{[-M,M]}$ 
and the positive integer $M$ was arbitrary, this shows that $x$ is not isolated.
The implication (a) $\implies$ (b) follows.
\par
Conversely, assume (b). Let us show that $x$ is isolated in $X$.
Let $y \in X$ such that $y_{[a,b]} = x_{[a,b]}$. 
Using the fact that  $K$ is minimal (resp.~$K'$ is maximal), 
we get $x_i, y_i \in K$ (resp.~$x_i,y_i \in K'$) for all $i \leq a$ (resp.~$i \geq b$).
As $C$ (resp.~$C'$) is finite and $x_a = y_a$ (resp.~$x_b = y_b$)
we  deduce from Corollary~\ref{c:int-x-finite-irred-compo} 
that $x_i = y_i$ for all $i \leq a$ (resp.~$i \geq b$).
Since $x_{[a,b]} = y_{[a,b]}$, we have that $x = y$.
 As the set of configurations in $X$ which coincide with $x$ on $[a,b]$ is a neighborhood of $x$,
we conclude that  $x$ is isolated in $X$.
This proves (b) $\implies$ (a).
\end{proof}

\begin{corollary}
\label{c:inf-component-uncountable}
Let $A$ be a finite set, let $X \subset A^{\Z}$ be a subshift of finite type,
and let $C$ be an infinite irreducible component of $X$.
Then $C$ is a perfect space.
Consequently, $C$ is homeomorphic to the Cantor set and hence uncountable.
\end{corollary}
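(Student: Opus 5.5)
The plan is to show that no point of $C$ is isolated in $C$, using the criterion of Proposition~\ref{p:isolated-configuration} applied not to $X$ but to the subshift $C$ itself. By Proposition~\ref{p:irred-comp}.(iv), $C$ is a subshift of finite type. Since $C$ is irreducible, it is non-wandering. Therefore $\NW(C) = C$, and the uniqueness statement in Proposition~\ref{p:irred-comp}.(iii) shows that the poset of irreducible components of $C$ is the one-element poset $\{C\}$.

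Now take any $x \in C$. Then $C_\alpha(x) = C_\omega(x) = C$, computed within $C$. The single element $C$ is trivially both minimal and maximal, but it is infinite. So condition (b) of Proposition~\ref{p:isolated-configuration} fails for $x$, and that proposition (applied to the subshift of finite type $C$) tells us $x$ is not isolated in $C$. Hence $C$ has no isolated points. It is also non-empty and closed in the compact space $A^{\Z}$, so $C$ is perfect.

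To conclude, $C$ is a non-empty, compact, metrizable, totally disconnected (as a subspace of $A^{\Z}$) and perfect space. By Brouwer's characterization it is homeomorphic to the Cantor set, and in particular it is uncountable. Alternatively, uncountability follows directly from the Cantor--Bendixson theorem: a non-empty perfect Polish space is uncountable.

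The only delicate point is legitimately applying Proposition~\ref{p:isolated-configuration} inside $C$ rather than in $X$. This needs $C$ to be of finite type, which Proposition~\ref{p:irred-comp}.(iv) gives, and its component poset to be $\{C\}$, as argued above. If one prefers to avoid this, there is a direct alternative that repeats the first half of the proof of Proposition~\ref{p:isolated-configuration} inside the vertex subshift of the path-component $K$. One builds, for every $M$, configurations $z(y), z(y') \in C$ agreeing with $x$ on $[-M,M]$ and having $\alpha$-limit sets of different cardinalities. This works because all the connecting paths used there lie in $K$, and by Proposition~\ref{p:finite-component-2} $C$ contains periodic points of distinct minimal periods.
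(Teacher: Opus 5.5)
Your proof is correct and is essentially the argument the paper intends. The paper states this corollary without proof as an immediate consequence of Proposition~\ref{p:isolated-configuration}, applied to $C$ itself: $C$ is an irreducible subshift of finite type whose only irreducible component is $C$, so every $x \in C$ fails condition (b). Brouwer's characterization of the Cantor set then finishes the argument, and you were right to apply the proposition inside $C$ rather than in $X$ (or, equivalently, to note that the approximating configurations built in its proof lie in $C$), since that is what makes the deduction legitimate.
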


\begin{proposition}
\label{p:countable-sft}
Let $A$ be a finite set and let $X \subset A^{\Z}$ be a subshift of finite type.
Then $X$ is countable if and only if every irreducible component of $X$ is finite.
\end{proposition}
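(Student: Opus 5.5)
We reduce to the case where $X$ is the vertex subshift associated with a finite directed graph $G = (V,E)$. This is legitimate because $X$ is topologically conjugate to such a vertex subshift. A conjugacy is a homeomorphism, so it preserves countability, and it maps irreducible components onto irreducible components, preserving their cardinalities. By Remark~\ref{r:irred-vertex-sft}, the irreducible components are then the subshifts $C_K$, $K \in \KK$.

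For the forward implication we argue by contraposition. Suppose some irreducible component $C$ of $X$ is infinite. By Corollary~\ref{c:inf-component-uncountable}, $C$ is homeomorphic to the Cantor set and hence uncountable. Since $C \subset X$, the subshift $X$ is uncountable. (Alternatively, one could use Proposition~\ref{p:finite-component-2} and build uncountably many concatenations of two distinct cycles in $K$, but invoking the corollary suffices.)

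For the converse, suppose every irreducible component of $X$ is finite. The plan is to show that each $x \in X$ is determined by a finite amount of data ranging over a countable set, so that $X$ injects into a countable set. Take $x \in X$ with chain of path-components $K_0 \prec \dots \prec K_n$. By Proposition~\ref{p:normal-form-irred}, $x$ is determined by the following data:
\begin{itemize}
\item the integers $b_0 < a_1 \le \dots < a_n$ (or nothing if $n = 0$);
\item the finitely many transient values $x_i$ lying outside the intervals $(-\infty,b_0]$, $[a_j,b_j]$, $[a_n,\infty)$;
\item the values of $x$ on each of these intervals.
\end{itemize}
Every $K_j$ is the path-component of a finite irreducible component, so Corollary~\ref{c:int-x-finite-irred-compo} applies: the restriction of $x$ to the interval where it lies in $K_j$ is determined by a single value $x_{i_0}$ at one chosen point $i_0$ of that interval. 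For instance, one can take $i_0 = b_0$ for $K_0$, $i_0 = a_n$ for $K_n$, and $i_0 = a_j$ otherwise. When $n = 0$, $x$ is determined by $x_0$ alone.

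It follows that $x$ is determined by a finite tuple of integers together with a finite word over $V$. Such tuples (finite integer sequences paired with finite words over $V$) form a countable set, and the map sending $x$ to its tuple is injective. Hence $X$ is countable.

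The only genuinely delicate point is making this encoding precise: the transient positions are exactly the complement of the union of the intervals, so the tuple $(n, b_0, a_1, \dots, a_n, x_{[b_0,a_n]})$ already suffices. Given these data, Corollary~\ref{c:int-x-finite-irred-compo} reconstructs the left tail from $x_{b_0}$ and the right tail from $x_{a_n}$, and the middle segment is given explicitly.
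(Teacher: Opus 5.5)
Your proof is correct and follows essentially the same route as the paper. The forward direction is Corollary~\ref{c:inf-component-uncountable}. For the converse, when $C_\alpha(x) \prec C_\omega(x)$, Corollary~\ref{c:int-x-finite-irred-compo} recovers the tails $x_{(-\infty,b_0]}$ and $x_{[a_n,\infty)}$ from $x_{b_0}$ and $x_{a_n}$, so only finitely many configurations share a given pair $(b_0,a_n)$. The additional data you first list (the intermediate $a_j, b_j$ and the transient values) is redundant, as you note yourself at the end.
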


\begin{proof}
If $X$ contains an infinite component then $X$ is uncountable by Corollary~\ref{c:inf-component-uncountable}.
Thus, the condition is necessary.
\par
Conversely, suppose  that every irreducible component of $X$ is finite and let us show that $X$ is countable.
We may assume that $X$ is the vertex subshift associated with a finite directed graph $G = (V,E)$.
Let $x \in X$. Set $C \coloneqq C_\alpha(x)$ and $C' \coloneqq C_\omega(x)$.
If $C = C'$ then $x \in C$.
Otherwise, we have $C \prec C'$ and if the integers $b_0 < a_n$ are as in Proposition~\ref{p:normal-form-irred},
it follows from Corollary~\ref{c:int-x-finite-irred-compo} that
$x_{(-\infty,b_0]}$ (resp.~$x_{[a_n,\infty)}$) is entirely determined by the value of $x_{b_0}$ (resp.~$x_{a_n}$).
We deduce that there are only finitely many $x \in X$ for a fixed pair $(b_0,a_n)$.
 This implies that $X$ is countable.
\end{proof}

\subsection{Action of endomorphisms on irreducible components}

\begin{proposition}
\label{p:ca-sft-irred-compo}
Let $A$ be a finite set, let $X \subset A^{\Z}$ be a subshift of finite type,
and let $\CC$ denote the poset of irreducible components of $X$.
Suppose that  $\tau \colon X \to X$ is an endomorphism of $X$.
Then the following hold:
\begin{enumerate}[\rm (i)]
\item
there exists a unique map $\rho = \rho_\tau \colon \CC \to \CC$ such that,
for every $C \in \CC$, one has
$\tau(C) \subset \rho(C)$;
\item
for every $x \in X$, one has 
$\rho(C_\alpha(x)) = C_\alpha(\tau(x))$ and 
$\rho(C_\omega(x)) = C_\omega(\tau(x))$;
\item
the map $\rho \colon \CC \to \CC$ is a poset endomorphism of $\CC$.
\end{enumerate}
Moreover, the map $\End(X,\sigma) \to \End(\CC,\preceq)$, $\tau \mapsto \rho_\tau$, is a monoid morphism.
\end{proposition}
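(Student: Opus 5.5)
For (i), I would take $C \in \CC$ and look at $\tau(C)$. Since $\tau$ commutes with $\sigma$ and is continuous, $\tau(C)$ is a subshift of $X$. It is irreducible by Proposition~\ref{p:properties-endo}.(iv), because $C$ is irreducible. Proposition~\ref{p:irred-comp}.(ii) then gives a unique $C' \in \CC$ with $\tau(C) \subset C'$, and I set $\rho(C) \coloneqq C'$. Uniqueness of $\rho$ holds because the components are pairwise disjoint and $\tau(C)$ is non-empty, so $\tau(C)$ lies in at most one component.

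For (ii), I would fix $x \in X$ and apply Proposition~\ref{p:properties-endo}.(ii), which gives $\alpha(\tau(x)) = \tau(\alpha(x))$. Since $\alpha(x) \subset C_\alpha(x)$, this yields $\alpha(\tau(x)) \subset \tau(C_\alpha(x)) \subset \rho(C_\alpha(x))$. The set $\alpha(\tau(x))$ is non-empty, so the uniqueness clause in Proposition~\ref{p:irred-comp}.(v) forces $C_\alpha(\tau(x)) = \rho(C_\alpha(x))$. The $\omega$ case is handled the same way.

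For (iii), suppose $C \preceq C'$. By the definition in Proposition~\ref{p:irred-comp}.(viii) there is $x \in X$ with $C_\alpha(x) = C$ and $C_\omega(x) = C'$. By (ii), $\tau(x)$ satisfies $C_\alpha(\tau(x)) = \rho(C)$ and $C_\omega(\tau(x)) = \rho(C')$, and so $\rho(C) \preceq \rho(C')$. Reflexive pairs cause no trouble, since $x \in C$ already witnesses $C \preceq C$.

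For the monoid morphism claim, $\Id_X(C) = C$ gives $\rho_{\Id_X} = \Id_\CC$. For endomorphisms $\tau, \tau'$ we have $(\tau' \circ \tau)(C) \subset \tau'(\rho_\tau(C)) \subset \rho_{\tau'}(\rho_\tau(C))$, so uniqueness in (i) gives $\rho_{\tau' \circ \tau} = \rho_{\tau'} \circ \rho_\tau$. I do not expect a real obstacle. The only point needing care is that the uniqueness statements genuinely apply, which relies on non-emptiness of $\tau(C)$ and of the limit sets, together with disjointness of the components.
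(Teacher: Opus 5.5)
Your proof is correct and follows essentially the same route as the paper: irreducibility of $\tau(C)$ together with Proposition~\ref{p:irred-comp} for (i), the identity $\tau(\alpha(x)) = \alpha(\tau(x))$ combined with non-emptiness of limit sets and disjointness of the components for (ii), and uniqueness in (i) for the monoid property. The only minor difference is that you deduce (iii) directly from (ii), whereas the paper repeats the limit-set inclusions there; this is a small streamlining, not a different argument.
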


\begin{proof}
Let $C \in \CC$.
As $\tau$ is an endomorphism of $(X,\sigma)$, we deduce from  
Proposition~\ref{p:properties-endo}.(iv)  that   $\tau(C) \subset X$ is an irreducible subshift.
It then follows from Proposition~\ref{p:irred-comp}.(iii)   that there exists a unique $C' \in \CC$ such that $\tau(C) \subset C'$.
This proves (i).
\par
Let $x \in X$.
We have  $\tau(\alpha(x)) = \alpha(\tau(x))$ and $\tau(\omega(x)) = \omega(\tau(x))$
by Proposition~\ref{p:properties-endo}.(ii).
We deduce that
\[
\tau(\alpha(x)) = \alpha(\tau(x)) \subset C_\alpha(\tau(x))
\]
and
\[
\tau(\omega(x)) = \omega(\tau(x)) \subset C_\omega(\tau(x)).
\]
On the other hand, we have
$\varnothing \not= \alpha(\tau(x)) \subset C_\alpha(\tau(x))$ and $\varnothing \not= \omega(\tau(x)) \subset C_\omega(\tau(x))$.
Using uniqueness in (i),
we deduce that
$\tau(C_\alpha(x)) \subset C_\alpha(\tau(x))$ and 
$\tau(C_\omega(x)) \subset C_\omega(\tau(x))$. 
This shows (ii).
  \par
To prove (iii), suppose that  $C,C' \in \CC$ are such that $C \preceq C'$. 
This means that there exists $x \in X$ such that
$\alpha(x) \subset C$ and $\omega(x) \subset C'$.
Using (ii), we get
\[
\alpha(\tau(x)) = \tau(\alpha(x)) \subset \tau(C) \subset \rho(C) 
\]
and
\[
\omega(\tau(x)) = \tau(\omega(x)) \subset \tau(C') \subset \rho(C'). 
\]
We deduce that  $\rho(C) \preceq \rho(C')$.
This shows  (iii).
\par
We clearly have $\rho_{\Id_X} = \Id_{\CC}$.
On the other hand, let $\tau_1,\tau_2 \in \End(X,\sigma)$.
For every $C \in \CC$, we have
\[
(\tau_1\tau_2)(C) = \tau_1(\tau_2(C)) \subset \rho_{\tau_1}(\tau_2(C)) \subset \rho_{\tau_1}(\rho_{\tau_2}(C)) = (\rho_{\tau_1} \rho_{\tau_2})(C).
\]
Therefore $\rho_{\tau_1\tau_2} = \rho_{\tau_1} \rho_{\tau_2}$.
This shows that $\tau \mapsto \rho_\tau$ is a monoid morphism from $\End(X,\sigma)$ to $\End(\CC,\preceq)$.
\end{proof}

\begin{proposition}
\label{p:surj-tau-implies-surj-rho}
Let $A$ be a finite set, let $X \subset A^{\Z}$ be a subshift of finite type,
and let $\CC$ denote the poset of irreducible components of $X$.
Suppose that  $\tau \colon X \to X$ is a surjective endomorphism of $X$
and let $\rho = \rho_\tau \colon \CC \to \CC$ denote the poset endomorphism of $\CC$ associated with $\tau$.
Then the following hold:
\begin{enumerate}[\rm (i)]
\item
the map  $\rho \colon \CC \to \CC$  is a poset automorphism  of $\CC$;
\item
one has  $\tau^{-1}(C) = \rho^{-1}(C)$ for each $C \in \CC$;
\item
one has $\tau(C) = \rho(C)$ for each $C \in \CC$;
\item
one has $\tau(\NW(X)) = \NW(X)$ and $\tau^{-1}(\NW(X)) = \NW(X)$;
\item
one has $\tau(X \setminus \NW(X)) = X \setminus \NW(X)$ and $\tau^{-1}(X \setminus \NW(X)) = X \setminus \NW(X)$.
\end{enumerate}
\end{proposition}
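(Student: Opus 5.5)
The plan is to show first that $\rho$ is surjective; since $\CC$ is finite, $\rho$ is then a bijection. Then we show that a bijective poset endomorphism of a finite poset has an order-preserving inverse, and derive (ii)--(v) from the fact that every $\tau$-preimage of a point of $C$ must lie in $\rho^{-1}(C)$.

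\emph{Surjectivity of $\rho$.} Let $C \in \CC$ and pick $y \in C$. By surjectivity of $\tau$ there is $x \in X$ with $\tau(x) = y$. By Proposition~\ref{p:ca-sft-irred-compo}.(ii), $\rho(C_\alpha(x)) = C_\alpha(y) = C$, using Proposition~\ref{p:irred-comp}.(vi) for $y \in C$. So $C$ lies in the image of $\rho$, and $\rho$ is a bijection of the finite set $\CC$.

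\emph{$\rho$ is an automorphism.} $\rho$ is an order-preserving bijection of a finite poset, so it induces an injective map on the finite set $\{(C,C') : C \preceq C'\}$ into itself. An injective self-map of a finite set is surjective, so for every relation $D \preceq D'$ there are $C \preceq C'$ with $(\rho(C),\rho(C')) = (D,D')$. Since $\rho$ is injective, $C = \rho^{-1}(D)$ and $C' = \rho^{-1}(D')$. Hence $\rho^{-1}$ is order-preserving, which gives (i).

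\emph{Assertions (ii) and (iii).} For (ii), take $x$ with $\tau(x) \in C$. Then $\rho(C_\alpha(x)) = C_\alpha(\tau(x)) = C$ and $\rho(C_\omega(x)) = C$, so by injectivity $C_\alpha(x) = C_\omega(x) = \rho^{-1}(C)$. By Proposition~\ref{p:irred-comp}.(vi), $x \in \rho^{-1}(C)$, so $\tau^{-1}(C) \subset \rho^{-1}(C)$. Conversely, $\tau(\rho^{-1}(C)) \subset \rho(\rho^{-1}(C)) = C$ by Proposition~\ref{p:ca-sft-irred-compo}.(i), which gives equality. For (iii), surjectivity of $\tau$ gives $C = \tau(\tau^{-1}(C)) = \tau(\rho^{-1}(C))$. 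Replacing $C$ by $\rho(C)$ yields $\tau(C) = \rho(C)$.

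\emph{Assertions (iv) and (v).} Since $\NW(X) = \bigcup_{C \in \CC} C$ and $\rho$ is a bijection, taking unions in (ii) and (iii) gives $\tau^{-1}(\NW(X)) = \NW(X)$ and $\tau(\NW(X)) = \NW(X)$. Taking complements in the preimage identity gives $\tau^{-1}(X \setminus \NW(X)) = X \setminus \NW(X)$. Surjectivity of $\tau$ then gives $\tau(X \setminus \NW(X)) = \tau(\tau^{-1}(X\setminus \NW(X))) = X \setminus \NW(X)$.

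The main subtlety is the inverse-monotonicity in (i), which needs the finiteness counting argument on comparable pairs. The remaining steps are direct bookkeeping with Proposition~\ref{p:ca-sft-irred-compo}.(ii) and Proposition~\ref{p:irred-comp}.(vi).
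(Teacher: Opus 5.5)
Your proof is correct and follows essentially the same route as the paper. You get surjectivity of $\rho$ from $C_\alpha$ of a $\tau$-preimage, then $\tau^{-1}(C)=\rho^{-1}(C)$ via $C_\alpha(x)=C_\omega(x)=\rho^{-1}(C)$, then (iii)--(v) from surjectivity of $\tau$. Your one addition is the counting argument on comparable pairs showing that $\rho^{-1}$ is order-preserving, which the paper leaves implicit when it deduces (i) from bijectivity and Proposition~\ref{p:ca-sft-irred-compo}.(iii).
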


\begin{proof}
Let $C \in \CC$ and let $y \in C$. 
We have  $C_\alpha(y)  = C$ by Proposition~\ref{p:irred-comp}.(vi).
Since $\tau \colon X \to X$ is surjective, there exists  $x \in X$ such that $\tau(x) = y$.
Let  $C' \coloneqq C_\alpha(x) \in \CC$.
Using Proposition~\ref{p:ca-sft-irred-compo}.(ii), we get  
\[
\tau(C') = \tau(C_\alpha(x)) \subset C_\alpha(\tau(x)) = C_\alpha(y) = C, 
\]
which implies  $\rho(C') = C$.
It follows that   $\rho$ is surjective. 
As $\CC$ is finite, $\rho$ is bijective.
Using Proposition~\ref{p:ca-sft-irred-compo}.(iii), we deduce that $\rho$ is a poset automorphism of $\CC$.
This shows (i).
\par
To prove (ii), let $C \in \CC$ and let  $C' \coloneqq  \rho^{-1}(C)$.
Since $\rho(C') = C$, we have $\tau(C') \subset C$
and hence
$C' \subset \tau^{-1}(C)$.
\par
To prove the converse inclusion,
 suppose  that $x \in \tau^{-1}(C)$.
Then $y \coloneqq \tau(x) \in C$,
so that $C_\alpha(y) = C_\omega(y) = C$ by Proposition~\ref{p:irred-comp}.(vi).
It follows from Proposition~\ref{p:ca-sft-irred-compo}.(ii) that
\[
\tau(C_\alpha(x)) \subset C_\alpha(\tau(x)) = C_\alpha(y) = C,
\]
and
\[
\tau(C_\omega(x)) \subset C_\omega(\tau(x)) = C_\omega(y) = C. 
\]
Therefore, we have $\rho(C_\alpha(x)) = \rho(C_\omega(x)) = C$.
As $\rho$ is bijective by (i),
this implies  $C_\alpha(x) = C_\omega(x) = \rho^{-1}(C) = C'$.
We deduce that $x \in C'$ by applying again Proposition~\ref{p:irred-comp}.(vi).
This shows that $\tau^{-1}(C) \subset C'$and completes the proof of (ii). 
\par
To prove (iii), let $C \in \CC$ and let $C' \coloneqq \rho(C)$.
We then have $\tau(C) \subset C'$ by definition of $\rho$.
As $\tau^{-1}(C') = \rho^{-1}(C') = C$ by (ii),
we conclude that $\tau(C) = C'$. This shows (iii).
\par
We have
\begin{align*}
\tau^{-1}(\NW(X))
&= \tau^{-1}\left(\bigcup_{C \in \CC} C\right) && \text{(by Proposition~\ref{p:irred-comp}.(ii))} \\
&= \bigcup_{C \in \CC} \tau^{-1}(C) \\  
&= \bigcup_{C \in \CC} \rho^{-1}(C) && \text{(by (ii))} \\
&= \bigcup_{C \in \CC} C && \text{(since $\rho$ permutes $\CC$)} \\
&= \NW(X) && \text{(by Proposition~\ref{p:irred-comp}.(ii)).}
\end{align*}
As $\tau$ is surjective, we deduce that $\tau(\NW(X)) = \NW(X)$. This shows (iv).
\par
Assertion (v)  follows from (iv) and the surjectivity of $\tau$.
\end{proof}

\begin{proposition}
\label{p:inj-tau-implies-inj-rho}
Let $A$ be a finite set, let $X \subset A^{\Z}$ be a subshift of finite type,
and let $\CC$ denote the poset of irreducible components of $X$.
Suppose that $\tau \colon X \to X$ is an injective endomorphism of $X$
and let $\rho = \rho_\tau \colon \CC \to \CC$ denote the poset endomorphism of $\CC$ associated with $\tau$.
Then the following hold:
\begin{enumerate}[\rm (i)]
\item
one has $\tau(\NW(X)) = \NW(X)$
and $\tau$ induces by restriction a topological conjugacy of $\NW(X)$ onto itself;
\item
the map $\rho \colon \CC \to \CC$ is a poset automorphism of $\CC$;
\item
for each $C \in \CC$,
one has $\tau(C) = \rho(C)$ and $\tau$ induces by restriction a topological conjugacy of $C$ onto $\rho(C)$;
\item
one has $\tau(X \setminus \NW(X)) \subset X \setminus \NW(X)$ and
$\tau^{-1}(X \setminus \NW(X)) = X \setminus \NW(X)$;
\item
$\tau$ is surjective if and only if
$\tau(X \setminus \NW(X)) = X \setminus \NW(X)$ 
\end{enumerate}
\end{proposition}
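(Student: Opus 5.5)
We first show that $\rho$ is injective, which gives (ii) at once. Let $C, C' \in \CC$ with $\rho(C) = \rho(C') = D$. By Proposition~\ref{p:ca-sft-irred-compo}.(i), both $\tau(C)$ and $\tau(C')$ lie in $D$. If $C \neq C'$, then $C$ and $C'$ are disjoint infinite-or-finite irreducible subshifts, so we need a contradiction with the injectivity of $\tau$.

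The natural tool is counting periodic points. For each $n \geq 1$, the set $\Per_n(D)$ of points of $D$ fixed by $\sigma^n$ is finite. Since $\tau$ commutes with $\sigma$ and is injective, it maps $\Per_n(C) \sqcup \Per_n(C')$ injectively into $\Per_n(D)$. It therefore suffices to show that $|\Per_n(D)| \leq |\Per_n(E)|$ for infinitely many $n$, where $E$ is the component with $\rho(E) = D$ maximising $|\Per_n|$; this is awkward to arrange directly.

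A cleaner route is to argue globally. Consider $N_n \coloneqq |\Per_n(X)|$. All periodic points lie in $\NW(X) = \bigsqcup_{C \in \CC} C$. The map $\tau$ sends $\Per_n(X)$ injectively into itself, and this set is finite, so $\tau$ restricts to a bijection of $\Per_n(X)$ for every $n$. Hence $\tau(\Per(X)) = \Per(X)$.

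Now let $D \in \CC$. Periodic points are dense in $D$ by the argument in Proposition~\ref{p:finite-component-2}. Pick $y \in \Per(D)$ and write $y = \tau(x)$ with $x$ periodic, say $x \in C$. Then $\rho(C) = D$, so $\rho$ is surjective, hence bijective since $\CC$ is finite. Order preservation comes from Proposition~\ref{p:ca-sft-irred-compo}.(iii). For $\rho^{-1}$ to be order-preserving we need a little more: a bijective order-preserving self-map of a finite poset is an automorphism. Indeed, it permutes the finite set of comparable pairs injectively, hence surjectively. This gives (ii).

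For (iii), fix $C \in \CC$ and set $D \coloneqq \rho(C)$. The periodic points of $D$ are all images of periodic points. By injectivity of $\rho$, these preimages lie in $C$, because every periodic point lies in some component $C''$ with $\rho(C'') = D$, and that component must be $C$. Thus $\Per(D) \subset \tau(C)$. The set $\tau(C)$ is compact, hence closed, and $\Per(D)$ is dense in $D$, so $D \subset \tau(C) \subset D$. Then $\tau|_C \colon C \to D$ is a continuous bijection between compact metrizable spaces that commutes with $\sigma$, and therefore a conjugacy. Taking the union over $C$ gives (i).

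For (iv), the inclusion $\tau(X \setminus \NW(X)) \subset X \setminus \NW(X)$ follows from injectivity together with $\tau(\NW(X)) = \NW(X)$. If $x \notin \NW(X)$ and $\tau(x) \in \NW(X)$, then $\tau(x) = \tau(x')$ for some $x' \in \NW(X)$, contradicting injectivity. The equality $\tau^{-1}(X \setminus \NW(X)) = X \setminus \NW(X)$ follows from the same fact: a point lies in $\tau^{-1}(\NW(X))$ exactly when it lies in $\NW(X)$.

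Finally, (v) follows immediately from (i) and (iv), since $X$ is the disjoint union of $\NW(X)$ and its complement.

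The main step is (iii). Its key inputs are the bijectivity of $\tau$ on the finite sets $\Per_n(X)$ and the density of periodic points in irreducible subshifts of finite type.
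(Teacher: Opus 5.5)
Your proof is correct, but it runs in the opposite direction from the paper's.

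\textbf{The paper's route.} The paper proves (i) first. By Proposition~\ref{p:properties-endo}.(i), $\tau$ restricts to an injective endomorphism of $\NW(X)$. This is a non-wandering subshift of finite type, so it is surjunctive by an external result (Corollary~1.4 of \cite{csc-goe-smale}, surjunctivity of non-wandering Smale systems). Hence $\tau|_{\NW(X)}$ is a conjugacy. Items (ii) and (iii) are then read off from Proposition~\ref{p:surj-tau-implies-surj-rho} applied to this surjective restriction.

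\textbf{Your route.} You prove (ii) and (iii) first, by an elementary argument.
\begin{enumerate}
\item $\tau$ permutes each finite set $\Per_n(X)$, so every periodic point of a component $D$ has a periodic preimage.
\item This forces $\rho$ to be onto, hence bijective.
\item It also puts all those preimages in the unique $C$ with $\rho(C)=D$.
\item Density of $\Per(D)$ in $D$ and closedness of $\tau(C)$ then give $\tau(C)=D$.
\item Item (i) follows by taking unions.
\end{enumerate}

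\textbf{What each route buys.} Your argument is self-contained. Its only input is density of periodic points in irreducible subshifts of finite type, which the paper already uses in Proposition~\ref{p:finite-component-2}. In effect it reproves the surjunctivity of $\NW(X)$ that the paper imports: the image of $\NW(X)$ contains the dense set $\Per(X)$ and is closed.

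You also check explicitly that $\rho^{-1}$ is order-preserving, via the injective self-map of the finite set of comparable pairs. The paper leaves this implicit. Its appeal to Proposition~\ref{p:surj-tau-implies-surj-rho} is made for $\NW(X)$, whose own component poset is an antichain. So that appeal only yields bijectivity of $\rho$, and compatibility with the order on $\CC$ inherited from $X$ still needs Proposition~\ref{p:ca-sft-irred-compo}.(iii) plus the finite-poset fact you state. The paper's route, for its part, is shorter given the citation and is phrased in the Smale-system framework of \cite{csc-goe-smale}.

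\textbf{Cleanup.} Your first two paragraphs are an abandoned attempt: injectivity of $\rho$ is never obtained that way, but later from surjectivity plus finiteness. They should be deleted, so that the proof begins at ``A cleaner route''. When you deduce $\rho(C)=D$ from $\tau(x)=y$, say explicitly that it uses pairwise disjointness of the components together with $\tau(C)\subset\rho(C)$.
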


\begin{proof}
It follows from Proposition~\ref{p:properties-endo}.(i) that 
$\tau(\NW(X)) \subset \NW(X)$.
Thus, $\tau$ induces, by restriction, an injective endomorphism
$\varphi \coloneqq \tau \vert_{\NW(X)} \colon \NW(X) \to \NW(X)$.
The subshift $\NW(X)$ is of finite type by Proposition~\ref{p:irred-comp}.(i)
and non-wandering by definition.
Therefore $\NW(X)$  is surjunctive by Corollary~1.4 in~\cite{csc-goe-smale}. 
It follows that $\varphi$ is surjective.
We deduce that $\varphi \colon \NW(X) \to \NW(X)$ is 
a topological conjugacy from $\NW(X)$ onto itself. This shows (i).
\par
Observe that the set of irreducible components of $\NW(X)$ is the same as that of $X$, namely $\CC$. 
We deduce from (i) that $\tau$ permutes the elements of $\CC$. 
It then follows from  Assertion (iii)  in Proposition~\ref{p:surj-tau-implies-surj-rho}
that $\rho$ is a poset automorphism of $\CC$.
Moreover,  $\tau$ induces by restriction
a topological conjugacy of $C$ onto $\rho(C)$ for every $C \in \CC$. 
This shows (ii) and (iii).
\par
Finally, (iv) and (v) follow from (i) and injectivity of $\tau$.
\end{proof}

\section{Markers and injunctivity}
\label{sec:markers}

The introduction of markers in symbolic dynamics goes back to Hedlund~\cite{hedlund} who used them in his investigation of  the automorphism group of full shifts.
The technique of markers  was extended by Boyle, Lind, and Rudolph~\cite{boyle-lind-rudolph}
and by Kim and Roush~\cite{kim-roush}
to study the automorphism group of topologically mixing subshifts of finite type (see also~\cite[Chapter~3]{kitchens}).
In this section, we use markers to construct endomorphisms of  subshifts of finite type having an infinite irreducible component
in order to show that  these subshifts are not injunctive.
For our purpose, we define a marker in the following way.
 
\begin{definition}
\label{def:marker}
Let $A$ be a set.
One says that a pair of words $(w_0,w_1) \in A^* \times A^*$ is a \emph{marker} if the following conditions are satisfied:
\begin{enumerate}[\rm (M1)]
\item
one has $w_0,w_1 \in A^n$ for some $n  \geq 3$;
\item
one has $w_0 \not= w_1$;
\item
one has $\pref_1(w_0) = \suff_1(w_0) = \pref_1(w_1) = \suff_1(w_1)$;
\item
for all $k \in \{2,\dots,n - 1\}$ and  $r,s \in \{0,1\}$,
one has $\suff_k(w_r) \not= \pref_k(w_s)$. 
\end{enumerate}
\end{definition}

Condition (M4) says that, for all $r,s \in \{0,1\}$,  the words $w_r$ and $w_s$ have only trivial overlaps.

\begin{example}
Let $A \coloneqq \{0,1\}$. Then the pair $(0100,0110)$ is a marker
while the pair $(0100,0010)$ is not a marker.
\end{example}

\begin{lemma}
\label{l:marking-exist}
Let $G = (V,E)$  be a finite directed graph and let $X \subset V^{\Z}$ denote the subshift of finite type associated with $G$.
Suppose that the subshift $X$ contains an infinite irreducible component $C$ and let $K$ denote the path-connected component of $G$ associated with $C$.
Then there exists a marker $(w_0,w_1) \in V^* \times V^*$ such that $w_0$ and $w_1$ are both cycles in $K$. 
\end{lemma}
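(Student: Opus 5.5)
The plan is to build $w_0$ and $w_1$ as concatenations of \emph{first-return loops} at a single vertex $v \in K$. The unique way such words factor into loops will turn (M4) into a combinatorial statement about words over a two-letter alphabet.

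\emph{Step 1: two distinct first-return loops.} Since $C$ is infinite, condition (b) of Proposition~\ref{p:finite-component-1-1} fails for $K$. Take a simple cycle $c$ in $K$. Either $K$ has a vertex off $c$, or $E \cap K^2$ has an edge not in $c$. In both cases strong connectivity of $K$ gives an edge $uw \in E \cap K^2$ with $u$ on $c$ and $uw$ not an edge of $c$. Put $v \coloneqq u$. Call a cycle based at $v$ a \emph{loop} if $v$ does not occur in its interior. Let $\alpha$ be the loop obtained by rotating $c$ to start at $v$. Let $\beta$ be the loop that starts with the edge $vw$ and follows a shortest path from $w$ back to $v$ inside $K$. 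Then $\alpha \neq \beta$, since their second letters differ. Write $a \coloneqq |\alpha| - 1$ and $b \coloneqq |\beta| - 1$ for their lengths.

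\emph{Step 2: unique factorization.} Any composite of loops $\lambda_1 \# \cdots \# \lambda_m$ is a cycle at $v$, and the occurrences of $v$ in it are exactly the loop boundaries. Hence the sequence $(\lambda_1,\dots,\lambda_m)$ is recovered from the vertex word. Consequently, distinct words over the symbols $\{\alpha,\beta\}$ give distinct cycles in $K$. Two such words also give cycles of the same length whenever they contain equally many $\alpha$'s and equally many $\beta$'s.

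Now suppose $\suff_k(w_r) = \pref_k(w_s)$ with $2 \le k \le n-1$. The first letter of $\pref_k(w_s)$ is $v$, so position $n-k+1$ of $w_r$ is an occurrence of $v$, i.e.\ a loop boundary of $w_r$. The overlap has length $k \geq 2$, starts at $v$ and ends at $v$, so it is a union of whole loops. By unique factorization, the overlap is then a proper nonempty suffix of the symbol word of $w_r$ that equals a prefix of the symbol word of $w_s$. So (M4) reduces to showing that the symbol words $W_0$ and $W_1$ have no nontrivial overlaps, either with themselves or with each other. Conditions (M1)--(M3) are immediate from the construction, with $n \ge 3$ once the words contain enough loops.

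\emph{Step 3: choice of symbol words (main obstacle).} I would take
\[
W_r = \alpha^M\,\gamma_r\,\beta^M, \qquad \gamma_0 = \alpha\beta\,\beta^{b}\alpha^{b}\,\beta\alpha, \qquad \gamma_1 = \alpha\beta\,\alpha^{b}\beta^{b}\,\beta\alpha,
\]
with $M$ much larger than $|\gamma_r|$. The two words have the same number of each letter, so $w_0$ and $w_1$ have the same length, and they are distinct. Padding $\gamma_r$ with $\alpha\beta \cdots \beta\alpha$ ensures that every run of $\alpha$'s or $\beta$'s inside $\gamma_r$ has length at most $|\gamma_r| < M$. The long blocks $\alpha^M$ and $\beta^M$ are therefore the only runs of length $\ge M$.

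The overlap argument goes as follows. Suppose a suffix of $W_r$ of length $j$ equals a prefix of $W_s$.
\begin{itemize}
\item If $j \le M$, that suffix consists only of $\beta$'s while the prefix consists only of $\alpha$'s, which is impossible.
\item If $j > M$, the prefix begins with $\alpha^M$. So the suffix must begin with a run of at least $M$ $\alpha$'s, and the only such run is the initial block. Hence the suffix is all of $W_r$, contradicting that it is proper.
\end{itemize}
The delicate point is making sure no run of length $\ge M$ appears anywhere other than the two end blocks, including where $\gamma_r$ meets $\alpha^M$ and $\beta^M$. The ending $\beta\alpha$ and starting $\alpha\beta$ of $\gamma_r$ are chosen precisely to break these joins. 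If this choice fails, I would first try replacing the separators with $\alpha\beta\alpha\beta$, and increasing $M$, until every internal run is short.
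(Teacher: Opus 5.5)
Your strategy is the paper's. You take two distinct first-return loops at a common vertex of $K$; these are the paper's $\pi$ and $\zeta$, or $\theta$ when the extra edge is a self-loop, and you find them in essentially the same way from the failure of condition~(b) in Proposition~\ref{p:finite-component-1-1}. You then observe that the occurrences of the base vertex in a composite of such loops are exactly the block boundaries, which reduces (M4) to overlap-freeness of the block words. The difference is the choice of block words. The paper uses the fixed words $\pi\pi\zeta\pi\zeta\zeta$ and $\pi\pi\zeta\zeta\pi\zeta$ and checks the finitely many block overlaps by tables. You use $\alpha^M\gamma_r\beta^M$ with a run-length argument. Yours is more conceptual and would give arbitrarily many mutually overlap-free words; the paper's is shorter and fully explicit.

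However, your run-length step is wrong as written. Since $\gamma_r$ begins with $\alpha$, $W_r$ begins with $\alpha^{M+1}\beta$. So the proper suffix starting at the second symbol also begins with $\alpha^M$, and ``hence the suffix is all of $W_r$'' does not follow.

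This case is easy to exclude. That suffix reads $\alpha^M\beta\cdots$ and has length $|W_s|-1\geq M+1$, while $W_s$ begins with $\alpha^{M+1}$, so they differ at the $(M+1)$-st symbol. A cleaner fix is to let $\gamma_r$ begin with $\beta$ while still ending with $\alpha$ (e.g.\ replace the leading $\alpha\beta$ by $\beta\alpha$). Then the initial $\alpha$-run and the final $\beta$-run both have length exactly $M$, $\alpha^M$ occurs in $W_r$ only at the start, and your argument goes through verbatim.

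Two minor points. First, the exponent $b$ in $\gamma_r$ plays no role. Second, when $w=v$ you should say explicitly that $\beta$ is the self-loop $vv$. Under the paper's convention that paths have positive length, ``the edge $vw$ followed by a shortest path from $w$ to $v$'' would give $vvv$, which is not a loop.
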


\begin{proof} (cf.~\cite[Proof of Lemma~2.2]{boyle-lind-rudolph})
Since $C$ is infinite, there exist distinct vertices $v_0,v_1 \in K$ such that $v_0v_1 \in E$
(otherwise, being path-connected, $K$ would be reduced to a single vertex having a self-loop,
and $C$ would be reduced to a single constant configuration).
Let then $v_1\cdots v_h v_{h+1}$ be a path in $K$ going from $v_1$ to $v_0 = v_{h+1}$ of minimal length $h \geq 1$.
It follows that $\pi \coloneqq v_0v_1 \cdots v_hv_0$ is a simple cycle in $K$ of length $\ell_\pi = h+1$.
\par
From Proposition~\ref{p:finite-component-1-1}, we deduce that there exist $u,v \in K$ such that
$uv \in (E \cap K^2) \setminus \{v_iv_{i+1}: i=0,1, \ldots, h\}$.
We claim that there exist $i \in \{0,1,\ldots,h\}$ and $w \in K \setminus \{v_{i+1}\}$ such that $v_iw \in E$.
Indeed, if $u = v_i$ for some $i \in \{0,1, \ldots, h\}$, then  we can take $w \coloneqq v$.
Otherwise,  let $v'_0v'_1 \cdots v'_t$ be a path in $K$ of minimal length $t \geq 1$ connecting
$\pi$ to $u$. 
Thus, there exists $i \in \{0,1,\ldots,h\}$ such that $v'_0 = v_i$.
By minimality, we have $v'_1 \neq v_{i+1}$. One may then take $w \coloneqq v'_1$.
This proves the claim.
Up to relabeling the indices, we may assume that $i = 0$.
\par
Suppose first that $w \neq v_0$.
Let $u_1u_2 \cdots u_tv_{t'}$, $0 \leq t' \leq h$, be a path in $K$ of minimal length $t \geq 0$ connecting $w = u_1$ to $\pi$
(note that $t = 0$ corresponds to the case $u_1 = v_{t'}$ for some $t' \in \{2,\dots,h\}$).
Then $\zeta \coloneqq v_0u_1u_2 \cdots u_tv_{t'}v_{t'+1} \cdots v_hv_0$ is a simple cycle based at $v_0$ of length
$\ell_\zeta = t + h -t'+2$.
We then set
\[
w_0 \coloneqq \pi\# \pi\# \zeta \# \pi \#\zeta \#\zeta \ \mbox{ and } \ w_1 \coloneqq \pi\# \pi\# \zeta \# \zeta \# \pi \# \zeta.
\]
Observe that the words $w_0,w_1 \in V^*$ are such that  
\[
|w_0| = |w_1| = n \coloneqq  3(\ell_\pi + \ell_\zeta) + 1 = 6h+3(t-t')+ 10 \geq 3,
\] 
$w_0 \not= w_1$, and $\pref_1(w_0) = \suff_1(w_0) = \pref_1(w_1) = \suff_1(w_1) = v_0$.
It follows that $(w_0,w_1)$ satisfies Conditions (M1), (M2), and (M3) in Definition~\ref{def:marker}. 
\par
Suppose now that $w = v_0$. This means that $\theta \coloneqq v_0w = v_0v_0$ is a self-loop at $v_0$ in $K$,
 of length $\ell_\theta = 1$. 
 We then set
\[
w_0 \coloneqq \pi\# \pi\# \theta \# \pi \#\theta \#\theta \ \mbox{ and } \ w_1 \coloneqq \pi\# \pi\# \theta \# \theta \# \pi \# \theta.
\]
Observe that the words $w_0, w_1 \in V^*$ are such that
\[  
|w_0| = |w_1| =  n \coloneqq  3(\ell_\pi + \ell_\theta) + 1 = 3h + 7 \geq 3,
\] 
$w_0 \not= w_1$,
and $\pref_1(w_0) = \suff_1(w_0) = \pref_1(w_1) = \suff_1(w_1) = v_0$.
It follows that $(w_0,w_1)$ satisfies Conditions (M1), (M2), and (M3) in Definition~\ref{def:marker}. 
\par
We claim that, in either case,  $(w_0,w_1)$ also satisfies condition (M4) in Definition~\ref{def:marker}.
First note that, by minimality, the vertex $v_0$ only occurs at the beginning and at the end of the word $\pi$
(resp.~$\zeta$, resp.~$\theta$). 
Consequently, $v_0$ occurs in $w_0$ and $w_1$ exactly $7$ times,
say at positions $1 = i_1 < i_2 < \cdots < i_7 = n$ and
$1 = j_1 < j_2 < \cdots < j_7 = n$, respectively.
Thus, when comparing $\suff_k(w_r)$ and $\pref_k(w_s)$ for $k \in \{2,\dots,n-1\}$ and $r,s \in \{0,1\}$, 
we may limit ourselves to the  cases corresponding to $k = n - i_m$ (resp. $k = n - j_m$) for $m= 2,3, \ldots, 6$.
\par
In order to prove the claim, it thus suffices to look at the following tables, where in the first line we have factorized
the word $w_r$ into blocks (made up of $\pi$s, $\zeta$s, or $\theta$s) and in the subsequent five lines we have factorized
the blocks (also made up of $\pi$s, $\zeta$s, or $\theta$s) for the five meaningful prefixes of $w_s$.
A comparison of the columns shows that there is always a mismatch for some position of the corresponding blocks.

\begin{table}[h!]
\centering
\renewcommand{\arraystretch}{1.2}
\begin{tabular}{|c|c|c|c|c|c|}
\hline
$\pi$ & $\pi$ & $\zeta$ & $\pi$ & $\zeta$ & $\zeta$\\
\hline
& $\pi$ & $\pi$ & $\zeta$ & $\pi$ & $\zeta$\\
& \ & $\pi$ & $\pi$ & $\zeta$ & $\pi$\\
& \ & \ & $\pi$ & $\pi$ & $\zeta$\\
& \ & \ & \ & $\pi$ & $\pi$\\
& \ & \ & \ & \ & $\pi$\\
\hline
\end{tabular}
\vspace{0.5cm}
\caption{$w_0$ has no nontrivial overlaps with itself: $\suff_k(w_0) \neq \pref_k(w_0)$ for $k \neq 1, n$.}
\end{table}

\begin{table}[h!]
\centering
\renewcommand{\arraystretch}{1.2}
\begin{tabular}{|c|c|c|c|c|c|}
\hline
$\pi$ & $\pi$ & $\zeta$ & $\pi$ & $\zeta$ & $\zeta$\\
\hline
& $\pi$ & $\pi$ & $\zeta$ & $\zeta$ & $\pi$\\
& \ & $\pi$ & $\pi$ & $\zeta$ & $\zeta$\\
& \ & \ & $\pi$ & $\pi$ & $\zeta$\\
& \ & \ & \ & $\pi$ & $\pi$\\
& \ & \ & \ & \ & $\pi$\\
\hline
\end{tabular}
\vspace{0.5cm}
\caption{$w_0$ has no nontrivial overlaps with $w_1$: $\suff_k(w_0) \neq \pref_k(w_1)$ for $k \neq 1, n$. This covers the case $w \neq v_0$. By replacing $\zeta$ by $\theta$, the case $w = v_0$ is covered as well.}
\end{table}

\begin{table}[h!]
\centering
\renewcommand{\arraystretch}{1.2}
\begin{tabular}{|c|c|c|c|c|c|}
\hline
$\pi$ & $\pi$ & $\zeta$ & $\zeta$ & $\pi$ & $\zeta$\\
\hline
& $\pi$ & $\pi$ & $\zeta$ & $\zeta$ & $\pi$\\
& \ & $\pi$ & $\pi$ & $\zeta$ & $\zeta$\\
& \ & \ & $\pi$ & $\pi$ & $\zeta$\\
& \ & \ & \ & $\pi$ & $\pi$\\
& \ & \ & \ & \ & $\pi$\\
\hline
\end{tabular}
\vspace{0.5cm}
\caption{$w_1$ has no nontrivial overlaps with itself: $\suff_k(w_1) \neq \pref_k(w_1)$ for $k \neq 1, n$. This covers the case $w \neq v_0$. By replacing $\zeta$ by $\theta$, the case $w = v_0$ is covered as well.}
\end{table}

This proves the claim.
\par
We deduce that $(w_0,w_1)$ is a marker.
\end{proof}

\begin{lemma}
\label{l:not-injunctive}
Let $G = (V,E)$  be a finite directed graph and let $X \subset V^{\Z}$ denote the subshift of finite type associated with $G$.
Suppose that  there exists a marker $(w_0,w_1) \in V^* \times V^*$ such that $w_0$ and $w_1$ are both cycles in $G$.
Then the subshift $X$ is not injunctive.
\end{lemma}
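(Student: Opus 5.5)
The plan is to work with the definition of injunctivity used in this paper: $X$ is injunctive if every surjective endomorphism of $X$ is injective. To show that $X$ is \emph{not} injunctive, I will therefore construct an endomorphism $\tau \colon X \to X$ that is surjective but not injective. (The reverse combination, injective but not surjective, is what is needed to refute surjunctivity, and the paper handles that separately in Section~\ref{sec:construction-tau-C}.) This is also what the introduction to this section announces. The construction is a marker-based ``XOR with the next block'' map.

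\textbf{Setup and parsing.} Let $n = |w_0| = |w_1|$ and let $v_0$ be the common initial and terminal letter given by (M3). Since $w_0$ and $w_1$ are cycles at $v_0$, any concatenation $w_{r_1} \# w_{r_2} \# \cdots$ (consecutive blocks glued at the shared $v_0$) is a path in $G$. I call an occurrence of $w_0$ or $w_1$ in $x \in X$ a \emph{block}.

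By (M4), two blocks in $x$ either coincide, are disjoint, or overlap in exactly one position (end of one equals start of the next). Two blocks cannot overlap in $k \in \{2,\dots,n-1\}$ positions, and one cannot properly contain another since they have the same length. Hence the blocks of $x$ organize uniquely into maximal \emph{runs} of consecutively glued blocks. A run is finite, left-infinite, right-infinite, or bi-infinite. Each block carries a bit $r \in \{0,1\}$, and the run carries the corresponding finite or infinite bit sequence.

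\textbf{Definition of $\tau$.} In every run, replace the bit $r_j$ of each block by $r_j \oplus r_{j+1}$, where $r_{j+1}$ is the bit of the next block in the same run. If there is no next block, I use $r_{j+1} \coloneqq 0$, so the last block of a run is unchanged. All positions outside runs are left unchanged.

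Deciding the new symbol at position $i$ only requires looking at whether $i$ lies in a block and what the next block is, which is a window of size about $3n$. Commutation with $\sigma$ is clear, so $\tau$ is an endomorphism by Proposition~\ref{p:char-ca}, provided $\tau(x) \in X$. Membership in $X$ holds because the modified segments are still concatenations of cycles at $v_0$, and the gluing vertices are unchanged.

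\textbf{The key step, which I expect to be the main obstacle.} I must show that $\tau(x)$ has exactly the same block positions and runs as $x$. Only then can surjectivity be analysed run by run.

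I would argue this using (M4) again. Suppose a block occurrence in $\tau(x)$ is new, or an old one is destroyed. Any such occurrence must intersect a modified run. If it overlaps a block of that run in at least two positions, then by (M4) it coincides with that block. If it overlaps in only one position, it extends the run across its end. In that case the adjacent symbols are unmodified, so the same occurrence already existed in $x$, contradicting the maximality of the run. The same reasoning applied symmetrically shows that no block of $x$ is lost. The care needed here is to confirm that every partial overlap falls into one of these two cases.

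\textbf{Surjectivity and non-injectivity.} Once runs are preserved, it suffices to invert the bit map $(r_j) \mapsto (r_j \oplus r_{j+1})$ on each type of run, filling everything outside runs with the symbols of the target.
\begin{itemize}
\item On finite runs and on left-infinite runs (which have a last block), the map is bijective: solve backwards from the last block, whose bit is unchanged.
\item On right-infinite and bi-infinite runs, there is no last block. Choose the first bit freely (or any one bit, in the bi-infinite case) and solve $r_{j+1} = r_j \oplus y_j$ along the run. This gives exactly two preimages.
\end{itemize}
Each preimage constructed this way has the same run structure as the target, by the key step above, so it really maps to the target. Hence $\tau$ is surjective.

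Finally, $\tau$ is not injective. The periodic configurations built from $\cdots w_0 \# w_0 \# w_0 \cdots$ and from $\cdots w_1 \# w_1 \# w_1 \cdots$ both lie in $X$. They are distinct by (M2), and both are sent to the first one, since $0 \oplus 0 = 1 \oplus 1 = 0$. Therefore $X$ is not injunctive.
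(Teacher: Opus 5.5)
Your proposal is correct and is essentially the paper's proof: the same marker-based map that replaces $w_r \# w_s$ by $w_{r+s \bmod 2}$ on the first block (leaving the last block of a run unchanged), surjectivity by inverting the bit map run by run according to whether the run is finite, left-infinite, right-infinite or bi-infinite, and non-injectivity witnessed by the periodic configurations associated with $w_0$ and $w_1$. Your explicit (M4) argument that rewriting bits inside marker blocks neither creates nor destroys block occurrences spells out what the paper calls clear; it is needed so that the constructed preimage really has the intended runs. Your inversion rules (solve backwards from the last block, or forwards via $r_{j+1} = r_j \oplus y_j$ from one freely chosen bit) are also the correct ones, whereas some of the paper's displayed recursions for the $s_j$ are off by an index.
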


\begin{proof}
Denote by  $\ell$  the common length of the cycles $w_0$ and $w_1$ (so that $\ell = |w_0| - 1 = |w_1| -1$).
\par
Given a configuration $x \in X$, define the configuration $\tau(x)$ as follows. 
For  $i \in \Z$, if there exist  $r,s \in \{0,1\}$
such that $x_{[i,i+2\ell]} = w_{r}\# w_{s}$, 
define $\tau(x)_{[i,i+ \ell]} \coloneqq w_t$, where $t \coloneqq r+s \mod 2$. 
All other entries of $x$ remain unchanged.
The words  $w_0$ and $w_1$ have only trivial overlaps by (M4).
Moreover, by (M3), there is a vertex $v \in V$ such that $w_0$ and $w_1$ are both cycles based at $v$.
It follows that the configuration  $\tau(x)$ is well defined and that one has $\tau(x) \in X$.
\par
The map $\tau \colon X \to X$ is an endomorphism of $X$ since it clearly satisfies condition (b) in Proposition~\ref{p:char-ca} for $m = 2\ell+1$.
\par
Consider now the periodic configurations  $x^{(0)} \in X$ (resp.~$x^{(1)} \in X$) associated with the cycle
$w_{0}$
(resp.~$w_1$).
We have $x^{(0)} \not= x^{(1)}$ since $w_0 \not= w_1$ by (M2).
As $\tau(x^{(0)}) = \tau(x^{(1)}) = x^{(0)}$,
it follows that $\tau$ is not injective.
\par
Let us show that $\tau$ is surjective.
Let $y \in X$.
If there are no occurrences in $y$ of any of the words $w_{r} \# w_{s}$, $r,s \in \{0,1\}$,   then
$y = \tau(y)$. 
Otherwise, consider the set $S \subset \Z$ consisting of all $i \in \Z$ such that there exists $j \in \Z$ such that $j \leq i \leq j + 2\ell$ and $x_{[j,j+2\ell]} = w_{r}\# w_{s}$ for some $r,s \in \{0,1\}$.
We construct $x \in X$ such that $\tau(x) = y$ as follows.
If $i \in \Z \setminus S$, we set $x_i \coloneqq y_i$.
Suppose now $i \in S$.
Let $I$ be the maximal interval of $\Z$  such that $i \in I \subset S$.
The interval $I$ is of one of the following forms:
\begin{enumerate}
\item
$I = [a,a+k \ell ]$ for some $a,k \in \Z$ with $k \geq 2$;
\item
$I = [a,\infty)$ for some $a \in \Z$;
\item
$I = (-\infty,a]$ for some $a \in \Z$;
\item
$I = \Z$.   
\end{enumerate}
When $I$ is of the first form, 
we have $y_I = w_{r_1} \# \cdots \# w_{r_k}$, where $r_j \in \{0,1\}$ for all $j \in \{1,\dots,k\}$.
We then define $x_I$ by replacing $y_I$ by
$w_{s_1} \# \cdots \# w_{s_k}$, where
$s_k \coloneqq r_k$ and $s_j \coloneqq r_j + r_{j + 1} \mod 2$ for all $j \in \{1, \dots , k - 1\}$.
\par
When  $I$ is of the second form, we have 
$y_I = w_{r_0} \# w_{r_1} \# w_{r_2} \# \cdots $, where $r_j \in \{0,1\}$ for all $j \in \N$.
We then define $x_I$ by replacing $y_I$ by
$w_{s_0} \# w_{s_1} \# w_{s_2} \# \cdots $, where
$s_0 \coloneqq r_0$ and $s_j \coloneqq s_{j - 1} + r_j \mod 2$ for all $j \in \{1,2,\dots\}$.
\par
When $I$ is of the third form, we have
$y_I = \cdots \# w_{r_{-2}} \# w_{r_{-1}} \# w_{r_0}$, where $r_{-j} \in \{0,1\}$ for all $j \in \N$. 
We then define $x_I$ by replacing $y_I$ by
$\cdots \# w_{s_{-2}} \# w_{s_{-1}} \# w_{s_0}$, where
$s_0 \coloneqq r_0$ and $s_j \coloneqq s_{j + 1} + r_j \mod 2$ for all $j \in \{-1,-2,\dots\}$.
\par
Finally, when $y$ is of the fourth form, we have $S = \Z$ and 
$y = \cdots \# w_{r_{-1}} \# w_{r_0} \# w_{r_1} \#\cdots$ (the precise position of $y_0$ is irrelevant)
and in this case we take
$x \coloneqq  \cdots \# w_{s_{-1}} \# w_{s_0} \# w_{s_1} \#\cdots$, where
$s_0 \coloneqq r_0$,
$s_j \coloneqq s_{j - 1} + r_j \mod 2$ for all $j \in \{1,2,\dots\}$,
and $s_j \coloneqq s_{j + 1} + r_j \mod 2$ for all $j \in \{-1,-2,\dots\}$. 
   \par
We then clearly have  $x \in X$ and $\tau(x) = y$. 
This shows that $\tau$ is surjective.
As $\tau$ is not injective, we conclude that $X$ is not injunctive.
\end{proof}

\begin{proposition}
\label{p:not-injunctive-if-inf-irred}
Let $A$ be a finite set and let $X \subset A^{\Z}$ be a subshift of finite type.
Suppose that $X$ contains an infinite irreducible component.
Then the subshift $X$ is not injunctive.
\end{proposition}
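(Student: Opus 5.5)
The plan is to reduce to the vertex-shift case and then chain Lemma~\ref{l:marking-exist} with Lemma~\ref{l:not-injunctive}.

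First, injunctivity is invariant under topological conjugacy. If $\varphi \colon X \to Y$ is a conjugacy and $\tau$ is an endomorphism of $Y$, then $\varphi^{-1} \circ \tau \circ \varphi$ is an endomorphism of $X$. Since $\varphi$ is bijective, this conjugated map is surjective (resp.\ injective) exactly when $\tau$ is. Hence, by \cite[Theorem~2.3.2]{lind-marcus-second}, we may assume that $X$ is the vertex subshift associated with a finite directed graph $G = (V,E)$.

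We must also check that the hypothesis survives this reduction. The conjugacy maps $\NW(X)$ onto $\NW(Y)$ by Proposition~\ref{p:properties-endo}.(i), applied in both directions. It sends irreducible subshifts to irreducible subshifts by Proposition~\ref{p:properties-endo}.(iv). It therefore carries the unique finite partition of Proposition~\ref{p:irred-comp}.(iii) to the corresponding partition, so it maps irreducible components bijectively onto irreducible components while preserving their cardinalities. Thus the vertex subshift still has an infinite irreducible component $C$.

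By Remark~\ref{r:irred-vertex-sft}, we have $C = C_K$ for some path-component $K$ of $G$. Lemma~\ref{l:marking-exist} then provides a marker $(w_0,w_1)$ in which both $w_0$ and $w_1$ are cycles in $K$, hence cycles in $G$. Lemma~\ref{l:not-injunctive} now gives that $X$ is not injunctive. By the conjugacy invariance established above, the original subshift is not injunctive either.

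All of the substance sits in the two lemmas, which we take as given. The only step needing care is the invariance bookkeeping: that injunctivity and the property of having an infinite irreducible component are both preserved by conjugacy. Both are routine, as shown above.
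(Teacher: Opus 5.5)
Your proposal is correct and follows the paper's proof exactly. The paper likewise reduces to a vertex subshift and then chains Lemma~\ref{l:marking-exist} with Lemma~\ref{l:not-injunctive}; you additionally spell out why injunctivity and the existence of an infinite irreducible component are preserved under topological conjugacy, which the paper leaves implicit in the phrase ``we can assume''.
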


\begin{proof}
As we can assume that $X$ is the vertex subshift associated with a finite directed graph $G = (V,E)$,
this immediately follows from Lemma~\ref{l:marking-exist} and Lemma~\ref{l:not-injunctive}.
\end{proof}

\section{Construction of $\tau_C$}
\label{sec:construction-tau-C}

Let $G = (V,E)$ be a finite directed graph and let $X \subset V^{\Z}$ denote the subshift of finite type associated with $G$.
Fix some irreducible component $C$ of $X$.
In this section, we construct an injective endomorphism $\tau_C \colon X \to X$.
The fact that it may fail to be surjective
will be used in Section~\ref{sec:char-surj-sft} for proving  properties of surjunctive subshifts of finite type stated in Theorem~\ref{t:char-surj-sft}.
The construction of $\tau_C$ and the proof that it is an injective endomorphism are divided into several steps. \\  

\noindent
{\bf Step 1.}
Denote by $R$ the set of recurrent vertices of $G$.
Then there exists an integer $\ell = \ell(G) \geq 1$ such that, for each $v \in R$,
there is a cycle of length $\ell$ based at $v$ in $G$.
Indeed, by definition of recurrence, for each $v \in R$, we can find a cycle  based at $v$.
Denoting by $\ell_v$ the length of such a cycle,
we can take $\ell \coloneqq \lcm \{\ell_v : v \in R\}$.
In the sequel, we fix for each $v \in R$ a \emph{preferred} cycle $\pi_v$ of length $\ell$ based at $v$ in $G$.
Note that if $K$ is the unique path-component of $G$ containing $v$,
then all the vertices of $\pi_v$ lie in $K$. \\

\noindent
{\bf Step 2.}
Fix some  irreducible component $C$ of $X$.
Denote by $\KK$ the poset of path-components of $G$. 
Let $K \in \KK$ denote the path-component of $G$ associated with $C$.
Set  $\KK_1 \coloneqq \{K' \in \KK: K' \prec K\}$ and $\KK_2 \coloneqq \KK \setminus \KK_1$.
Note that if $K' \in \KK_1$ (resp.~$K' \in \KK_2$) and $K'' \in \KK$ satisfy $K'' \preceq K'$ (resp.~$K' \preceq K''$) then we have
$K'' \in \KK_1$ (resp.~$K'' \in \KK_2$). \\
Denote by $V_1 \subset V$  the set of vertices $v$ such that $v \prec u$ for some (equivalently, all) $u \in K$ and
set $V_2 \coloneqq V \setminus V_1$. 
Also set $R_1 \coloneqq R \cap V_1$ and $R_2 \coloneqq R \cap V_2$. \\
Observe that $R_1 = \bigcup_{K’ \in \KK_1} K’$ and
$R_2 = \bigcup_{K’ \in \KK_2} K’$.\\
\par

\noindent
{\bf Step 3.}
We are now in a position to define the map $\tau = \tau_C \colon X \to X$ we alluded to above.
Let $x \in X$ and let $\Gamma = \{K_0,K_1, \ldots, K_n\}$, where $n \geq 0$ and $K_0 \prec K_1 \prec \dots \prec K_n$, denote the chain of path-components of $x$.
Then we are in one and only one of the following three cases:

\begin{enumerate}[{\rm {Case} 1:}]
\item  The terminal path-component $K_n$ of $x$ is in $\KK_1$.
Note that under this assumption, all path-components $K_j$s of $x$ lie in $\KK_1$ as well.
We then set $\tau(x) \coloneqq \sigma^\ell(x)$, where the integer $\ell = \ell(G) \geq 1$ is given by Step 1.

\item  The initial path-component $K_0$ of $x$ is in $\KK_2$.
Note that under this assumption, all path-components $K_j$s of $x$ lie in $\KK_2$ as well.
We then set $\tau(x) \coloneqq \sigma^{-\ell}(x)$.

\item  The initial path-component $K_0$ of $x$ is in $\KK_1$ and its terminal path-component $K_n$ is in $\KK_2$.
Then, with the notation in \eqref{e:normal-form-irred}, we have $n = n(x) \geq 1$
and there exists a unique integer $h = h(x) \in \{0,1,\dots,n-1\}$  such that
$K_j \in \KK_1$ for all $0 \leq j \leq h$ and $K_j \in \KK_2$ for all $h+1 \leq j \leq n$.
\par
Let now $u \coloneqq x_{b_h} \in R_1$ (resp.\ $v \coloneqq x_{a_{h+1}} \in R_2$) and let 
$\pi_u = c_0 \cdots c_\ell$
(resp.\ $\pi_v = c'_0\cdots c'_\ell$) denote the preferred cycle of length $\ell$ based at $u$ (resp.\ $v$).
As remarked above, we have $c_0,\ldots,c_\ell \in K_h$ (resp.\ $c'_0,\ldots, c'_\ell \in K_{h+1}$).
We then set, for every $i \in \Z$,
\begin{equation}
\label{e:tau-x}
\tau(x)_i \coloneqq
\begin{cases}
x_{i+\ell} & \text{ if } i \in (-\infty, b_h - \ell],\\
c_{i - b_h+\ell} & \text{ if } i \in [b_h-\ell, b_h - 1],\\
x_i & \text{ if } i \in [b_h , a_{h+1}],\\
c'_{i - a_{h+1}} & \text{ if } i \in [a_{h+1}, a_{h+1}+\ell - 1],\\
x_{i-\ell} & \text{ if } i \in [a_{h+1}+\ell, \infty).
\end{cases}
\end{equation}
\end{enumerate}

\begin{remark}
\label{r:same-chain}
In either case, the chain of path-components of $\tau(x)$ is the same as that of $x$.
Observe that in Case~1 (resp.~Case~2), we have $x_i \in V_1$ (resp.~$ x_i \in V_2$) for all $i \in \Z$. 
In Case~3, 
we have   $h(\tau(x)) = h(x)$,   $b_h(\tau(x)) = b_h(x)$, and $a_{h+1}(\tau(x)) = a_{h+1}(x)$.
In Case~3, all vertices $x_i$ which are in $V_1$ (resp.\ in $V_2 \setminus [b_h+1, a_{h+1}]$) are shifted to the
left (resp.\ to the right) by $\ell$ positions in $\tau(x)$.
\end{remark}

It is clear that $\tau(x) \in X$ for all $x \in X$ (in Cases 1 and 2, the configuration $\tau(x)$ is obtained from $x$ by shifting it while, in Case 3, the configuration $\tau(x)$ is obtained from $x$ by inserting the two cycles $\pi_u$ and $\pi_v$ based at vertices $u$ and $v$, respectively). \\

\noindent
{\bf Step 4.}
Let us show that the map $\tau \colon X \to X$ is injective.
Let $x,y \in X$ such that $y = \tau(x)$.
As observed above, the configurations $x$ and $y$ have the same initial path-component $K_0 \coloneqq K_0(x) = K_0(y)$
and the same terminal path-component $K_n \coloneqq K_n(x) = K_n(y)$, where $n \coloneqq n(x) = n(y)$.
We distinguish the following mutually exclusive cases.

\begin{enumerate}[{\rm {Case} 1:}]
\item $K_n \in \KK_1$. Then, we have $y = \sigma^\ell(x)$, so that $x = \sigma^{-\ell}(y)$.
\item $K_0 \in \KK_2$. Then, we have $y = \sigma^{-\ell}(x)$, so that $x = \sigma^{\ell}(y)$.
\item $K_0 \in \KK_1$ and $K_n \in\KK_2$. Then, we deduce from \eqref{e:tau-x}  that
\begin{equation*}
x_i =
\begin{cases}
y_{i-\ell} & \text{ if } i \in (-\infty, b_h],\\
y_i & \text{ if } i \in [b_h+1, a_{h+1}],\\
y_{i+\ell} & \text{ if } i \in [a_{h+1}+1, +\infty).
\end{cases}
\end{equation*}
\end{enumerate}
In each case, the configuration $x$ is uniquely determined by the configuration $y$.
It follows that $\tau$ is injective.\\

\noindent
{\bf Step 5.}
Let us show that $\tau$ commutes with the shift map $\sigma$.
Let indeed $x \in X$. Observe that the chain of path-components of $\sigma(x)$ is the same as that of $x$.
In particular, $x$ and $\sigma(x)$ have the same initial path-component $K_0 \coloneqq K_0(x) = K_0(\sigma(x))$
and the same terminal path-component $K_n \coloneqq K_n(x) = K_n(\sigma(x))$, where $n \coloneqq n(x) = n(\sigma(x))$.
\par
We distinguish the same mutually exclusive cases as above.

\begin{enumerate}[{\rm {Case} 1:}]
\item $K_n \in \KK_1$.  
Then $\tau(x) = \sigma^\ell(x)$ and $\tau(\sigma(x)) = \sigma^\ell(\sigma(x)) = \sigma^{\ell + 1}(x)$.
We thus have $(\sigma \circ \tau)(x) = \sigma(\sigma^\ell(x)) = \sigma^{\ell + 1}(x) =  \tau(\sigma(x)) = (\tau \circ \sigma)(x)$.

\item $K_0 \in \KK_2$. 
Then $\tau(x) = \sigma^{-\ell}(x)$ and $\tau(\sigma(x)) = \sigma^{-\ell}(\sigma(x)) = \sigma^{-\ell + 1}(x)$.
We thus have $(\sigma \circ \tau)(x) = \sigma(\sigma^{-\ell}(x)) = \sigma^{-\ell + 1}(x) =  \tau(\sigma(x)) = (\tau \circ \sigma)(x)$.
\item $K_0 \in \KK_1$ and $K_n \in \KK_2$. 
We then have $h(\sigma(x)) = h(x)$,
 $a_j(\sigma(x)) = a_j(x)-1$ for all $j=1,2,\ldots,n$,
and $b_j(\sigma(x)) = b_j(x)-1$ for all $j=0,1,\ldots,n-1$.
From \eqref{e:tau-x}, we deduce that $(\tau \circ \sigma)(x) = \tau(\sigma(x)) = \sigma(\tau(x)) = (\sigma \circ \tau)(x)$.
\end{enumerate}
This shows that $(\tau \circ \sigma)(x) = (\sigma \circ \tau)(x)$ for all $x \in X$.
Therefore, we have $\tau \circ \sigma = \sigma \circ \tau$.\\

\noindent
{\bf Step 6.} 
Let $x \in X$ and $y \coloneqq \tau(x)$. 
We claim that $y_0$ only depends on the word $x_{[-\ell,\ell]} \in V^{2 \ell + 1}$.
We distinguish the following three cases.
\begin{enumerate}[{\rm (1)}]
\item $x_\ell  \in V_1$. 
Then, we have (cf.\ Remark~\ref{r:same-chain})
\[
y_0 = \sigma^\ell(x)_0 = x_{\ell}.
\]
\item $x_{-\ell} \in V_2$. Then, we have (cf.\ Remark~\ref{r:same-chain}):
\[
y_0 = \sigma^{-\ell}(x)_0 = x_{-\ell}.
\]
\item $x_{-\ell} \in V_1$ and $x_\ell \in V_2$.
We are then in Case 3.
Let $\Gamma(x) = \{K_0,K_1,  \ldots, K_n\}$, with $K_0 \prec K_1 \prec \dots \prec K_n$, denote the chain of path-components
of $x$ and recall that $ h \in \{0,1,\dots,n-1\}$  is the unique index such that $K_h \in \KK_1$ and $K_{h+1} \in \KK_2$.
Recall also that $b_h$ (resp.\ $a_{h+1}$) is the largest (resp.\ least) integer such that $x_{b_h} \in K_h$ (resp.\
$x_{a_{h+1}} \in K_{h+1}$).
Thus, we have $b_h < \ell$ and $a_{h + 1} > - \ell$ in this case.

\par
We distinguish the following subcases:
\begin{enumerate}[{\rm (3.1):}]
\item {$0 \leq b_h$.}
We have
\[
y_{[-\ell, b_h]} = x_0 \cdots x_{b_h} c_1c_2 \cdots c_\ell.
\]
Observe that the values $c_1,c_2, \dots, c_\ell$ are uniquely determined by $x_{b_h}$.
As $-\ell \leq 0 \leq b_h < \ell$, the claim follows in this case. \\

\item {$b_h \leq 0$ and $a_{h+1} \leq 0$.}
We have
\[
y_{[a_{h+1}, 2\ell]} = x_{a_{h+1}}c'_1c'_2 \cdots c'_\ell x_{a_{h+1}+1} \cdots x_{\ell}.
\]
Observe that the values $c'_1,c'_2, \dots, c'_\ell$ are uniquely determined by $x_{a_{h+1}}$.
As $-\ell < a_{h+1} \leq 0 \leq 2\ell$, the claim follows in this case. \\

\item {$b_h \leq 0$ and $0 \leq a_{h+1} \leq \ell$.}
We have
\[
y_{[b_h, a_{h+1}]} = x_{b_h} \cdots x_{a_{h+1}}.
\]
As $-\ell < b_h \leq 0 \leq a_{h+1} \leq \ell$, the claim follows in this case.

\item {$b_h \leq 0$ and $\ell < a_{h+1}$.}
We have
\[
y_{[b_h, \ell]} = x_{b_h} \cdots x_\ell.
\]
As $- \ell < b_h \leq 0 \leq \ell$, the claim follows in this case as well. \\
\end{enumerate}
\end{enumerate}
This completes the proof that $\tau(x)_0$ only depends on the word $x_{[-\ell,\ell]}$. \\

\noindent
{\bf Step 7.}
It follows from the previous step that there is a map $\mu \colon V^{2\ell + 1}  \to V$ such that
$\tau(x)_0 = \mu(x_{[-\ell,\ell]})$ for all $x \in X$.
As $\tau$ commutes with the shift (cf.\ Step 5), 
we deduce from Proposition~\ref{p:char-ca} that $\tau \colon X \to X$ is an endomorphism of $X$.

\section{Construction of $\tau^C$}
\label{sec:construction-tauC}

Let $G = (V,E)$ be a finite directed graph and let $X \subset V^{\Z}$ denote the subshift of finite type associated with $G$.
Let $C$ be an infinite irreducible component of $X$ which is minimal in the poset of all irreducible components of $X$.
In this section, we construct a surjective endomorphism $\tau^C \colon X \to X$.
The fact that it may fail to be pre-injective will be used in Section~\ref{sec:char-surj-sft}
for proving the properties of Moore subshifts of finite type stated in Theorem~\ref{t:char-surj-sft}.
\par
We first need some auxiliary lemmas of a combinatorial nature.
\par
\begin{lemma}
\label{l:2-cycles-de-long-n}
Let $K=(V,E)$ be a finite strongly connected directed graph.
Suppose that the associated subshift of finite type $X \subset V^{\Z}$ is infinite.
Then there exists an integer $n = n(K) \geq 1$ such that for each $v \in V$
there are two distinct cycles of length $n$ based at $v$ in $K$.
\end{lemma}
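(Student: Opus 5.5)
The plan is to reuse the marker construction of Lemma~\ref{l:marking-exist}, which applies directly: the vertex subshift of $K$ is irreducible and infinite, hence is its own unique infinite irreducible component, with associated path-component all of $V$. So Lemma~\ref{l:marking-exist} gives a marker $(w_0,w_1)$ made of two distinct cycles of the same length $m \geq 2$, both based at a common vertex $v_0$ by (M3). Only (M1)--(M3) will be used here, i.e.\ two distinct cycles of equal length $m$ based at $v_0$. (Alternatively one could take the simple cycle $\pi$ and the second cycle $\zeta$ or $\theta$ from that proof and form $\pi\#\zeta$ and $\zeta\#\pi$, which have equal length and are distinct because $v_0$ occurs in them at different positions; but the marker already suffices.)

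Next we transport these cycles to every vertex. Since $K$ is strongly connected, for each $v \in V$ we choose a path $\alpha_v$ from $v$ to $v_0$ and a path $\beta_v$ from $v_0$ to $v$. (If $v = v_0$ we simply take $\alpha_v\#\beta_v$ to be empty or any cycle at $v_0$; this does no harm.) Then $\alpha_v \# w_r \# \beta_v$, for $r = 0,1$, are two cycles based at $v$ of common length $L_v \coloneqq |\alpha_v| + |\beta_v| - 2 + m$. They are distinct because they share the prefix $\alpha_v$ and the suffix $\beta_v$ and differ in the middle, where $w_0 \neq w_1$.

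The remaining issue, and the only real obstacle, is that the lengths $L_v$ depend on $v$. We equalize them by padding. Set $N \coloneqq \lcm\{L_v : v \in V\}$, or alternatively choose for each $v$ a cycle $\gamma_v$ at $v$. Given two distinct cycles $c_0 \neq c_1$ of length $L_v$ at $v$, the cycles $c_0 \# c_0^{(k-1)\#}$ and $c_1 \# c_0^{(k-1)\#}$, where $k = N/L_v$, are cycles at $v$ of length $N$. They are distinct since their first $L_v+1$ letters are $c_0$ and $c_1$ respectively. Taking $n \coloneqq N$ then finishes the argument. Besides the length bookkeeping, the one point to check carefully is that the prefix comparison genuinely separates the padded cycles. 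This holds because distinct words of equal length remain distinct after appending a common suffix.
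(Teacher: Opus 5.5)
Your proof is correct. It uses the same basic ingredient as the paper, namely a simple cycle at a vertex together with an extra edge branching off it, but it organizes the argument differently.

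\textbf{The paper's route.} It reruns the first half of the construction from the proof of Lemma~\ref{l:marking-exist} at an arbitrary vertex $v_0$. This gives two distinct cycles at $v_0$ of possibly \emph{different} lengths, $h+1$ and $i+k+1$. The paper equalizes them by taking cross powers ($s=i+k+1$, $t=h+1$). Since $v_0$ was arbitrary, it then sets $n=\lcm(n_v)$.

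\textbf{Your route.} You cite Lemma~\ref{l:marking-exist} as a black box. This is legitimate: $\KK=\{V\}$, so $X$ is its own unique, infinite irreducible component, and you only need (M1)--(M3). The lemma already hands you two distinct cycles of the \emph{same} length at a single vertex. You then transport them to every $v$ by conjugating with paths $\alpha_v,\beta_v$, and pad.

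\textbf{Comparison.} Your version isolates a clean reduction: by strong connectivity, it suffices to find two distinct equal-length cycles at one vertex. Your padding $c_r\#c_0^{(k-1)\#}$ also makes distinctness immediate. The paper instead has to know that its particular cross powers $\pi^{s\#}$ and $\pi'^{t\#}$ differ. That holds there because the two cycles diverge at position $i+1$, inside the first period of each. It fails for arbitrary pairs of distinct cycles: for a cycle $c$ and $c^{2\#}$, the cross powers coincide. The paper's argument, for its part, is more self-contained and does not depend on the heavier marker lemma.

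\textbf{Minor points.}
\begin{itemize}
\item For $v=v_0$ no special case is needed, since strong connectivity already gives paths of positive length from $v_0$ to itself.
\item When $N=L_v$, read $c_0^{0\#}$ as the empty word.
\item In your parenthetical alternative, the justification ``$v_0$ occurs at different positions'' breaks down when $\ell_\pi=\ell_\zeta$. This can happen, for example when $\zeta=v_0uv_2v_0$ runs parallel to $v_0v_1v_2$. The conclusion $\pi\#\zeta\neq\zeta\#\pi$ still holds, because their second letters $v_1\neq w$ already differ.
\end{itemize}
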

\begin{proof}
Fix $v_0 \in V$. Then there exists $v_1 \in K \setminus \{v_0\}$ such that $v_0v_1 \in E$
(otherwise, being path-connected, $K$ would be reduced to the single vertex $v_0$ with a self-loop, and $X$ would consist only of the configuration
with constant value $v_0$, contradicting the hypothesis that $X$ is infinite).
Let then $v_1\cdots v_h v_0$ be a path in $K$ going from $v_1$ to $v_0$ of minimal length $h \geq 1$.
As in the proof of Lemma~\ref{l:marking-exist}, we can find $i \in \{0,1,\ldots,h\}$ and
$w \in K \setminus \{v_{i+1}\}$ such that $v_iw \in E$. As $K$ is path-connected, we can find a path
$w_0 \cdots w_k$ in $K$ going from $w = w_0$ to $v_0 = w_k$ of minimal length $k \geq 1$.
Then, $v_0v_1 \cdots v_hv_0$ and $v_0v_1 \cdots v_iw_0w_1 \cdots w_{k-1}v_0$ are distinct cycles in $K$ and
$(v_0v_1 \cdots v_hv_0)^{s\sharp} \neq (v_0 \cdots v_iw_0w_1 \cdots w_{k-1}v_0)^{t\sharp}$ for all integers $s,t \geq 1$.
In particular, taking $s \coloneqq i+k+1$ and $t \coloneqq h+1$ yields two distinct cycles based at $v_0$ of the same
length $n_{v_0} = (h+1)(i+k+1)$.
\par
One may then take $n \coloneqq \lcm(n_v: v \in V)$.
\end{proof}

\begin{remark}
We may also deduce Lemma~\ref{l:2-cycles-de-long-n} by using results from  the Perron-Frobenius theory of non-negative matrices 
(cf.~\cite[Chapter~4]{lind-marcus-second}, \cite[Section~1.3]{kitchens}, \cite[Chapter~6]{csc-ecag}) in the following way.
\par
The \emph{adjacency matrix} of $K = (V,E)$ is the matrix $A = (A_{uv})_{u,v \in V}$ where $A_{uv} = 1$ if $uv \in E$ and $A_{uv} = 0$ otherwise.
For every integer $m \geq 1$ and any $u,v \in V$, the $uv$-entry $(A^m)_{uv}$ of the matrix $A^m$ is  the number of paths of length $m$ in $K$
going from $u$ to $v$. 
Since $K$ is strongly connected, the matrix $A$ is irreducible
(i.e., for all $u,v \in V$, there exists an integer
$m = m(u,v) \geq 1$ such that $(A^m)_{uv} \geq 1$). 
The entropy of the vertex subshift $X$ associated with $K$ is given by $h(X) = \log \lambda$,
where $\lambda$ is the Perron-Frobenius eigenvalue  of $A$ 
(cf.~\cite[Theorem~4.3.1 and Theorem~4.3.3]{lind-marcus-second}).
Since $X$ is infinite, we have $\lambda > 1$
(cf.~\cite[Exercise~4.3.1 and Corollary~4.4.9]{lind-marcus-second}). 
The \emph{period} of $A$ is the integer $p \geq 1$ defined by $p \coloneqq \gcd\{n \geq 1: (A^n)_{vv} > 0\}$, where $v \in V$ 
(irreducibility of $A$
implies that $p$  is finite and does not depend on the choice of  $v$).
If $p=1$, the matrix  $A$ is primitive
(i.e., there exists an integer $m \geq 1$ such that all entries of $A^m$ are positive).
\par
There exists a partition $V = V_1 \sqcup \cdots \sqcup V_p$ and primitive irreducible matrices
$B_k = \left((B_k)_{uv}\right)_{u,v \in V_k}$, $k=1,\ldots,p$, each having Perron-Frobenius eigenvalue  $\lambda^p > 1$,
such that, up to  reindexing, the matrix $A^p$ admits the diagonal block  decomposition
\[
A^p =
\begin{pmatrix}
B_1 & 0 & \cdots & 0 \\
0 & B_2 & \cdots & 0 \\
\vdots & \vdots & \ddots & \vdots \\
0 & 0 & \cdots & B_p
\end{pmatrix}.
\]
It follows from the Perron-Frobenius theory for primitive matrices that,
for all $u,v \in V_k$ and $k=1,\ldots,p$,
 the sequence $(B_k^m)_{uv}/\lambda^m$ converges  to a finite positive limit 
as $m \to \infty$ 
(cf.~\cite[Theorem~4.5.12]{lind-marcus-second}, \cite[Theorem~1.3.2(f)]{kitchens}). 
Since $\lambda > 1$, we deduce that there exists an integer $m \geq 1$ such that
$(B_k^m)_{uv} \geq 2$
for all $u,v \in V_k$ and $k=1,\ldots,p$.
 Now, for every $v\in V$, the entry $(A^{pm})_{vv}$ belongs to one of the diagonal blocks $B_k^m$.
Hence $(A^{pm})_{vv} \geq 2$ for all $v \in V$, so that we can take $n \coloneqq pm$. 
\end{remark}

\begin{lemma}
\label{l:K-n-N-ell}
Let $K=(V,E)$ be a finite directed graph and let $n$ be a positive integer.
Then there exist positive integers $\ell = \ell(K,n)$ and $N = N(K,n)$ such that,
for every path $v_0v_1\cdots v_N$ of length $N$ in $K$, there exist integers $t,d\geq 1$
and $0\leq i_1<\cdots<i_t\leq N$ such that
\begin{itemize}
\item $i_j+dn \leq i_{j+1}$ for $j=1,\dots,t-1$;
\item $v_{i_j}=v_{i_j+dn}$ for $j=1,\dots,t$;
\item $tdn=\ell$.
\end{itemize}
\end{lemma}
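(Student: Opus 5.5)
The plan is a double pigeonhole argument: first find, inside every short block of the path, one vertex that repeats after a multiple of $n$ steps; then, among many consecutive blocks, find enough blocks that share the same multiple $d$. Set $m \coloneqq |V|$; if $V = \varnothing$ there are no paths and any values work, so assume $m \geq 1$. Let $M \coloneqq \lcm\{1,2,\dots,m\}$. I would take
\[
N \coloneqq m^2 M n \quad\text{and}\quad \ell \coloneqq Mn .
\]

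\emph{Step 1 (one return per block).} Cut a path $v_0 v_1 \cdots v_N$ into the $mM$ consecutive blocks $B_j \coloneqq [jmn,(j+1)mn]$, for $0 \leq j \leq mM-1$. These fit inside $[0,N]$, and consecutive blocks overlap only at their endpoints. Inside $B_j$, look at the $m+1$ vertices $v_{jmn + an}$ for $a = 0,1,\dots,m$. Since there are only $m$ vertices, two of them coincide. So there are integers $0 \leq a < b \leq m$ with $v_{jmn+an} = v_{jmn+bn}$. Put $i^{(j)} \coloneqq jmn + an$ and $d_j \coloneqq b - a \in \{1,\dots,m\}$. Then
\[
v_{i^{(j)}} = v_{i^{(j)} + d_j n}, \qquad [\,i^{(j)},\, i^{(j)} + d_j n\,] \subset B_j .
\]

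\emph{Step 2 (a common $d$).} The $mM$ values $d_j$ all lie in $\{1,\dots,m\}$. By pigeonhole, some value $d$ is taken by at least $M$ indices $j$. Since $d \leq m$, $d$ divides $M$, so $t \coloneqq M/d$ is a positive integer with $t \leq M$. Choose $t$ of these indices, $j_1 < \dots < j_t$, and set $i_s \coloneqq i^{(j_s)}$ for $s = 1,\dots,t$. Then $0 \leq i_1 < \dots < i_t \leq N$, $v_{i_s} = v_{i_s + dn}$ for every $s$, and $tdn = Mn = \ell$.

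\emph{Step 3 (disjointness).} Since $j_s < j_{s+1}$, the window $[i_s, i_s + dn]$ lies in $B_{j_s}$. Its right end is therefore at most $(j_s+1)mn \leq j_{s+1} mn \leq i_{s+1}$. This gives the separation condition $i_s + dn \leq i_{s+1}$.

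There is no genuine obstacle in this argument. The one point needing care is making the product $tdn$ independent of the path. Choosing $\ell$ as $n$ times the lcm of all possible values of $d$ handles this: for whichever $d$ occurs, the required number $t = M/d$ of returns is at most $M$, and Step 2 guarantees at least $M$ of them.
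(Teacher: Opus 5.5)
Your proof is correct and follows the same double pigeonhole strategy as the paper. You cut the path into consecutive blocks and find in each block a return $v_i = v_{i+dn}$ confined to that block. You then pigeonhole on $d$ to obtain $\ell/(dn)$ blocks sharing the same $d$, and the block structure gives the separation. The remaining differences are only bookkeeping. The paper pigeonholes on pairs $(v_i, i \bmod n)$ and uses $\ell = \lcm(n,2n,\dots,mn\cdot n)$ together with the weighted block count $B = 1+\sum_d(\ell/(dn)-1)$. Your sampling at positions spaced by $n$ gives $d \le m$ directly, so the uniform threshold $M=\lcm\{1,\dots,m\}$ suffices, with a simpler count and smaller constants.
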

\begin{proof}
Let $m \coloneqq |V|$ and set $M \coloneqq mn$ and $\ell \coloneqq \lcm(n,2n,\dots,Mn)$.
Note that $\ell$ is a multiple of $n$ and that $\ell/(dn)$ is a positive integer for all $d=1,2, \ldots, M$.
Define $B \coloneqq 1 +\sum_{d=1}^{M} (\ell/(dn) -1)$ and set $N \coloneqq BM$.
\par
Let $v_0v_1\cdots v_N$ be a path of length $N$ in $K$.
Consider the $B$ consecutive subpaths $\pi_r \coloneqq v_{rM}v_{rM+1}\cdots v_{rM+M}$, $r=0,\ldots,B-1$, of length $M$.
For each $r=0,\ldots,B-1$ consider the $M+1$ pairs $(v_{rM+a}, rM+a \bmod n) \in V \times \Z/n\Z$, $a=0,\ldots,M$.
Since $|V \times \Z/n\Z| = mn = M$, it follows from the pigeonhole principle that there exist indices
$rM \leq k_r < k'_r \leq rM+M$ such that $v_{k_r}=v_{k'_r}$ and $k_r \equiv k'_r \pmod n$.
Therefore $\ell_r \coloneqq k'_r - k_r$ is a positive multiple of $n$. Write $\ell_r = d_r n$, where $1 \leq d_r \leq M$.
\par
We claim that there exists some $d\in\{1,\dots,M\}$ such that $d_r=d$ for at least $t \coloneqq \ell/(dn)$ values of $r$.
Indeed, assume, for contradiction, that for every $d \in \{1,\dots,M\}$ the value $d$ occurs strictly fewer than $t$
times among the $d_r$'s. Since the number of occurrences is an integer, this means that each $d$ occurs at most
$t-1$ times. Summing over all possible values of $d$, we obtain the bound $B \leq \sum_{d=1}^{M}(\ell/(dn) -1)$,
a contradiction with the definition of $B$. This proves the claim.
\par
Let then $r_1 < r_2 < \cdots < r_t$ be values of $r$ such that $d_{r_j}  = d$,
and set $i_j \coloneqq k_{r_j}$ for $j=1,\dots,t$.
Then $0\leq i_1 < \cdots < i_t\leq N$ and $tdn = \ell$.
Moreover, since the chosen repetitions lie in the distinct subpaths $\pi_{r_j}$, $j=1,\dots,t$, we have
$i_j+dn = k_{r_j}+dn = k'_{r_j} \leq r_jM+M \leq r_{j+1}M \leq k_{r_{j+1}} = i_{j+1}$
for $j=1,\dots,t-1$.
Finally, by construction, $v_{i_j}=v_{k_{r_j}} = v_{k_{r_j}+dn} = v_{i_j+dn}$ for all $j=1,\dots,t$.
\end{proof}

\begin{remark}
\label{r:unique-admissible-pair}
For each path $v_0v_1\cdots v_N$ of length $N$ in $K$, let ${\mathcal A}(v_0v_1 \cdots v_N)$ denote the finite set of all pairs $\bigl(t,(i_j)_{j=1}^t\bigr)$ satisfying the conclusions of Lemma~\ref{l:K-n-N-ell}. By the lemma, this set is nonempty.
We choose one such pair canonically as follows.
First, choose $t$ minimal among all admissible pairs.
Among the pairs with this value of $t$, choose $(i_1,\dots,i_t)$ maximal with respect to the right lexicographic order, that is, comparing first $i_t$, then $i_{t-1}$, and so on.
Since the set of admissible pairs is finite and the right lexicographic order is total, each step of the selection procedure determines a unique maximal element.
Hence the above procedure selects a unique admissible pair, which we denote by
$T(v_0v_1 \cdots v_N) \in {\mathcal A}(v_0v_1 \cdots v_N)$.
\par
For example, let $u_0 \in K$ and suppose that $u_0u_1 \cdots u_n$ is a cycle of length $n$ based at $u_0$ in $K$.
Consider a path $v_0v_1\cdots v_N$ of length $N$ in $K$ such that $v_{N-\ell} \cdots v_N = (u_0u_1 \cdots u_n)^{d\#}$, where $\ell = dn$.
Then $t = 1$ and $i_1 = N-\ell$, so that $T(v_0v_1 \cdots v_N) = (1, N-\ell)$.
\end{remark}

We are now in a position to introduce the map $\tau^C \colon X \to X$ we alluded to above.
Its construction and the proof that it is a surjective endomorphism are divided into several steps. \\  

\noindent
{\bf Step 1.}
Let $K$ denote the path-component of $G$ associated with $C$.
\par
Let $n = n(K)$ be as in Lemma~\ref{l:2-cycles-de-long-n} and let $N = N(K,n)$ and $\ell = \ell(K,n)$ be the positive integers
provided by Lemma~\ref{l:K-n-N-ell}.
\par
Define a map $\tau = \tau^C \colon X \to X$ as follows.
Let $x \in X$ and let $\{K_0,K_1, \ldots, K_m\}$, where $K_0 \prec K_1 \prec \dots \prec K_m$, denote the chain of path-components of $x$. If the initial path-component $K_0$ is different from $K$, we set $\tau(x) \coloneqq \sigma^\ell(x)$.
If the terminal path-component $K_m$ is equal to $K$, i.e., if $x \in C$, we set $\tau(x) \coloneqq x$.
Otherwise, there exists a unique $b \in \Z$ such that $x_i \in K$ for all $i \in (-\infty, b]$ and $x_j \in V \setminus K$
for all $j \in [b+1, \infty)$. Consider the admissible pair
$T(x) \coloneqq T(x_{b-N}x_{b-N+1} \cdots x_b) =  \bigl(t,(i_j)_{j=1}^t\bigr) \in {\mathcal A}(x_{b-N}x_{b-N+1} \cdots x_b)$ defined in Remark~\ref{r:unique-admissible-pair}.
\par
We then set, for every $i \in \Z$,
\begin{equation}
\label{e_def-tau-squeeze}
\tau(x)_i \coloneqq
\begin{cases}
x_{i} & \text{ if } i \in (-\infty, i_1],\\
x_{i+dn} & \text{ if } i \in [i_1, i_2-dn],\\
x_{i+2dn} & \text{ if } i \in [i_2-dn, i_3-2n],\\
\cdots & \\
x_{i+(t-1)dn} & \text{ if } i \in [i_{t-1}-(t-2)dn, i_t-(t-1)dn],\\
x_{i+\ell} & \text{ if } i \in [i_t-(t-1), +\infty).
\end{cases}
\end{equation}
In other words, $\tau$ squeezes the cycle $x_{i_j}x_{i_j+1} \cdots x_{i_j+dn}$
(by collapsing $x_{i_j + 1}, x_{i_j + 2}, \ldots, x_{i_j + dn}$ to $x_{i_j}$, on the left) for $j=1,2 \ldots, t$.
\par
By uniqueness of the admissible pair $T(x)$ (cf.\ Remark~\ref{r:unique-admissible-pair}), the map $\tau$ is well defined.\\

\noindent
{\bf Step 2.}
Let us show that the map $\tau \colon X \to X$ commutes with the shift map $\sigma$.
Let indeed $x \in X$. Observe that the chain of path-components of $\sigma(x)$ is the same as that of $x$.
In particular, $x$ and $\sigma(x)$ have the same initial path-component $K_0 \coloneqq K_0(x) = K_0(\sigma(x))$
and the same terminal path-component $K_m \coloneqq K_m(x) = K_m(\sigma(x))$, where $m \coloneqq m(x) = m(\sigma(x))$.
\par
We distinguish the same mutually exclusive cases as above.

\begin{enumerate}[{\rm {Case} 1:}]

\item $K_0 \neq K$.
Then $\tau(x) = \sigma^\ell(x)$ and $\tau(\sigma(x)) = \sigma^\ell(\sigma(x)) = \sigma^{\ell + 1}(x)$.
We thus have $(\sigma \circ \tau)(x) = \sigma(\sigma^\ell(x)) = \sigma^{\ell + 1}(x) =  \tau(\sigma(x)) = (\tau \circ \sigma)(x)$.

\item $K_m = K$, that is, $x \in C$. Then $\sigma(x) \in C$ and therefore $\tau(\sigma(x)) = \sigma(x) = \sigma(\tau(x))$.

\item $K_0 = K \neq K_m$. Denoting by $b = b(x) \in \Z$ the unique coordinate such that $x_b \in K$ and
$x_{b+1} \notin K$, we have $b(\sigma(x)) = b(x) -1$.
Moreover, $T(\sigma(x)) = (t, (i_j')_{j=1}^t)$,  where $i'_j = i_j-1$ for all $j=1,\ldots, t$.
Consequently for all $i \in \Z$, the value of $\tau(\sigma(x))_i$ is obtained from formula \eqref{e_def-tau-squeeze}
just by replacing $i_j$ by $i'_j$ for all $j=1,\ldots, t$. But this is exactly the value of $\sigma(\tau(x))_i$!
We deduce that $\tau(\sigma(x)) = \sigma(\tau(x))$.
\end{enumerate}

This shows that $(\tau \circ \sigma)(x) = (\sigma \circ \tau)(x)$ for all $x \in X$.
Therefore, we have $\tau \circ \sigma = \sigma \circ \tau$.\\

\noindent
{\bf Step 3.}
Let $x \in X$ and $y \coloneqq \tau(x)$.
We claim that $y_0$ only depends on the word $x_{[-N,\ell]} \in V^{N+\ell + 1}$.
We distinguish the following three cases as above.

\begin{enumerate}[{\rm {Case} 1:}]
\item $x_0 \in V \setminus K$. Then $y_0 = \tau(x)_0 = \sigma^\ell(x)_0 = x_\ell$ only depends on $x_0$ and $x_\ell$.
\item $x_\ell \in K$. Then $y_0 = x_0$ only depends on $x_\ell$ and $x_0$.
\item $x_0 \in K$ and $x_\ell \in V \setminus K$. Let $0 \leq b < \ell$ such that $x_b \in K$ and $x_{b+1} \in V \setminus K$.
Recalling that the pair $T(x) = T(x_{b-N}x_{b-N+1} \cdots x_b)$ only depends on $x_{[b-N,b]}$,
the value $y_0$ equals one of $x_0, x_{dn}, x_{2dn}, \ldots, x_{\ell}$.
In particular, $y_0$ only depends on $x_{[b-N, \ell]}$.
\end{enumerate}
This completes the proof that $\tau(x)_0$ only depends on the word $x_{[-N,\ell]}$. \\

\noindent
{\bf Step 4.}
It follows from the previous step that there is a map $\mu \colon V^{N+\ell + 1} \to V$ such that
$\tau(x)_0 = \mu(x_{[-N,\ell]})$ for all $x \in X$.
As $\tau$ commutes with the shift (Step 2), we deduce from Proposition~\ref{p:char-ca} that $\tau \colon X \to X$
is an endomorphism of $X$.\\

\noindent
{\bf Step 5.}
Let us show that the map $\tau \colon X \to X$ is surjective.
Let $x \in X$. We distinguish the following three cases as above.

\begin{enumerate}[{\rm {Case} 1:}]
\item $K_0 \neq K$. Then $\tau(x) = \sigma^\ell(x)$ so that $x = \sigma^{-\ell}(\tau(x)) = \tau(\sigma^{-\ell}(x))$.
\item $K_m = K$, that is, $x \in C$. Then $x = \tau(x)$.
\item $K_0 = K \neq K_m$. As usual, let $b \in \Z$ such that $x_b \in K$ and $x_{b+1} \notin K$.
Consider the configuration $z$ defined by setting, for all $i \in \Z$,
\[
z_i \coloneqq \begin{cases} x_i & \mbox{ if } i \leq b\\
v_j & \mbox{ if } i = b + j \bmod n, \ b \leq i \leq b + \ell\\
x_{i-\ell} & \mbox{ if } i \geq b + \ell,
\end{cases}
\]
where $\pi = v_0v_1 \cdots v_n$ is a cycle of length $n$ based at $v_0 = x_b$ (cf.~Lemma~\ref{l:2-cycles-de-long-n}).
It is clear that $z \in X$, $T(z) = (1,b)$ (cf.~end of Remark~\ref{r:unique-admissible-pair})
and $\tau(z) = x$.
\end{enumerate}
This shows that $\tau$ is surjective.

\section{Proofs of the main results}
\label{sec:proofs}

In this section, we give the proofs of the theorems stated in the introduction.

\subsection{An auxiliary result}
We start by establishing a  result that will be used in the proof of each of the theorems stated in  the introduction.
\par 
Let $A$ be a finite set and let $X \subset A^{\Z}$ be a subshift of finite type.  
We denote by $\CC$ the poset of irreducible components of $X$.
Given $C,C' \in \CC$, we denote by $X_{C,C'} \subset X$ the subset consisting of all configurations $x \in X$ such that
$C_\alpha(x) = C$ and $C_\omega(x) = C'$.
Observe that $X_{C,C'}$ is a shift-invariant (but in general not closed) subset of $X$ and that one has $X_{C,C'} \not= \varnothing$ if and only if $C \preceq C'$.
We denote by $\OO_{C,C'} \coloneqq \{\OO(x) : x \in X_{C,C'} \}$ the set of shift-orbits in $X_{C,C'}$. 

\begin{lemma}
\label{l:W-C-C'}
Let $A$ be a finite set and let $X \subset A^{\Z}$ be a subshift of finite type.  
Let $C$ and $C'$ be finite irreducible components of $X$. 
Suppose that $C$ and $C'$ are  adjacent with $C \prec C'$.
Then the set $\OO_{C,C'}$ is finite.
\end{lemma}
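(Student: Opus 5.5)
We may assume that $X$ is the vertex subshift associated with a finite directed graph $G = (V,E)$. Let $K$ and $K'$ denote the path-components associated with $C$ and $C'$; by Remark~\ref{r:irred-vertex-sft}, $K \prec K'$ and they are adjacent in $\KK$. Let $x \in X_{C,C'}$. Its initial path-component is $K$ and its terminal one is $K'$, so its chain of path-components is $\Gamma(x) = \{K_0, \dots, K_n\}$ with $K_0 = K$ and $K_n = K'$. Any intermediate $K_j$ would satisfy $K \prec K_j \prec K'$, which adjacency forbids. Hence $n = 1$. By Proposition~\ref{p:normal-form-irred}(v), there are integers $b_0 < a_1$ such that
\begin{itemize}
\item $x_i \in K$ for $i \le b_0$,
\item $x_i \in T$ for $b_0 < i < a_1$,
\item $x_i \in K'$ for $i \ge a_1$.
\end{itemize}

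The plan is to pick a canonical representative of each orbit, namely the one with $b_0 = 0$. This is possible because $b_0(\sigma(x)) = b_0(x) - 1$, so every orbit in $\OO_{C,C'}$ contains exactly one configuration $x$ with $b_0(x) = 0$. It then suffices to show that there are only finitely many $x \in X_{C,C'}$ with $b_0(x) = 0$.

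For such an $x$, the transient stretch $x_{[1,a_1-1]}$ consists of pairwise distinct transient vertices by Proposition~\ref{p:normal-form-irred}(i). Hence $a_1 - 1 \le |T|$, and the word $x_{[0,a_1]}$ ranges over a finite set, with at most $|V|^{|T|+2}$ possibilities. Since $C$ is finite, Corollary~\ref{c:int-x-finite-irred-compo} applied to the interval $(-\infty, 0]$ shows that $x_{(-\infty,0]}$ is determined by $x_0$. Since $C'$ is finite, the same corollary shows that $x_{[a_1,\infty)}$ is determined by $x_{a_1}$. Therefore $x$ is determined by the pair $(a_1, x_{[0,a_1]})$, which takes finitely many values, and $\OO_{C,C'}$ is finite.

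There is no real obstacle here. The only point needing care is the use of adjacency to rule out intermediate path-components, since without it the middle of $x$ could wander in an intermediate component and the argument would break. One should also check that Corollary~\ref{c:int-x-finite-irred-compo} applies to the half-infinite intervals $(-\infty,b_0]$ and $[a_1,\infty)$; this is exactly the interval $I$ of that corollary, because of Proposition~\ref{p:normal-form-irred}(iii).
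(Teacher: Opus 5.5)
Your proposal is correct and follows essentially the same route as the paper: adjacency forces $\Gamma(x)=\{K,K'\}$, the transient stretch has bounded length, and Corollary~\ref{c:int-x-finite-irred-compo} shows the two infinite tails are determined by $x_{[b_0,a_1]}$. The only cosmetic difference is that you fix the representative with $b_0=0$, whereas the paper uses the shift-invariant word $w(x)=x_{b_0}x_{b_0+1}\cdots x_{a_1}$; you also make explicit the use of adjacency, which the paper leaves implicit.
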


\begin{proof}
We may assume that $X$ is the vertex subshift associated with a finite directed graph $G = (V,E)$.
Let $K$ (resp.~$K'$) denote the path-component of $G$ associated with $C$ (resp.~$C'$).
Thus, for $x \in X_{C,C'}$ the chain of path-components is $\Gamma(x) = \{K,K'\}$ and it follows from \eqref{e:normal-form-irred}
that there exist integers $b_0 < a_1$ such that $x_i \in K$ for $i \leq b_0$,
$x_i \in T$ for $b_0 < i < a_1$, and $x_i \in K'$ for $a_1 \leq i$.
Consider the word $w(x) \coloneqq u t_1\cdots t_kv \in V^{k + 2}$, where $u \coloneqq x_{b_0} \in K$,
$t_j  \coloneqq x_{b_0+j} \in T$ for $j =1,2, \ldots, k \coloneqq a_1 - b_0 - 1$, 
and $v \coloneqq x_{a_1} \in K'$.
Recall (cf.\ Proposition~\ref{p:normal-form-irred}.(i)) that the vertices $t_1,\dots,t_k$ are  all distinct.
Moreover, we clearly have $w(\sigma(x)) = w(x)$ for all $x \in X_{C,C'}$.
As the orbit of $x \in X_{C,C'}$ is entirely determined by $w(x)$ (cf.~Corollary~\ref{c:int-x-finite-irred-compo}),
it follows that $|\OO_{C,C'}| = |\{w(x): x \in X_{C,C'}\}|$. 
Since the sets $K$, $K'$, and $T$ are finite,
we deduce that the set $\OO_{C,C'}$ is itself finite. 
\end{proof}

\subsection{Proof of Theorem~\ref{t:char-surj-sft}}
\label{sec:char-surj-sft}
We shall prove the equivalence of the four conditions stated in Theorem~\ref{t:char-surj-sft} by successively showing
 (a) $\iff$ (c), (c) $\iff$ (b),  and (c) $\iff$ (d).

\begin{proof}[Proof of (a) $\iff$ (c) in Theorem~\ref{t:char-surj-sft}]
We can assume that $X$ is the vertex subshift associated with a finite directed graph $G = (V,E)$.
\par
Suppose that $X$ is surjunctive.
We first show that every irreducible component of $X$ is extremal.
Suppose by contradiction that there exists $C \in \CC$ which is not extremal.
Then there exist irreducible components  $C',C'' \in \CC$ such that $C' \prec C \prec C''$.
Since the poset $\CC$ is finite, we can take $C'$ and $C''$ adjacent to $C$.
Let  $K_0$ (resp.~$K_1$, resp.~$K_2$) denote the path-component of $G$ associated with $C'$ (resp.~$C$, resp.~$C''$).
Observe that $K_0 \prec K_1 \prec K_2$ with $K_0$ and $K_2$ both adjacent to $K_1$.
Denote by $Y$ the nonempty set consisting of all configurations $x \in X$ whose chain of path components is
$\Gamma(x) = \{K_0,K_1,K_2\}$.
Consider the endomorphism $\tau \coloneqq  \tau_C \colon X \to X$ defined in Section~\ref{sec:construction-tau-C}.
As $\tau$ is injective and the subshift $X$ is surjunctive, $\tau$ is surjective.
It follows from  Remark~\ref{r:same-chain} that $\tau^{-1}(Y) \subset  Y$.
As $\tau$ is surjective, we deduce that $\tau(Y) = Y$.
For every $x \in Y$, denote by $b_0(x) < a_1(x) \leq b_1(x) < a_2(x)$ the integers satisfying \eqref{e:normal-form-irred}
and set $m(x) \coloneqq b_1(x) - a_1(x)$.
It follows from \eqref{e:tau-x} (here $h = 0$) that $a_1(\tau(x)) = a_1(x) - \ell$ and $b_1(\tau(x)) = b_1(x)$.
Thus, we have
\begin{equation}
\label{e:m(x)}
m(\tau(x)) = b_1(\tau(x)) - a_1(\tau(x)) = b_1(x) - (a_1(x) - \ell) = m(x) + \ell,
\end{equation}
for all $x \in Y$. For a subset $Z \subset Y$ set $m(Z) \coloneqq \min_{x \in Z} m(x)$.
Then, keeping in mind that $Y \neq \varnothing$, from \eqref{e:m(x)} we get $m(\tau(Y)) = m(Y) + \ell > m(Y)$.
This yields $\tau(Y) \subsetneqq Y$, a contradiction.
We deduce that all irreducible components of $X$ are extremal.
\par
Let us now show that every non-isolated irreducible component of $X$ is finite.
Let $C \in \CC$ be a non-isolated irreducible component. 
We know that   $C$ is extremal.
Suppose that $C$ is maximal (the proof in the case when $C$ is minimal is similar).
Let then $C_0\in \CC$ be an irreducible component adjacent to $C$, so that $C_0 \prec C$.
Set $K_0 \coloneqq K_{C_0} \in \KK$ and $K_1 \coloneqq K_{C} \in \KK$, and observe that $K_0 \prec K_1$ with $K_0$ and $K_1$ adjacent.
Set now $Y \coloneqq X_{C_0,C}$. 
Thus, $Y$ is the nonempty set consisting of all configurations $x \in X$ whose chain of path components is $\Gamma(x) = \{K_0,K_1\}$.
Consider the endomorphism $\tau \coloneqq  \tau_C \colon X \to X$ defined in Section~\ref{sec:construction-tau-C}.
As $\tau$ is injective and the subshift $X$ is surjunctive, $\tau$ is surjective.
It follows from Remark~\ref{r:same-chain} that $\tau^{-1}(Y) \subset Y$.
As $\tau$ is surjective, we deduce that $\tau(Y) = Y$. Therefore $\tau^k(Y) = Y$ for all $k \in \N$.
\par
Let $y \in Y$. Denote by $b_0 < a_1$ (here $b_0 = b_0(y)$ and $a_1 = a_1(y)$) 
the integers satisfying \eqref{e:normal-form-irred} with respect to  the configuration $y$  and set $v \coloneqq y_{a_1} \in K_1$.
Let then $\pi_v = c'_0  \cdots c'_\ell$ be the preferred cycle based at $v$ of length $\ell$ as in Step 1.
Recall that it is entirely contained in $K_1$.
For every $k \in \N$ there exists $x = x(k) \in Y$ such that $\tau^k(x) = y$.
By applying \eqref{e:tau-x} we get $y_i = \tau^k(x)_i = c'_j$ for all $a_1 \leq i \leq a_1 + k \ell$ with $i \equiv a_1 + j \mod \ell$.
As $k \in \N$ was arbitrary, we deduce that in fact
\begin{equation}
\label{e:x-dans-Y-periodique-a-droite}
\mbox{$y_i = c'_j$ for all $i \geq a_1$ with $i \equiv a_1 + j \mod \ell$.}
\end{equation}
Let $z' = z'(y) \in C$ denote the periodic configuration defined by setting $z'_i \coloneqq c'_j$ for all $i \in \Z$ with
$i \equiv a_1 + j \mod \ell$. Keeping in mind \eqref{e:x-dans-Y-periodique-a-droite}, we have that $\omega(y) = \omega(z')$.
Thus from Proposition~\ref{p:alpha-omega-periodiques} we deduce that
\begin{equation}
\label{omega-limit-y}
\vert \omega(y) \vert = \vert \omega(z') \vert = \vert \OO(z') \vert = \ell \mbox{ for all } y \in Y.
\end{equation}
Suppose $C$ is infinite. Then by Proposition~\ref{p:finite-component-2}, $C$ contains infinitely many periodic points
and we can find $z \in \Per(C)$ whose minimal period $p$ satisfies $p > \ell$.
Consider the configuration $y^z \in Y$ defined as follows.
Since $K_1$ is a path-component, there exist an integer $k \geq 0$ and a  path $v_0  \cdots  v_k$ within $K_1$
such that $v_0 = x_{a_1}$ and $v_k = z_0$. We then set, for every $i \in \Z$,
\begin{equation}
\label{e:z-dans-Y}
y^z_i \coloneqq \begin{cases} x_i & \mbox{ if } i < a_1\\
v_{i - a_1} & \mbox{ if } a_1 \leq i \leq a_1+k \\
z_{i - k - a_1} & \mbox{ if } a_1 + k \leq i.
\end{cases}
\end{equation}
It is clear that $\Gamma(y^z) = \{K_0,K_1\}$, so that $y^z \in Y$.
As above, the $\omega$-limit set $\omega(y^z)$ of $y^z$ equals $\omega(z) = \OO(z)$.
This yields $\vert \omega(y^z) \vert = \vert \omega(z) \vert = \vert \OO(z) \vert = p > \ell$, contradicting \eqref{omega-limit-y}.
We deduce that $C$ is finite.
Thus, that every non-isolated component in $X$ is finite.
This completes the proof of (a) $\implies$ (c).
\par
Conversely, suppose that $X$ satisfies (c). Let us show that $X$ is surjunctive.
Let $\tau \colon X \to X$ be an injective endomorphism.
Note that, for every integer $m \geq 1$, the map $\tau^m$ is an injective endomorphism as well and
that $\tau$ is surjective if $\tau^m$ is surjective. 
On the other hand, it follows from  Proposition~\ref{p:inj-tau-implies-inj-rho} that the map $C \mapsto \tau(C)$, $C \in \CC$,
is a permutation of $\CC$. As a consequence, up to possibly replacing $\tau$ by $\tau^m$ for $m \coloneqq |\CC| !$,
we may assume that $\tau(C) = C$ for every $C \in \CC$.
\par
Let $y \in X$. Suppose first that $y \in \NW(X)$, that is, $y \in C$ for some $C \in \CC$.
As $\tau(C) = C$, there exists $x \in C$ such that $\tau(x) = y$.
\par
Suppose now that $y \in X \setminus \NW(X)$.
Then the irreducible components $C \coloneqq C_\alpha(y)$ and $C' \coloneqq C_\omega(y)$ satisfy $C \prec C'$ by
Proposition~\ref{p:irred-comp}. Thus $C$ and $C'$ are non-isolated. Therefore, $C$ and $C'$  are adjacent and both finite by our assumptions.
Consider   the nonempty shift-invariant subset $X_{C,C'} \subset X$ consisting of all $x \in X$ such that $C_\alpha(x) = C$
and $C_\omega(x) = C'$.
Observe that $\tau$ induces, by restriction, an injective map from $X_{C,C'}$ into itself
(cf.~Proposition~\ref{p:ca-sft-irred-compo}.(ii)).
As $\tau$ commutes with the shift, it induces a map $\eta \colon \OO_{C,C'} \to \OO_{C,C'}$.
We claim that $\eta$ is injective.
Indeed, if $z,z' \in X_{C,C'}$ have respective shift-orbits $O = \OO(z)$ and $O' = \OO(z')$ satisfying $\eta(O) = \eta(O')$,
then there exists $n \in \Z$ such that $\tau(z) = \sigma^n(\tau(z'))$.
This implies $\tau(z) = \tau(\sigma^n(z'))$ and hence $z = \sigma^n(z')$, since $\tau$ is injective.
We deduce that $O = O'$. This proves our claim.
As $\OO_{C,C'}$ is finite by Lemma~\ref{l:W-C-C'}, $\eta$ is surjective. Thus, there exists $x \in X_{C,C'}$ such that $\eta(\OO(x)) = \OO(y)$.
We deduce that there exists $n \in \N$ such that $y = \tau(\sigma^n(x))$.
\par
Thus $y \in \tau(X)$, showing that $\tau$ is surjective.
We deduce that $X$ is surjunctive.
This shows  (c) $\implies$ (a). 
\end{proof}

\begin{proof}[Proof of (c) $\implies$ (b) in Theorem~\ref{t:char-surj-sft}]
Suppose (c).
Let $\tau \colon X \to X$ be a surjective endomorphism and let us show that $\tau$ is
pre-injective.
Note that for every integer $m \geq 1$, the map $\tau^m$ is a surjective endomorphism as well and that $\tau$ is pre-injective if  $\tau^m$ is pre-injective (cf.~\cite[Corollary~2.6]{csc-goe-smale}).
It follows from  Proposition~\ref{p:surj-tau-implies-surj-rho} that the map
$C \mapsto \tau(C)$, $C \in \CC$,  is a permutation of $\CC$.
As a consequence, up to possibly replacing $\tau$ by $\tau^m$ for $m \coloneqq |\CC| !$,
we may assume that $\tau(C) = C$ for every $C \in \CC$.
\par
Let $x,y \in X$ be homoclinic configurations such that $\tau(x) = \tau(y)$.
By using Proposition~\ref{p:limit-sets-homoclinic} and Proposition~\ref{p:limi-set-vertex-ss},
we deduce that $C_\alpha(x) = C_\alpha(y)$ and $C_\omega(x) = C_\omega(y)$.
Let us set $C \coloneqq C_\alpha(x) = C_\alpha(y)$ and $C' \coloneqq C_\omega(x) = C_\omega(y)$.
\par
If $C=C'$, then $x,y \in C$ (cf.~Proposition~\ref{p:limi-set-vertex-ss}).
As $C$ is an irreducible subshift of finite type and $\tau$ induces, by restriction,
a surjective endomorphism of $C$, the Garden of Eden theorem for
irreducible subshifts of finite type (cf.~\cite[Corollary~2.19]{fiorenzi-sofic} and \cite[Exercise~6.120]{csc-ecag}) ensures that $x=y$.
\par
Otherwise, the irreducible components $C$ and $C'$ satisfy $C \prec C'$ by Proposition~\ref{p:irred-comp}.
Thus $C$ and $C'$ are non-isolated, and therefore  adjacent and both finite by (c).
Consider the shift-invariant subset $X_{C,C'} \subset X$   consisting of all $z \in X$ such that $C_\alpha(z) = C$ and $C_\omega(z) = C'$.
Observe that $X_{C,C'} \neq \varnothing$ since $x,y \in X_{C,C'}$.
It follows from Proposition~\ref{p:ca-sft-irred-compo}.(ii) that $\tau^{-1}(X_{C,C'}) = X_{C,C'}$.
As $\tau$ is surjective, we deduce that $\tau(X_{C,C'}) = X_{C,C'}$.
Consider   the set $\OO_{C,C'}$ of shift-orbits in $X_{C,C'}$.
Since $\tau$ commutes with the shift, it induces a surjective map
$\eta \colon \OO_{C,C'} \to \OO_{C,C'}$.
As $\OO_{C,C'}$ is finite by Lemma~\ref{l:W-C-C'}, $\eta$ is injective.
Since $\tau(x) = \tau(y)$, we have $\eta(\OO(x)) = \OO(\tau(x)) = \OO(\tau(y)) = \eta(\OO(y))$
so that $\OO(x) = \OO(y)$. We deduce that there exists $n \in \Z$ such that $x = \sigma^n(y)$.
This implies $\tau(y) = \tau(x) = \tau(\sigma^n(y)) = \sigma^n(\tau(y))$ and hence $n = 0$,
since the configurations in $X_{C,C'}$ are wandering (cf.~Proposition~\ref{p:irred-comp}.(vii)) and therefore non-periodic.
Consequently, $x=y$. This shows that $\tau$ is pre-injective. 
\end{proof}

\begin{lemma}
\label{l:finite-compo-not-moore}
Let $A$ be a finite set and let $X \subset A^{\Z}$ be a subshift of finite type.
Suppose that one of the following conditions is satisfied:
\begin{enumerate}[\rm (C1)]
\item
 $X$ admits a minimal irreducible component $C$ which is finite
and two irreducible components $C'$ and $C''$  such that $C \prec C' \prec C''$; 
\item
 $X$ admits a maximal irreducible component $C$ which is finite
and two irreducible components $C'$ and $C''$  such that $C'' \prec C' \prec C$.
\end{enumerate} 
Then $X$  is not Moore.
\end{lemma}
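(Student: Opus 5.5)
The plan is to exhibit, under (C1), a surjective endomorphism $\tau$ of $X$ together with two distinct homoclinic configurations that $\tau$ identifies. Case (C2) should then follow by applying (C1) to the reversed subshift $\{(x_{-i})_{i\in\Z} : x \in X\}$. Reversal reverses the order on $\CC$, preserves finiteness of components, and conjugates endomorphisms and homoclinicity classes. We may assume $X$ is the vertex subshift of a graph $G=(V,E)$. Let $K$, $K'$, $K''$ be the path-components associated with $C$, $C'$, $C''$. Replacing $C'$ and $C''$ if necessary, we take $C'$ adjacent to $C$ and $C''$ adjacent to $C'$, which is possible since $\CC$ is finite. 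By Proposition~\ref{p:finite-component-1-1}, $K$ is a single simple cycle of some length $p$. Fix $\ell$ to be a common multiple of $p$ and of the length $\ell(G)$ from Step~1 of Section~\ref{sec:construction-tau-C}. By Corollary~\ref{c:int-x-finite-irred-compo}, on $C$ and on the $K$-part of any configuration, $\sigma^{\ell}$ acts as the identity.

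The construction of $\tau$ imitates Section~\ref{sec:construction-tau-C}, with the roles reversed so that a block is \emph{deleted} rather than inserted. The point is that deleting material just after the left tail is harmless when that tail lies in the finite minimal component $C$. Concretely:
\begin{itemize}
\item On configurations whose initial path-component is not $K$, put $\tau\coloneqq\sigma^{-\ell}$.
\item On $C$, put $\tau\coloneqq\mathrm{Id}$, which agrees with $\sigma^{-\ell}$ there.
\item On configurations $x$ with initial path-component $K$ and terminal path-component different from $K$, keep the $K$-tail $x_{(-\infty,b_0]}$ in place (its continuation to the right is forced and $p$-periodic). Shift the part of $x$ lying strictly after the tail to the right by $\ell$, and fill the gap of length $\ell$ with the forced continuation of the $K$-cycle.
\end{itemize}
In effect this lengthens the $C$-tail by $\ell$. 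Steps~5--7 of Section~\ref{sec:construction-tau-C} carry over almost verbatim and show that $\tau$ commutes with $\sigma$ and has a finite window. Indeed, whether the window sits in the $K$-tail or after it is read off locally, because $K$ is minimal and so $K$-vertices can only occur at the left end. Surjectivity also comes from a locally determined inverse:
\begin{itemize}
\item Configurations not starting in $K$ have preimage $\sigma^{\ell}(y)$.
\item Configurations starting in $K$ have preimage obtained by shortening the $K$-tail by $\ell$. This shortening is possible because the tail is infinite.
\end{itemize}

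The main obstacle is that the map just described is injective, being essentially a conjugate of $\sigma^{-\ell}$. So it has to be modified to destroy pre-injectivity, and this is where $C'$ and $C''$ are used. I would first try the following variant. On configurations whose chain begins $K\prec K'$, let $\tau$ absorb into the $K$-tail one whole excursion through $T$ and $K'$, and not merely a fixed block of length $\ell$.

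The candidate homoclinic pair is built as follows. Fix a path from $K$ into $K'$, a cycle $\gamma$ in $K'$ of length $\ell$, and a path from $K'$ to $K''$, with $K''$ continued periodically to the right. Let $x$ and $y$ agree with this skeleton outside a finite window, and differ only in that $y$ traverses $\gamma$ once more than $x$ in the $K'$-section. Their left tails can be aligned using the $p$-periodicity of the $C$-tail, so $x$ and $y$ are homoclinic and distinct. The task is to arrange the rule so that $\tau(x)=\tau(y)$ while surjectivity survives.

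The rule I would try first is: whenever the $K'$-segment immediately following the $K$-tail begins with $\gamma$, delete that copy of $\gamma$. Otherwise, apply the tail-lengthening above. Preimages can still be produced by inserting $\gamma$, which parallels Step~5 of Section~\ref{sec:construction-tauC}. Applying the rule to $x$ and $y$ should produce the same configuration. The delicate points are checking that this rule still has a bounded window and that it genuinely identifies $x$ and $y$, rather than once more shifting them by different amounts. If this fails, I would fall back on taking $C'$ finite versus infinite as separate cases and using the squeezing of Section~\ref{sec:construction-tauC} in the infinite case.
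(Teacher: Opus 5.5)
\textbf{There is a genuine gap.} Your proposal never produces a working endomorphism, and the rule you settle on fails on exactly the two points you flag as delicate.

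\textbf{Your baseline map is just a shift.} Since $p\mid\ell$, every $K$-tail is $\ell$-periodic. So your tail-lengthening map is not merely ``essentially conjugate'' to $\sigma^{-\ell}$: it \emph{is} $\sigma^{-\ell}$ on all of $X$.

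\textbf{The rule fails on your own pair.} In your pair, $x$ enters $K'$ and leaves for $K''$ without a copy of $\gamma$, while $y$ has its $K$-tail shortened by $\ell$ and one extra $\gamma$. There are two readings of ``delete $\gamma$''.
\begin{enumerate}
\item If the $K$-tail absorbs $\gamma$ (the tail grows by $\ell$ and everything after $\gamma$ stays in place), then $\tau(y)=x$, while $\tau(x)=\sigma^{-\ell}(x)\neq x$ because $x$ is wandering, hence not periodic. Moreover, this branch acts as $\Id$ to the right of the deleted $\gamma$. On the configuration $\cdots\gamma\gamma\gamma$ followed by an exit to $K''$ (which does not start in $K$), your $\tau$ is $\sigma^{-\ell}$ and moves the exit. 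Compare it with a $\gamma$-branch configuration whose $K'$-segment consists of $M\gg 1$ copies of $\gamma$ followed by the same exit. A bounded window around the exit sees the same pattern in both but must output different symbols, so $\tau$ is not even a sliding block code.
\item If instead the gap is closed by moving the suffix left by $\ell$, you get $\tau(y)=\sigma^{\ell}(x)\neq\sigma^{-\ell}(x)=\tau(x)$, with the same locality problem.
\end{enumerate}

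\textbf{The fallback does not close the gap.} The squeezing of Section~\ref{sec:construction-tauC} reads its block inside a left-infinite tail of a \emph{minimal} component. Here $K'$-segments are finite and can be shorter than $N$. In any case, the case of finite $C'$ (e.g.\ Example~\ref{ex:type-5}) would remain open.

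\textbf{The missing idea.} Abandon the global shift and use a purely local substitution, as the paper does. Let $v_0t_1\cdots t_qv_0'$ be the passage from $K$ into $K'$, let $\gamma$ be a cycle of length $\ell$ at $v_0'$ in $K'$, and let $c$ be the cycle of length $\ell$ at $v_0$ in $K$. Put $\pi_0\coloneqq v_0t_1\cdots t_q\gamma$ and $\pi_1\coloneqq c\,\#\,v_0t_1\cdots t_qv_0'$. Let $\tau$ replace an occurrence of $\pi_0$ by $\pi_1$ and act as the identity elsewhere. There is at most one occurrence, since a configuration leaves $K$ at most once.
\begin{itemize}
\item \emph{Surjectivity} uses that $C$ is finite and minimal. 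Any occurrence of $\pi_0$ in $z$ is automatically preceded by the forced $K$-cycle, i.e.\ $\pi_1$ occurs $\ell$ places earlier. Swapping that $\pi_1$ back to $\pi_0$ gives a preimage.
\item \emph{Non-pre-injectivity} is witnessed by your pair, provided $x$ runs through $K'$ on pairwise distinct vertices before leaving for $K''$. This is exactly where $C''$ is needed. Then $x$ contains no $\pi_0$, so $\tau(x)=x=\tau(y)$.
\end{itemize}
Equivalently, your framework can be repaired by composing this substitution with $\sigma^{-\ell}$. In the $\gamma$-branch, the end of the tail then moves right by $2\ell$ and the suffix moves right by $\ell$.
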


\begin{proof}
We can assume that $X$ is the vertex subshift associated with a finite directed graph $G = (V,E)$.
We give the proof of the lemma assuming (C1) (the proof when (C2) is satisfied is similar).
\par
Let $K$ (resp.~$ C'$, resp.~$ C''$) denote the path-component of $G$ associated with $C$ (resp.~$ C'$, resp.~$ C''$).
By our hypotheses,
there exists a configuration $y \in X$ in which appears 
a path in $G$  of the form
\[
v_0 t_1\cdots t_q v_0'\cdots v_r' t_1'\cdots t_s' v_0''
\]
with $v_0 \in K$, $v_0',\dots ,v_r' \in K'$, $v_0'' \in K''$, $t_1,\dots,t_q,t_1',\dots,t_s' \in V \setminus (K \cup K' \cup K'')$,  and $q,r,s \geq 0$. 
After suppressing cycles in $K'$ that possibly appear in $y$, we can assume that  the vertices $v_0',\dots,v_r'$ are all distinct.
\par
Choose an integer  $n \geq 1$ such that there exist  a cycle
$v_0v_1\cdots v_{n-1}v_0$ of length $n$ in $K$ and a cycle  $v_0'u_1'\cdots u_{n-1}'v_0'$ of length $n$ in $K'$ (cf.~Step~1 in Section~\ref{sec:construction-tau-C}).
\par
Consider the paths $\pi_0$ and $\pi_1$ in $G$  of length $\ell \coloneqq n + q + 1$  going from $v_0$ to $v_0'$ respectively defined by
\[
\pi_0 \coloneqq v_0 t_1 \cdots t_q v_0' u_1' \cdots u_{n - 1}' v_0'  \text{ and } 
\pi_1 \coloneqq v_0 v_1 \cdots v_{n-1} v_0 t_1 \cdots t_q v_0'.
\]
Define a map $\tau \colon X \to X$ as follows.
Let $x \in X$. 
If the word $\pi_0$ appears in $x$, i.e., there exists $i \in \Z$ such that
$x_{[i,i+ \ell ]} = \pi_0$
(note that such an $i $ is then unique), we define $\tau(x)$ as being the configuration in $X$ obtained from $x$ by replacing 
$\pi_0$   by  $\pi_1$ on $[i,i+ \ell]$.
Otherwise, we set $\tau(x) \coloneqq x$.
\par
The map  $\tau \colon X \to X$ is  an endomorphism of $X$
since it clearly satisfies Condition~(b) in Proposition~\ref{p:char-ca} for $m \coloneqq \ell$.
\par
We claim that $\tau$ is surjective but not pre-injective.
\par
To prove surjectivity, let $z \in X$.
 If the word $\pi_0$ does not appear in $z$, then $z = \tau(z)$.
 Otherwise, consider the unique  $i \in \Z$ such that $z_{[i,i+ \ell]} = \pi_0$.
Since $C$ is minimal and  finite, we then have  $z_{[i - n,i+\ell -n]} = \pi_1$ by Corollary~\ref{c:int-x-finite-irred-compo}.
Consider  the configuration $z' \in X$ obtained from $z$ by replacing $\pi_1$ by $\pi_0$ on $[i - n,i+\ell -n]$.
 We then have $z = \tau(z')$.
This shows that $\tau$ is surjective.
\par
To prove that $\tau$ is not pre-injective, consider the configuration $y' \in X$ obtained from $y$ by replacing the path $\pi_1$ on $[i - n,i+ \ell -n]$ by the path $\pi_0$.
Then $\tau(y') = y = \tau(y)$.
As $y$ and $y'$ are disctinct homoclinic configurations, we deduce that $\tau$ is not pre-injective.
\par
Since $\tau$ is a surjective endomorphism of $X$ which is not pre-injective, we conclude that $X$ is not Moore.
  \end{proof}

\begin{lemma}
\label{l:infinite-compo-not-moore-1}
Let $A$ be a finite set and let $X \subset A^{\Z}$ be a subshift of finite type.
Suppose that $X$ contains two irreducible components $C$ and $C'$ with $C \prec C'$
such that one of these components is infinite and the other is finite and extremal.
Then $X$ is not Moore. 
\end{lemma}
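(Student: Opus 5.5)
The plan is to mimic the proof of Lemma~\ref{l:finite-compo-not-moore}: build a surjective endomorphism by a local replacement $\pi_0 \mapsto \pi_1$ that moves the exit point from the finite extremal component. The infinite component is used to produce two distinct homoclinic configurations with the same image.

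\textbf{Reduction to one case.} We may assume that $X$ is the vertex subshift associated with a finite directed graph $G=(V,E)$. Since $C \prec C'$, the component $C$ is not maximal and $C'$ is not minimal. So if the finite extremal component is $C$, it must be minimal and $C'$ is infinite. If it is $C'$, it must be maximal and $C$ is infinite. The second case reduces to the first by reversing time: replace $G$ by the reversed graph, so that $\sigma$ becomes $\sigma^{-1}$. This reverses the order on $\CC$, preserves finiteness of components, and preserves the notions of endomorphism, surjectivity and pre-injectivity. So we assume that $C$ is finite and minimal and that $C'$ is infinite.

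\textbf{Construction of $\tau$.} Let $K, K'$ be the associated path-components. Choose a path $v_0 t_1\cdots t_q v_0'$ in $G$ with $v_0\in K$, $t_j\notin K$ and $v_0'\in K'$; here $q\ge 0$, and the $t_j$ may lie in other components. Fix an integer $n\ge1$ such that there is a cycle $v_0v_1\cdots v_{n-1}v_0$ of length $n$ in $K$. By Lemma~\ref{l:2-cycles-de-long-n}, after replacing $n$ by a common multiple, we may also assume there are two distinct cycles $\gamma=v_0'u_1'\cdots u'_{n-1}v_0'$ and $\gamma'\neq\gamma$ of length $n$ based at $v_0'$ in $K'$. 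Put $\ell \coloneqq n+q+1$ and
\[
\pi_0 \coloneqq v_0t_1\cdots t_q\,v_0'u_1'\cdots u_{n-1}'v_0', \qquad
\pi_1 \coloneqq v_0v_1\cdots v_{n-1}v_0t_1\cdots t_qv_0'.
\]
Both are paths of length $\ell$ from $v_0$ to $v_0'$. Define $\tau(x)$ by replacing $\pi_0$ with $\pi_1$ on $[i,i+\ell]$ if $x_{[i,i+\ell]}=\pi_0$, and $\tau(x)\coloneqq x$ otherwise.

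\textbf{Well-definedness.} An occurrence of $\pi_0$ starts at a position where $x$ leaves $K$ for the last time. Since $K$ is minimal, Proposition~\ref{p:normal-form-irred} shows the positions in $K$ form an interval $(-\infty,b_0]$, so there is at most one such occurrence. The result stays in $X$ because endpoints match. It satisfies Proposition~\ref{p:char-ca}(b) with $m=\ell$, so $\tau$ is an endomorphism.

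\textbf{Surjectivity.} Let $z\in X$. If $\pi_0$ does not occur in $z$, then $\tau(z)=z$. If $z_{[i,i+\ell]}=\pi_0$, then by Corollary~\ref{c:int-x-finite-irred-compo}, since $C$ is finite and minimal, the left tail of $z$ is periodic along the unique cycle of $K$, so $z_{[i-n,i+\ell-n]}=\pi_1$. Let $z'$ be obtained by substituting $\pi_0$ there. The overlap consistency is the point to check: $z'_{i+\ell-n}=v_0'=z_{i+q+1}$, so $z'\in X$. The unique occurrence of $\pi_0$ in $z'$ is at $i-n$, hence $\tau(z')=z$.

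\textbf{Failure of pre-injectivity.} This is where infiniteness of $C'$ is used, and it is the main point. Take $y\in X$ whose left tail is the periodic configuration of the $n$-cycle in $K$, followed by $\pi_1$ on $[0,\ell]$, then $\gamma'$ from position $\ell$, and then continuing forever inside $K'$. In $y$ the unique exit from $K$ occurs at position $n$. Since the $n$ steps of $y$ after reaching $v_0'$ follow $\gamma'\neq\gamma$, the word $\pi_0$ does not occur in $y$, so $\tau(y)=y$. Let $y'$ be obtained from $y$ by replacing $\pi_1$ on $[0,\ell]$ by $\pi_0$. Then $y'$ is homoclinic to $y$ and distinct from it, and $\tau(y')=y=\tau(y)$. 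Thus $\tau$ is surjective but not pre-injective, and $X$ is not Moore.
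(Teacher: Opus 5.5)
Your proof is correct. It differs from the paper's in where the two distinct cycles sit and in how the collision is produced.

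The paper treats the mirror case ($C$ infinite, $C'$ finite and maximal) and uses a two-to-one substitution. Both $\pi_0=v_0v_1\cdots v_{n-1}v_0t_1\cdots t_qu_0$ and $\pi_0'=v_0v_1'\cdots v_{n-1}'v_0t_1\cdots t_qu_0$, which differ only in a cycle inside the infinite component, are replaced by $\pi_1=v_0t_1\cdots t_qu_0u_1\cdots u_{n-1}u_0$. Non-pre-injectivity is then witnessed by the two preimages of a configuration $\overline{x}$ containing $\pi_1$.

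You instead use a single substitution $\pi_0\mapsto\pi_1$. This is exactly the endomorphism from the proof of Lemma~\ref{l:finite-compo-not-moore} under (C1), now with $K'$ infinite rather than intermediate. Your collision is between the configuration $y'$ containing $\pi_0$ and a fixed point $y$ of $\tau$. In $y$, the continuation $\gamma'\neq\gamma$ after entering $K'$ prevents $\pi_0$ from occurring.

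In your argument, the second cycle $\gamma'$ in the infinite component plays precisely the role that the escape to $C''$ plays in Lemma~\ref{l:finite-compo-not-moore}. So one construction handles both lemmas: all that is needed is a configuration in which the exit from the finite extremal component is not followed by $\gamma$. The cost is the small extra check that $\pi_0$ does not occur in $y$, which you carry out correctly by noting that any occurrence must start at the last position in $K$.

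The paper's version makes the loss of injectivity visible directly in the local rule, since two words have the same image.

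Your explicit time-reversal reduction, passing to the reversed graph, is a valid way to make precise the paper's remark that the other case is ``similar''.
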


\begin{proof}
We can assume that $X$ is the vertex subshift associated with a finite directed graph $G = (V,E)$.
We give the proof assuming that $C$ is infinite and $C'$ is finite and maximal (the proof when $C'$ is infinite and $C$ is finite and minimal is similar).
\par
Let $K$ (resp.~$K'$) denote the path-component of $G$ associated with $C$ (resp.~$C'$).
Since $C \prec C'$, there exists a configuration $\overline{x} \in X$ whose initial and terminal path-components are  
$K$ and $K'$, respectively.
Then, up to shifting $\overline{x}$ if necessary, there exist $v_0 \in K$, $u_0\in K'$, $t_1,\dots,t_q \in V \setminus (K \cup K')$,
where $q \geq 0$, such that $\overline{x}_{[0,q+1]} = v_0 t_1\cdots t_q u_0$.
\par
As $C$ is infinite, it follows from Lemma~\ref{l:2-cycles-de-long-n} that there exists an integer $n \geq 1$
and two distinct cycles $v_0v_1 \cdots v_{n-1}v_0$ and $v_0v'_1 \cdots v'_{n-1}v_0$ in $K$ based at $v_0$ of length $n$.
In fact, arguing as in Step 1 in Section~\ref{sec:construction-tau-C}, up to replacing $n$ by a suitably chosen multiple,
we can also find a cycle $u_0u_1 \cdots u_{n-1}u_0$ in $K'$ based at $u_0$ of the same length $n$.
\par
Consider the paths $\pi_0$, $\pi'_0$, and $\pi_1$ in $G$, with length $\ell \coloneqq n + q + 1$ and going from $v_0$ to $u_0$,
respectively defined by
\[
\begin{split}
\pi_0 & \coloneqq v_0v_1 \cdots v_{n-1}v_0t_1\cdots t_q u_0\\
\pi'_0 & \coloneqq v_0v'_1 \cdots v'_{n-1}v_0t_1\cdots t_q u_0\\
\pi_1 & \coloneqq v_0t_1\cdots t_q u_0u_1 \cdots u_{n-1}u_0.
\end{split}
\]
Define a map $\tau \colon X \to X$ as follows.
Let $x \in X$. If there exists $i \in \Z$ such that
$x_{[i,i+ \ell]}$ equals either $\pi_0$ or $\pi'_0$
(note that such an $i$ is then unique), we define $\tau(x)$ as being the configuration in $X$ obtained from $x$ by replacing
$\pi_0$ or $\pi'_0$ by $\pi_1$ on $[i,i+ \ell]$.
If there is no such $i$, we set $\tau(x) \coloneqq x$.
\par
The map  $\tau \colon X \to X$ is an endomorphism of $X$
since it clearly satisfies Condition~(b) in Proposition~\ref{p:char-ca} with $m \coloneqq \ell$.
\par
We claim that $\tau$ is surjective but not pre-injective.
Let $z \in X$. If neither $\pi_0$ nor $\pi'_0$ appears in $z$, then $z = \tau(z)$ is in the image of $\tau$.
Otherwise, consider the unique $i \in \Z$ such that $z_{[i,i+ \ell]} = \pi_0$ (resp.~$z_{[i,i+ \ell]} = \pi'_0$).
Since $C'$ is finite and extremal, we have $z_{[i + n,i+\ell  + n]} = \pi_1$ by Corollary~\ref{c:int-x-finite-irred-compo}.
Consider now the configuration $w \in X$ obtained from $z$ by replacing $\pi_1$ by $\pi_0$ on
$[i + n,i+\ell +n]$. We then have $z = \tau(w)$. This shows that $\tau$ is surjective.
\par
In order to show that $\tau$ is not pre-injective, we observe that the configuration $\overline{x} \in X$ satisfies
that $\overline{x}_{[0,\ell]} = \pi_1$. This follows from the fact that $C'$ is finite and maximal,
by using again Corollary~\ref{c:int-x-finite-irred-compo}.
Consider the configurations $y,y' \in X$ obtained from $\overline{x}$ by replacing $\pi_1$ by $\pi_0$ and $\pi'_0$ on $[0,\ell]$, respectively. We have $y \not= y'$ since $\pi_0 \not= \pi'_0$.
Since $y$ and $y'$ are homoclinic and satisfy $\tau(y) = \overline{x} = \tau(y')$,
we deduce that $\tau$ is not pre-injective.
\par
Since $\tau$ is an endomorphism of $X$ which is surjective but not pre-injective, we conclude that $X$ is not Moore.
\end{proof}

\begin{lemma}
\label{l:infinite-compo-not-moore-2}
Let $A$ be a finite set and let $X \subset A^{\Z}$ be a subshift of finite type.
Suppose that $X$ contains an infinite irreducible component which is extremal but not isolated.
Then $X$ is not Moore.
\end{lemma}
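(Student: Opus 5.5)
The plan is to use the surjective endomorphism $\tau^C$ from Section~\ref{sec:construction-tauC} and show that it is not pre-injective. We may assume $X$ is the vertex subshift of a finite directed graph $G=(V,E)$. Let $C$ be the infinite, extremal, non-isolated component. Reversing the orientation of $\Z$ (replace $x$ by $i\mapsto x_{-i}$) swaps minimal and maximal components and preserves both surjectivity and pre-injectivity. We may therefore assume that $C$ is minimal and not maximal. Then there is $C'\in\CC$ with $C\prec C'$.

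Let $K$ be the path-component associated with $C$, and let $n=n(K)$, $N=N(K,n)$ and $\ell=\ell(K,n)$ be as in Step~1 of that construction. The map $\tau=\tau^C$ is a surjective endomorphism (Steps 2--5). To contradict the Moore property we need two distinct homoclinic configurations $y\neq y'$ with $\tau(y)=\tau(y')$.

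\textbf{Construction of $y$ and $y'$.} Pick a vertex $v\in K$ with a path $v\,t_1\cdots t_q\,u$ leading out of $K$ into the path-component $K'$ associated with $C'$. By Lemma~\ref{l:2-cycles-de-long-n} there are two distinct cycles $\gamma\neq\gamma'$ of length $n$ based at $v$. Fix a left-infinite path $p$ in $K$ ending at $v$; for instance, the periodic left tail given by $\gamma^{\#}$ repeated. Set
\[
y \coloneqq p\,\gamma^{d\#}\,\cdots\,t_1\cdots t_q\,u\cdots,
\qquad
y' \coloneqq p\,\gamma'\gamma^{(d-1)\#}\,\cdots\,t_1\cdots t_q\,u\cdots,
\]
with the same right tail in $K'$. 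The block $\gamma^{d\#}$ of length $\ell=dn$ (take $t=1$) sits immediately before the exit, ending at $b$. By the example at the end of Remark~\ref{r:unique-admissible-pair}, $T(y)=(1,b-\ell)$, so $\tau$ collapses exactly that block. Applying Remark~\ref{r:unique-admissible-pair} to $y'$ requires the final length-$\ell$ window to be a power of a single cycle. So instead I would make the two configurations differ \emph{strictly before} the collapsed window but inside the collapsed region, or, more cleanly, differ only inside the collapsed window. Concretely, take $y'$ to agree with $y$ except that the last $\ell$ letters before $b$ read $\gamma'^{d\#}$ instead of $\gamma^{d\#}$. Both are admissible, and both have $T=(1,b-\ell)$ by that same example. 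Squeezing then deletes the window in each case and yields identical images: the left parts agree, and the right parts are both shifted by $\ell$. Since $\gamma\neq\gamma'$, we get $y\neq y'$, and $y,y'$ differ only on a finite window, so they are homoclinic. This shows $\tau$ is not pre-injective, so $X$ is not Moore.

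\textbf{Main obstacle.} The delicate step is verifying the canonical choice $T$ in Remark~\ref{r:unique-admissible-pair}. That remark needs $t=1$ to be achievable, which holds since $t\ge1$ is minimal. It also needs $i_1=N-\ell$ to be the right-lexicographic maximum among all $t=1$ admissible pairs, which holds because $i_1\le N-\ell$ always. So the example applies verbatim to both $y$ and $y'$. I also need to check that the tail of length $N+1$ before $b$ lies in $K$, which holds by the choice of the left tail $p$. Finally, the formula \eqref{e_def-tau-squeeze} with $t=1$ gives $\tau(x)_i=x_i$ for $i\le i_1$ and $x_{i+\ell}$ afterward. The images therefore coincide because $y_{(-\infty,b-\ell]}=y'_{(-\infty,b-\ell]}$ and $y_{[b,\infty)}=y'_{[b,\infty)}$, noting both windows start and end at $v$.
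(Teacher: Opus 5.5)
Your proof is correct and follows the paper's argument: the paper also applies the surjective endomorphism $\tau^C$ and exhibits two distinct homoclinic preimages. These agree except on the last $\ell$-window in $K$ before the exit, where they read $\pi^{(\ell/n)\#}$ and $\pi'^{(\ell/n)\#}$ for two distinct length-$n$ cycles from Lemma~\ref{l:2-cycles-de-long-n}, and both have $T=(1,\cdot)$ by the example in Remark~\ref{r:unique-admissible-pair}. Your reduction to the minimal case by reversing $\Z$ just makes explicit the paper's ``the maximal case is similar''.
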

\begin{proof}
Let $C$ denote an infinite irreducible component of $X$ which is extremal but not isolated.
Suppose that $C$ is minimal (the case when $C$ is maximal is similar).
Since $C$ is not isolated, we can find an irreducible component $C'$ of $X$ such that $C \prec C'$.
Consider the endomorphism $\tau = \tau^C$ defined in Section~\ref{sec:construction-tauC}.
In Step 5 in that section, when proving surjectivity of $\tau$, given a configuration $x \in X_{C,C'}$ we
defined a configuration $z \in X$ such that $\tau(z) = x$.
Keeping the same notation therein, we define another configuration $z'$ by setting, for all $i \in \Z$,
\[
z'_i \coloneqq \begin{cases} x_i & \mbox{ if } i \leq b\\
v'_j & \mbox{ if } i = b + j \bmod n, \ b \leq i \leq b + \ell\\
x_{i-\ell} & \mbox{ if } i \geq b + \ell,
\end{cases}
\]
where now $\pi' = v'_0v'_1 \cdots v'_n$ is a cycle of length $n$ based at $v'_0 = x_b$ and distinct from $\pi$
(recall that $C$ is infinite so that Lemma~\ref{l:2-cycles-de-long-n} applies).
It is clear that $z' \in X$, it is distinct from $z$ (since $\pi' \neq \pi$), it is homoclinic to $z$
($z$ and $z'$ agree on $\Z \setminus [b, b+\ell]$), $T(z') = (1,b) = T(z)$ and $\tau(z') = x = \tau(z)$.
We deduce that $\tau$ is not pre-injective.
\par
Since $\tau$ is an endomorphism of $X$ which is surjective but not pre-injective, we conclude that $X$ is not Moore.
\end{proof}

\begin{proof}[Proof of (b) $\implies$ (c) in Theorem~\ref{t:char-surj-sft}]
Suppose that $X$ contains infinite irreducible components which are not isolated.
If there exists one of these that is extremal, we deduce from Lemma~\ref{l:infinite-compo-not-moore-2} that $X$ is not Moore.
Otherwise, we can find two irreducible components $C, C'$ with $C$ infinite and $C'$ finite and minimal, such that $C' \prec C$.
It then follows from Lemma~\ref{l:infinite-compo-not-moore-1} that $X$ is not Moore either.
Suppose now that all infinite irreducible components of $X$ are isolated and that there is an irreducible component
which is not extremal. It then follows from Lemma~\ref{l:finite-compo-not-moore} that $X$ is not Moore.
This shows (b) $\implies$ (c).
\end{proof}

\begin{proof}[Proof of (c) $\iff$ (d) in Theorem~\ref{t:char-surj-sft}]
Let $P$ denote the perfect kernel of $X$.
Let $\CC$ denote the poset of irreducible components of $X$.
Denote by $\CC_\infty$ (resp.\ $\CC_{\infty,{\rm i}}$, resp.\ $\CC_{\rm f}$, resp.\ $\CC_{\rm f,i}$)
the set of irreducible components of $X$ that are infinite (resp.\ infinite and isolated, resp.\ finite, resp.\ finite and isolated).
\par
Suppose (c).
Thus, $\CC_\infty = \CC_{\infty, {\rm i}}$ and
$X \setminus (\bigcup_{C \in \CC_\infty} C) = \bigcup_{C_1, C_2 \in \CC_{\rm f} } X_{C_1,C_2}$.
We then deduce from Proposition~\ref{p:isolated-configuration}  that
the derived set of $X$  is the non-wandering subshift of finite type $X' = \bigcup_{C \in \CC_\infty \cup (\CC_{\rm f} \setminus \CC_{\rm f,i})} C$.
It follows that
$X^{(n)} = \bigcup_{C \in \CC_\infty} C$ for every $n \geq 2$.
Therefore, we have $P = \bigcup_{C \in \CC_\infty} C$ and $\CBrank(X) \leq 2$.
This shows that (c) implies (d).
\par
Conversely, suppose (d).
Let $C$ be an infinite irreducible component of $X$.
It follows from Proposition~\ref{p:isolated-configuration} that $C' = C$. Consequently, we have $C = C' \subset X'$ and,
recursively, $C \subset X^{(n)}$ for all $n \in \N$. We deduce that $C \subset P$.
Since, $P = \bigcup_{C \in \CC_{\infty,{\rm i}}} C$, we deduce that $C$ is isolated.
\par
It follows from Proposition~\ref{p:isolated-component} that the subset $Y \coloneqq X \setminus P$ is closed.
We thus have $X^{(n)} = P \sqcup Y^{(n)}$ for all $n \in \N$.
In particular, $P \sqcup Y^{(\CBrank(X))} = P^{(\CBrank(X))} \sqcup Y^{(\CBrank(X))} = X^{(\CBrank(X))} = P$,
which yields $Y^{(\CBrank(X))} = \varnothing$.
Since $\CBrank(X) \leq 2$, we thus have $Y'' = \varnothing$. As $Y$ is countable and compact, this implies that $Y'$ is finite.
As $Y'$ is $\sigma$-invariant, every configuration $y$ in $Y'$ is periodic, so that $C_\alpha(y) = C_\omega(y)$.
On the other hand, it follows from Proposition~\ref{p:isolated-configuration} that the set $Y \setminus Y'$ of isolated configurations in
$Y$ consists of all $y \in Y$ such that $C_\alpha(y)$ and $C_\omega(y)$ are both finite, with $C_\alpha(y)$ minimal and $C_\omega(y)$ maximal.
Therefore $Y$ is the disjoint union of the set of finite extremal components of $X$, and
of the subset consisting of all $y \in X$ such that $C_\alpha(y) \prec C_\omega(y)$ with $C_\alpha(y)$ and $C_\omega(y)$ both finite,
$C_\alpha(x)$ minimal, and $C_\omega(x)$ maximal.
Since isolated (finite or infinite) components are extremal, we deduce that every irreducible component of $X$ is extremal.
This shows that (d) implies (c).
\end{proof}

\begin{proof}[Proof of (d) $\iff$ (e) in Theorem~\ref{t:char-surj-sft}]
We keep the notation from the proof of (c) $\iff$ (d).
\par
Suppose (d).
It follows from Proposition~\ref{p:nw-vertex-ss}.(iii) that the perfect kernel $P = \bigcup_{C \in \CC_{\infty}}C$ of $X$
is non-wandering. Moreover, being a finite disjoint union of subshifts of finite type (Proposition~\ref{p:irred-comp}.(iv)),
$P$ is itself a subshift of finite type. Set $Y \coloneqq X \setminus P$. It follows from the Cantor-Bendixson decomposition theorem
that $Y$ is countable. As it is closed (by Proposition~\ref{p:isolated-component}) and shift-invariant, $Y$ is a subshift.
In fact, keeping in mind that all infinite irreducible components are isolated, we have that
$Y$ is the vertex subshift associated with $V \setminus \bigcup_{\substack{K \in \KK \\ C_K \in \CC_{\infty}}} K$,
so it is a subshift of finite type as well.
As  $\CBrank(X) \leq 2$,
we have $P = X'' =  (X')' = ((P \sqcup Y)')' = (P \sqcup Y')' = P \sqcup Y''$.
We deduce that $Y'' = \varnothing$, so that $\CBrank(Y) \leq 2$.
This shows that (d) implies (e).
\par
Conversely, assume (e).
Then $X = Z \sqcup Y$,   
where $Z \subset \NW(X)$ and $Y \coloneqq X \setminus Z$ are  subshifts of finite type with $Y$ countable and $\CBrank(Y) \leq 2$, respectively. 
Since $Y$ is countable with $\CBrank(Y) \leq 2$, we have $Y'' = \varnothing$.
On the other hand, for $C \in \CC$, we have $C' = C$ if $C \in \CC_{\infty}$ and $C' = \varnothing$, otherwise.
Consequently, $Z' = \bigcup_{\substack{C \in \CC \\ C \subset Z}} C' = \bigcup_{\substack{C \in \CC_{\infty}\\ C \subset Z}} C$
and therefore $Z'' = Z'$. We deduce that $X'' = Z'' \cup Y'' = Z''$, so that $\CBrank(X) \leq 2$, and $P = X'' = Z'' = Z'$.
We claim that $P = \bigcup_{C \in \CC_{\infty, {\rm i}}} C$. Indeed, suppose that there exists $C \in \CC_{\infty}$ which
is not isolated. Then there exists $C' \in \CC$ such that either $C' \prec C$ or $C \prec C'$.
Accordingly, either $X_{C',C}$ or $X_{C,C'}$ is contained in $X \setminus \NW(X) \subset Y$.
Since $C$ is infinite, the corresponding set is uncountable, contradicting the countability of $Y$.
Thus every irreducible component $C \in \CC_\infty$ is
isolated. Moreover, every irreducible component $C \in \CC_{\infty}$ is uncountable and since $Y$ is countable, one has $C \in Z$.
We deduce that $P = \bigcup_{\substack{C \in \CC_{\infty}\\ C \subset Z}} C = \bigcup_{\CC_{\infty, {\rm i}}} C$ and the claim follows.
\end{proof}

\subsection{Proof of Theorem~\ref{t:finite-extremal-components}}
\label{sec:finite-extremal-components}
We shall prove the equivalence of the three conditions stated in Theorem~\ref{t:finite-extremal-components} by successively showing
(a) $\iff$ (b) and  (b) $\iff$ (c).

\begin{proof}[Proof of (a) $\iff$ (b) in Theorem~\ref{t:finite-extremal-components}]
Suppose that  $X$ is injunctive.
By Proposition~\ref{p:not-injunctive-if-inf-irred}, every irreducible component of $X$ is finite.
As $X$ is Moore since it is injunctive, we then deduce from  Lemma~\ref{l:finite-compo-not-moore}  that every irreducible component of $X$ is  extremal.
This shows (a) $\implies$ (b).
\par
Conversely, suppose that $X$ satisfies (b), i.e., every irreducible component of $X$ is finite and extremal.
Let $\tau \colon X \to X$ be a surjective endomorphism of $X$.
We want to show that $\tau$ is injective.
Let $\CC$ denote the set of irreducible components of $X$.
  Up to replacing $\tau$ by $\tau^m$ for $m \coloneqq |\CC| !$, it follows from Proposition~\ref{p:surj-tau-implies-surj-rho} that we may assume that $\tau(C) = C$ for every $C \in \CC$.
  Let $x,y \in X$ such that $\tau(x) = \tau(y)$.
  If $x \in C$ for some $C \in \CC$, then $\tau(y) = \tau(x) \in C$ and hence $y \in C$.
  As $C$ is finite and $\tau$ is surjective, the restriction of $\tau$ to $C$ is injective, so that $x = y$.
Otherwise,  setting $C \coloneqq C_\alpha(x), C' \coloneqq C_\omega(x) \in \CC$, 
we have $C \prec C'$ and $x \in X_{C,C'}$.
  Observe that $\tau$ induces, by restriction, a surjective map from $X_{C,C'}$ onto itself
(cf.~Proposition~\ref{p:ca-sft-irred-compo}.(ii)).
As $\tau$ commutes with the shift, it  induces a map $\eta \colon \OO_{C,C'} \to \OO_{C,C'}$.
We claim that the map $\eta$ is also surjective.
Indeed, if  $O \in \OO_{C,C'}$ then $O = \OO(z)$ for some $z \in X_{C,C'}$.
If $z' \in X_{C,C'}$ is such that $z = \tau(z')$,  then $O = \eta(O')$ for $O' \coloneqq \OO(z')$.
This proves our claim.
As $\OO_{C,C'}$ is finite by Lemma~\ref{l:W-C-C'}, the map $\eta$ must be injective.
As $\tau(y) = \tau(x) \in X_{C,C'}$, we have $y \in X_{C,C'}$ and $\eta(\OO(y)) = \eta(\OO(x))$.  
Thus, we have $\OO(y) = \OO(x)$. 
We deduce that there exists $n \in \Z$ such that $y = \sigma^n(x)$.
This implies that $\tau(x) = \tau(y) = \tau(\sigma^n(x)) = \sigma^n(\tau(x))$. 
As $\tau(x) \in X_{C,C'}$ is wandering and therefore not periodic, we get $n = 0$, so that  $y = x$.
Therefore $\tau$ is injective. 
This shows that $X$ is injunctive and completes the proof of (a) $\iff$ (b).
\end{proof}

\begin{proof}[Proof of (b) $\iff$ (c) in Theorem~\ref{t:finite-extremal-components}]
Suppose (b).
Denoting by $\CC'$ the set of irreducible components of $X$ that are not isolated, 
we then deduce from Proposition~\ref{p:isolated-configuration} that
 $X$ has  derivatives
$X' = \bigcup_{C \in \CC'} C$ and $X'' = \varnothing$.
Therefore, $X$ is countable and $\CBrank(X) \leq 2$.
This shows that (b) implies (c).
\par
Conversely, suppose (c).
As $X$ is countable, we have $X^{(\CBrank(X))} = \varnothing$.
Since $\CBrank(X) \leq 2$,  we deduce that  $X'' = \varnothing$.
By compactness, this implies that $X'$ is  finite.
As $X'$ is $\sigma$-invariant,  every configuration in $X'$ is periodic.
On the other hand, it follows from Proposition~\ref{p:isolated-configuration} that  the set $X \setminus X'$ of isolated configurations in $X$ 
consists of all $x \in X$ such that $C_\alpha(x)$ and $C_\omega(x)$ are both finite, with $C_\alpha(x)$ minimal and $C_\omega(x)$ maximal.
Therefore, $X \setminus X'$ is the disjoint union of the set of finite extremal components of $X$ and
of the subset  $Y \subset X$ consisting of all $x \in X$ such that $C_\alpha(x) \prec C_\omega(x)$ with $C_\alpha(x)$ and $C_\omega(x)$ both finite, $C_\alpha(x)$ minimal, and $C_\omega(x)$ maximal.
We deduce that every irreducible component of $X$ is finite and extremal.
This shows that (c) implies (b).   
\end{proof}

\section{Examples}
\label{sec:examples}

It follows from our results 
that if $A$ is a finite set and $X \subset A^{\Z}$ is a subshift of finite type then $X$ is of one of the following types:
\begin{enumerate}[Type 1:]
\item
$X$ is surjunctive,  injunctive, Moore, and  Myhill; 
\item
$X$ is surjunctive, injunctive, and Moore, but not Myhill; 
\item
$X$ is surjunctive, Moore, and Myhill, but not injunctive;
\item
$X$ is surjunctive and Moore, but neither injunctive nor Myhill;
\item
$X$ is neither surjunctive, nor injunctive, nor Moore, nor Myhill.
\end{enumerate}
The examples of subshifts of finite type below show that each of these five types can occur.

\begin{example}[Type 1]
\label{ex:type-1}
Let $A \coloneqq \{0\}$ and let $X \coloneqq A^{\Z} = \{0^{\Z}\}$ denote the full shift over $A$.
Observe that $X$ is a subshift of finite type admitting $\varnothing$ as a defining set of forbidden words. 
The identity map $\Id_X$ is the unique  endomorphism of $X$.
As $\Id_X$ is bijective,
$X$ is of Type~1.
Note that $X$ is irreducible.
The perfect kernel of $X$ is the empty set and we have $\CBrank(X) = 1$. 
\end{example}

\begin{example}[Type 1]
\label{ex:type-1b}
Let $A \coloneqq \{0,1\}$ and consider the subshift  $X \coloneqq \{0^{\Z},1^{\Z}\} \subset A^{\Z}$ consisting of the two periodic  configurations of minimal period $2$.
Observe that $X$ is a subshift of finite type admitting $\{00,11\}$ as a defining set of forbidden words.
On the other hand, the endomorphisms of $X$ are its two permutations.
As every endomorphism of $X$ is bijective,
 $X$ is surjunctive, injunctive, Moore, and Myhill.
Note that $X$ is irreducible. 
The perfect kernel of $X$ is the empty set and we have $\CBrank(X) = 1$. 
 \end{example} 

\begin{example}[Type 2]
\label{ex:type-2}
Let $A \coloneqq \{0,1\}$ and consider the subshift  $X \coloneqq \{0^{\Z},1^{\Z}\} \subset A^{\Z}$ consisting of the two constant configurations.
Observe that $X$ is a subshift of finite type admitting $\{01,10\}$ as a defining set of forbidden words.
As $X$ is finite, it is surjunctive, injunctive, and it  has the Moore property.
On the other hand, the endomorphism  $\tau \colon X \to X$ defined by $\tau(0^{\Z}) = \tau(1^{\Z}) = 0^{\Z}$ is pre-injective but not surjective.
Consequently, $X$ does not have the Myhill property.
Note that $X$ is non-wandering. 
Its irreducible components are $\{0^{\Z}\}$ and $\{1^{\Z}\}$,
and they  are both isolated.
The perfect kernel of $X$ is the empty set and we have $\CBrank(X) = 1$. 
 \end{example} 

\begin{remark}
The subshifts described in the two previous examples have homeomorphic underlying topological spaces.
As one of these subshifts is Myhill while the other is not, this shows that the Myhill property is not a topological invariant for subshifts of finite type.
\end{remark} 
\begin{example}[Type 3]
\label{ex:type-3}
Let $A \coloneqq \{0,1\}$  and let $X \coloneqq  A^{\Z}$ denote the full shift over $A$.
Observe that $X$ is a subshift of finite type admitting $\varnothing$ as a defining set of forbidden words. 
By the original Moore-Myhill Garden of Eden theorem~\cite{moore}, \cite{myhill},
$X$  is both Moore and Myhill.
As $X$ is Myhill, it is surjunctive.
However, $X$ is not injunctive.
This follows from Theorem~\ref{t:finite-extremal-components} since  $X$ is infinite and irreducible.
Actually, an explicit surjective but not injective endomorphism of $X$ is provided by
the map $\tau \colon X \to X$ given by $\tau(x)_i \coloneqq x_i + x_{i + 1} \mod 2$ for all $x \in X$ and $i \in \Z$.
Consequently, $X$ is of Type~3.
Note that $X$ is irreducible.
The perfect kernel of $X$ is $X$ itself and we have $\CBrank(X) = 0$. 
  \end{example}

\begin{example}[Type 4]
\label{ex:type-4}
Let $A \coloneqq \{0,1,2\}$ and consider the subshift $X \coloneqq \{0,1\}^{\Z} \cup \{2^{\Z}\}$.
 Observe that $X$ is a subshift of finite type admitting $\{02,20,12,21\}$ as a defining set of forbidden words.
 It has two irreducible components, namely $\{0,1\}^{\Z}$ and $\{2^{\Z}\}$.
 These two irreducible components are isolated.
We thus deduce from Theorem~\ref{t:char-surj-sft} that $X$ is surjunctive and has the Moore property.
On the other hand, it follows from Theorem~\ref{t:finite-extremal-components} that $X$ is not injunctive.
Actually, an explicit surjective but not injective endomorphism of $X$ is provided by
the map $\tau \colon X \to X$ given by $\tau(x)_i \coloneqq x_i + x_{i + 1} \mod 2$ for all $x \in \{0,1\}^{\Z}$ and $i \in \Z$, and $\tau(2^{\Z}) = 2^{\Z}$.
Moreover, $X$ is not Myhill since the map $\tau \colon X \to X$ given by
$\tau(x) \coloneqq x$ for all $x \in \{0,1\}^{\Z}$ and $\tau(2^{\Z}) \coloneqq 0^{\Z}$ is a pre-injective endomorphism of $X$ which is not surjective.
Consequently, $X$ is of Type~4.
The perfect kernel of $X$ is $\{0,1\}^{\Z}$ and we have $\CBrank(X) = 1$.
\end{example}

\begin{example}[Type 5]
\label{ex:type-5}
Let $A \coloneqq \{0,1,2\}$ and consider the subshift of finite type $X \subset A^{\Z}$ admitting
$\{10,20,21\}$ as a defining set of forbidden words.
 Observe that the word $12$ can appear at most once in a given configuration of $X$.
The map $\tau \colon X \to X$ which replaces the word $12$  by the word $11$ is an injective endomorphism that is not surjective.
Therefore, $X$ is not surjunctive. In particular, it does not have the Myhill property.
On the other hand, the map $\tau' \colon X \to X$ which replaces the word $112$ by the word $122$ is a surjective endomorphism which is not pre-injective
(the configurations $\dots00.1222\dots$ and $\dots00.1122\dots$ are homoclinic and have the same image under $\tau'$).  
Thus, $X$  does not have the Moore property. It follows that  $X$ is not injunctive either.
Consequently, $X$ is of Type~5.
Note that $X$ fails to be non-wandering.
It has three irreducible components, namely, $\{0^{\Z}\}$, $\{1^{\Z}\}$, and $\{2^{\Z}\}$, and they satisfy  
$\{0^{\Z}\} \prec  \{1^{\Z}\} \prec \{2^{\Z}\}$.
The perfect kernel of $X$ is the empty set  and we have $\CBrank(X) = 3$.
\end{example}

Here is another example of a subshift of finite type of Type 5.
Contrary to the previous example, this one is uncountable.

\begin{example}[Type 5]
\label{ex:type-5-bis}
Let $A \coloneqq \{0,1,2\}$ and consider the subshift of finite type $X \subset A^{\Z}$ admitting
$\{02,20,21\}$ as a defining set of forbidden words.
 Observe that the word $12$ can appear at most once in a given configuration $x \in X$.
The map $\tau \colon X \to X$ which replaces the word $12$  by the word $11$ is an injective endomorphism that is not surjective
(the word $012$ appears in $X$ but not in $\tau(X)$).
Therefore, $X$ is not surjunctive. In particular, it does not have the Myhill property.
On the other hand, the map $\tau' \colon X \to X$ which replaces the word $112$ by the word $122$ is a surjective endomorphism which is not pre-injective
(the configurations $\dots00.1222\dots$ and $\dots00.1122\dots$ are homoclinic and have the same image under $\tau'$).  
Thus, $X$  does not have the Moore property. It follows that  $X$ is not injunctive either.
Consequently, $X$ is of Type~5.
Note that $X$ fails to be non-wandering.
It has two irreducible components, namely, $\{0,1\}^{\Z}$ and  $\{2^{\Z}\}$, and they satisfy  
$\{0,1\}^{\Z}  \prec \{2^{\Z}\}$.
The perfect kernel of $X$ is $X$ itself  and we have $\CBrank(X) = 0$.
\end{example}

\begin{remark}
The subshift  described in the previous example and the full shift $\{0,1\}^{\Z}$ of Example~\ref{ex:type-3} have  underlying topological spaces 
that are homeomorphic (they are both non-empty perfect compact metrizable spaces).
We deduce that surjunctivity (resp.~Moore) is not a topological invariant for subshifts of finite type.
\end{remark}

We describe now an example  showing that injunctivity is not a topological invariant for sofic subshifts.

\begin{example}
\label{ex:sofic-injunctive}
Let $A \coloneqq \{0,1,2,3,4,5\}$.
For each $k \in \{0,1,2\}$, let $x^{(k)} \in A^{\Z}$ denote the configuration defined by $x^{(k)}_0 \coloneqq 2k + 1$ and $x^{(k)}_i \coloneqq 2k$ for all $i \in \Z \setminus \{0\}$.  
Consider the  subshift $Y_k \subset A^{\Z}$
given by $Y_k \coloneqq \{(2k)^{\Z}\} \cup \OO(x^{(k)})$.
Each $Y_k$ is topologically conjugate to the sunny-side-up subshift and hence is sofic~\cite[Exercise~1.90]{csc-ecag}.
Consequently, the subshift $Z \coloneqq Y_0 \sqcup Y_1 \sqcup Y_2$ is also sofic since it is a finite union of sofic subshifts \cite[Exercise~1.101]{csc-ecag}.
One  checks that $Z$ is injunctive
(a surjective endomorphism of $Z$ is also injective since it must permute the three constant configurations and the orbits of the $x^{(k)}$s).
On the other hand, we have $Z' = \{0^{\Z},2^{\Z},4^{\Z}\}$,
so that the underlying topological space of $Z$ is homeomorphic to the underlying topological space of the subshift of finite type $X$ of Example~\ref{ex:type-5}
since they  have the same Mazurkiewicz-Sierpinski characteristic $(1,3)$.
As $Z$ is surjunctive while $X$ is not, this shows that injunctivity is not a topological invariant for sofic subshifts.
\end{example}

\section{Open questions}

It is shown in \cite{csc-goe-smale} that non-wandering Smale systems are surjunctive.  
The characterization of surjunctivity remains open for general Smale systems. 

\begin{question}
Is every  surjunctive Smale system  topologically conjugate to the disjoint union of a non-wandering Smale system and a countable subshift of finite type of Cantor-Bendixson rank at most $2$?
\end{question}

We note that a positive  answer to this question  would imply the well-known  conjecture that every Anosov diffeomorphism on a compact manifold is non-wandering (see, e.g.,~\cite{brin-nonwandering}). 
Indeed, every self-embedding of a compact manifold is surjective (by Brouwer's invariance of domain), 
and a non-empty open subset of a manifold cannot be countable.

Similarly, one may ask for the characterization of the Moore, Myhill and injunctivity properties of general Smale systems. 
It is reasonable to conjecture that general Smale systems satisfy similar properties as
subshifts of finite type, that the Myhill property still implies the Moore property,
and that the Moore property is equivalent to surjunctivity.

For subshifts of finite type, the Myhill property remains without a clean characterization.

\begin{problem}
Give a characterization of the Myhill property for $\Z$-subshifts of finite type.
\end{problem}

As shown by Fiorenzi, the even subshift is Myhill (and therefore surjunctive) but not Moore \cite{fiorenzi-sofic}, which already shows that sofic subshifts behave quite differently from subshifts of finite type.

\begin{question}
Which sofic subshifts are surjunctive (resp.~injunctive, resp.~Moore, resp.~Myhill)?
\end{question}

It is possible to construct compact dynamical $\Z$-systems that are injunctive but not surjunctive. 
For subshifts, we do not know any such examples.

\begin{question}
Does there exist a  subshift with finite alphabet over $\Z$ which is injunctive but not surjunctive?
\end{question}

\bibliographystyle{siam}

\begin{thebibliography}{10}

\bibitem{auslander-endomorphisms}
{\sc J.~Auslander}, {\em Endomorphisms of minimal sets}, Duke Math. J., 30
  (1963), pp.~605--614.

\bibitem{ax-elementary}
{\sc J.~Ax}, {\em The elementary theory of finite fields}, Ann. of Math. (2),
  88 (1968), pp.~239--271.

\bibitem{boyle-lind-rudolph}
{\sc M.~Boyle, D.~Lind, and D.~Rudolph}, {\em The automorphism group of a shift
  of finite type}, Trans. Amer. Math. Soc., 306 (1988), pp.~71--114.

\bibitem{brin-stuck}
{\sc M.~Brin and G.~Stuck}, {\em Introduction to dynamical systems}, Cambridge
  University Press, Cambridge, 2002.

\bibitem{brin-nonwandering}
{\sc M.~I. Brin}, {\em Nonwandering points of {A}nosov diffeomorphisms}, in
  Dynamical systems, {V}ol. {I}---{W}arsaw, vol.~No. 49 of Ast\'erisque, Soc.
  Math. France, Paris, 1977, pp.~11--18.

\bibitem{csc-cat}
{\sc T.~Ceccherini-Silberstein and M.~Coornaert}, {\em Surjunctivity and
  reversibility of cellular automata over concrete categories}, in Trends in
  harmonic analysis, vol.~3 of Springer INdAM Ser., Springer, Milan, 2013,
  pp.~91--133.

\bibitem{csc-cag2}
\leavevmode\vrule height 2pt depth -1.6pt width 23pt, {\em Cellular automata
  and groups}, Springer Monographs in Mathematics, Springer, Cham, second~ed.,
  [2023] \copyright 2023.

\bibitem{csc-ecag}
\leavevmode\vrule height 2pt depth -1.6pt width 23pt, {\em Exercises in
  cellular automata and groups}, Springer Monographs in Mathematics, Springer,
  Cham, [2023] \copyright 2023.
\newblock With a foreword by Rostislav I. Grigorchuk.

\bibitem{csc-goe-smale}
\leavevmode\vrule height 2pt depth -1.6pt width 23pt, {\em A garden of {E}den
  theorem for {S}male spaces}, Mosc. Math. J., 25 (2025), pp.~479--494.

\bibitem{downarowicz}
{\sc T.~Downarowicz}, {\em The royal couple conceals their mutual relationship:
  a noncoalescent {T}oeplitz flow}, Israel J. Math., 97 (1997), pp.~239--251.

\bibitem{fiorenzi-sofic}
{\sc F.~Fiorenzi}, {\em The {G}arden of {E}den theorem for sofic shifts}, Pure
  Math. Appl., 11 (2000), pp.~471--484.

\bibitem{gottschalk}
{\sc W.~Gottschalk}, {\em Some general dynamical notions}, in Recent advances
  in topological dynamics ({P}roc. {C}onf. {T}opological {D}ynamics, {Y}ale
  {U}niv., {N}ew {H}aven, {C}onn., 1972; in honor of {G}ustav {A}rnold
  {H}edlund), Springer, Berlin, 1973, pp.~120--125. Lecture Notes in Math.,
  Vol. 318.

\bibitem{gromov-esav}
{\sc M.~Gromov}, {\em Endomorphisms of symbolic algebraic varieties}, J. Eur.
  Math. Soc. (JEMS), 1 (1999), pp.~109--197.

\bibitem{hedlund}
{\sc G.~A. Hedlund}, {\em Endomorphisms and automorphisms of the shift
  dynamical system}, Math. Systems Theory, 3 (1969), pp.~320--375.

\bibitem{kechris}
{\sc A.~S. Kechris}, {\em Classical descriptive set theory}, vol.~156 of
  Graduate Texts in Mathematics, Springer-Verlag, New York, 1995.

\bibitem{kim-roush}
{\sc K.~H. Kim and F.~W. Roush}, {\em On the automorphism groups of subshifts},
  Pure Math. Appl. Ser. B, 1 (1990), pp.~203--230.

\bibitem{kitchens}
{\sc B.~P. Kitchens}, {\em Symbolic dynamics}, Universitext, Springer-Verlag,
  Berlin, 1998.
\newblock One-sided, two-sided and countable state Markov shifts.

\bibitem{lind-marcus-second}
{\sc D.~Lind and B.~Marcus}, {\em An introduction to symbolic dynamics and
  coding}, Cambridge Mathematical Library, Cambridge University Press,
  Cambridge, second~ed., 2021.

\bibitem{mazurkiewicz-sierpinski}
{\sc S.~Mazurkiewicz and W.~Sierpi{\'n}ski}, {\em A contribution to the
  topology of countable sets.}, Fundam. Math., 1 (1920), pp.~17--27.

\bibitem{moore}
{\sc E.~F. Moore}, {\em Machine models of self-reproduction}, vol.~14 of Proc.
  Symp. Appl. Math., American Mathematical Society, Providence, 1963,
  pp.~17--34.

\bibitem{myhill}
{\sc J.~Myhill}, {\em The converse of {M}oore's {G}arden-of-{E}den theorem},
  Proc. Amer. Math. Soc., 14 (1963), pp.~685--686.

\bibitem{salo-torma_2015}
{\sc V.~Salo and I.~T\"orm\"a}, {\em Category theory of symbolic dynamics},
  Theoretical Computer Science, 567 (2015), pp.~21--45.

\bibitem{walters-intro-book}
{\sc P.~Walters}, {\em An introduction to ergodic theory}, vol.~79 of Graduate
  Texts in Mathematics, Springer-Verlag, New York-Berlin, 1982.

\bibitem{weiss-sgds}
{\sc B.~Weiss}, {\em Sofic groups and dynamical systems}, Sankhy\=a Ser. A, 62
  (2000), pp.~350--359.
\newblock Ergodic theory and harmonic analysis (Mumbai, 1999).

\end{thebibliography}

\end{document}